\newtheorem{proposition}{Proposition}[section]
\newtheorem{theorem}[proposition]{Theorem}
\newtheorem{corollary}[proposition]{Corollary}
\newtheorem{lemma}[proposition]{Lemma}
\newtheorem{thmx}{Theorem}
\newtheorem{thmxprime}{Theorem}
\newtheorem*{thm:distant-knots-cobordisms}{Theorem~\ref{thm:distant-knots-cobordisms}}
\theoremstyle{definition}
\newtheorem{definition}[proposition]{Definition}
\theoremstyle{remark}
\newtheorem{remark}[proposition]{Remark}
\newtheorem*{remark*}{Remark}
\newtheorem*{ackn}{Acknowledgements}
\newtheorem{const}[proposition]{Construction}
\newcommand{\Diff}{\text{Diff}}
\newcommand{\lk}{\operatorname{lk}}
\newcommand{\bdry}{\partial}
\renewcommand{\top}{\text{top}}
\newcommand{\Q}{\mathbb{Q}}
\newcommand{\N}{\mathbb{N}}
\newcommand{\CC}{\mathbb{C}}
\newcommand{\Z}{\mathbb{Z}}
\newcommand{\lnk}{\operatorname{lk}}
\newcommand{\Id}{\operatorname{Id}}
\newcommand{\SO}{\operatorname{SO}}
\newcommand{\into}{\hookrightarrow}
\begin{document}
\title{Concordance of knots in $S^1\times S^2$}

\author{Christopher W.\ Davis}
\address{Department of Mathematics, University of Wisconsin--Eau Claire}
\email{daviscw@uwec.edu}
\urladdr{people.uwec.edu/daviscw}

\author{Matthias Nagel}
\address{Department of Mathematics \& Statistics McMaster University, Canada}
\email{nagel@cirget.ca}
\urladdr{http://nagel.io/}

\author{JungHwan Park}
\address{Max-Planck-Institut f\"{u}r Mathematik}
\email{jp35@mpim-bonn.mpg.de }
\urladdr{http://people.mpim-bonn.mpg.de/jp35}

\author{Arunima Ray}
\address{Max-Planck-Institut f\"{u}r Mathematik}
\email{aruray@mpim-bonn.mpg.de }
\urladdr{}

\date{\today}

\subjclass[2010]{57M27}

\begin{abstract}
We establish a number of results about smooth and topological concordance of knots in $S^1\times S^2$. The \emph{winding number} of a knot in $S^1\times S^2$ is defined to be its class in $H_1(S^1\times S^2;\Z)\cong \Z$. We show that there is a unique smooth concordance class of knots with winding number one. This improves the corresponding result of Friedl--Nagel--Orson--Powell in the topological category. We say a knot in $S^1\times S^2$ is slice (resp.\ topologically slice) if it bounds a smooth (resp.\ locally flat) disk in $D^2\times S^2$. We show that there are infinitely many topological concordance classes of non-slice knots, and moreover, for any winding number other than $\pm 1$, there are infinitely many topological concordance classes even within the collection of slice knots. Additionally we demonstrate the distinction between the smooth and topological categories by constructing infinite families of slice knots that are topologically but not smoothly concordant, as well as non-slice knots that are topologically slice and topologically concordant, but not smoothly concordant.
\end{abstract}

\maketitle
\section{Introduction}
A knot $K$ in a closed, oriented $3$-manifold $Y$ is an oriented, smooth embedding $K\colon S^1\hookrightarrow Y$. As is customary, often we will conflate a knot and its image. Two knots in $Y$ are said to be \emph{concordant (resp.\ topologically concordant)} if they cobound a smooth (resp.\ locally flat) annulus in $Y\times I$ (see Section~\ref{sec:prelims} for more precise definitions). Note that a smooth annulus is locally flat and thus, concordant knots are topologically concordant. Topological concordance of knots in general $3$-manifolds was studied in~\cite{FNOP16}. In this paper we focus on concordance and topological concordance of knots in $S^1\times S^2$. Fix an orientation of $S^1\times S^2$.  Then any knot $K$ in $S^1\times S^2$ corresponds to an element of the set of oriented free homotopy classes of loops in $S^1\times S^2$. The latter is identified with $H_1(S^1\times S^2;\Z)\cong \Z$, and thus, any knot $K$ corresponds to a well-defined integer, called the \emph{winding number of~$K$}, denoted~$w(K)$. Winding number is preserved under concordance in either category, and thus, we are interested in concordance classes of knots with a fixed winding number. Let~$H$ denote the knot given by $S^1\times \{pt\}\subseteq S^1\times S^2$ with winding number one; we call $H$ the \emph{Hopf knot}.

In~\cite[Theorem 1.4 \& 1.6]{FNOP16}, it was shown that there are infinitely many topological concordance classes of knots in $S^1\times S^2$ with winding number $w= 0, \pm 2$. In contrast, they showed that any winding number $1$ knot is topologically concordant to the Hopf knot (and consequently, any winding number $-1$ knot is topologically concordant to the reverse of $H$); this is called the \emph{topological concordance lightbulb theorem}. We expand on these results to show that there are infinitely many topological concordance classes of knots in $S^1\times S^2$ with any winding number other than $\pm 1$. For this outstanding case, we promote the result from~\cite{FNOP16} to the smooth category, as follows.

\begin{thmx}[Smooth concordance lightbulb theorem]\label{thm:thmA}
Any winding number $1$ knot in $S^1\times S^2$ is smoothly concordant to the Hopf knot $H$.
Consequently any winding number $-1$ knot is smoothly concordant to the reverse of the Hopf knot.
\end{thmx}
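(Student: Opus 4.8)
The plan is to upgrade the topological argument of Friedl--Nagel--Orson--Powell to the smooth category by replacing their use of topological embedding theory with an explicit surgery-theoretic isotopy. Given a winding number $1$ knot $K \subseteq S^1 \times S^2$, the first step is to realize $K$ as a knot in a surgered manifold: $S^1 \times S^2$ is obtained by $0$-surgery on an unknot $U$ in $S^3$, so $K$ can be regarded as a knot in the complement $S^3 \setminus \nu U$ together with the surgery data. Because $w(K) = 1$, the knot $K$ meets a meridian disk of the surgery solid torus algebraically once. The heart of the matter is to show $K$ is \emph{isotopic} in $S^1 \times S^2$ to a knot $K'$ that meets such a disk \emph{geometrically} once; once this is achieved, $K'$ is a section of the $S^2$-bundle projection $S^1 \times S^2 \to S^1$ over a sub-arc, and a standard light bulb trick type argument isotopes the remaining arc, showing $K'$ is isotopic to $H$ after passing through the concordance (not isotopy).

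The key steps, in order, are as follows. First, arrange $K$ to be transverse to the family of spheres $\{pt\} \times S^2$; the winding number hypothesis says the signed count of intersection points with a generic sphere is $1$, so there are $k$ positive and $k-1$ negative points for some $k \geq 1$. Second, use the fact that $\pi_1(S^1 \times S^2) \cong \Z$ is abelian and that $\pi_2(S^1 \times S^2) \cong \Z$ is generated by a sphere fiber to cancel each oppositely-signed pair of intersection points: a Whitney-type move in dimension three, i.e.\ a finger move / tube along an arc, can be performed because the relevant Whitney disk obstruction lives in $\pi_2$ of the fiber and can be realized by a smooth isotopy after possibly spinning around the $S^1$ direction. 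Crucially, this is done at the level of \emph{concordance}: rather than an ambient isotopy, one builds an embedded annulus in $(S^1 \times S^2) \times I$ that at each level resolves one crossing of $K$ with the sphere family, so that at the top level the knot hits each fiber exactly once. Third, a knot meeting every fiber $\{pt\} \times S^2$ exactly once is, up to isotopy, the graph of a map $S^1 \to S^2$; since $\pi_1(S^2) = 0$ such a graph is isotopic to the constant section $H$ by a straight-line homotopy in the fiber, which lifts to an isotopy of knots. Concatenating the concordance from step two with this isotopy (which is in particular a concordance) gives a smooth concordance from $K$ to $H$.

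I expect the \textbf{main obstacle} to be step two: showing that the self-intersections with the sphere family can be removed \emph{smoothly}, at the cost of a concordance. In the topological category FNOP invoke Freedman--Quinn embedding machinery or a topological lightbulb argument where immersed Whitney disks suffice; smoothly one cannot embed an arbitrary Whitney disk. The resolution is to exploit that we do not need an ambient isotopy of $S^1 \times S^2$ — only an annulus in $(S^1 \times S^2) \times I$ — so one can trade an immersed Whitney disk for an \emph{embedded} one living one dimension up, absorbing intersections into the $I$-direction. Making this precise likely requires a careful handle or Morse-theoretic description of the concordance annulus, and checking that the framings and the winding number bookkeeping work out so that no new obstruction appears. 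The remaining steps (one and three) are essentially standard transversality and the classical light bulb trick, respectively, and should be routine.
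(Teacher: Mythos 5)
Your overall strategy---reduce the geometric intersection number of $K$ with a fiber sphere to one, then apply the classical light bulb trick---is the right intuition, and your steps one and three are fine. But step two, which you correctly identify as the main obstacle, has a genuine gap, and the proposed resolution does not close it. Freeing a returning arc so that it can be pushed back through the fiber sphere requires changing crossings of $K$ with itself; a self-crossing change is a homotopy, not a concordance, and the passage from ``homotopic to $H$'' to ``concordant to $H$'' is precisely the content of the theorem. The suggested fix---trading an immersed Whitney disk for an embedded one ``one dimension up,'' absorbing intersections into the $I$-direction---is exactly the kind of move that works for topological surgery/embedding theory but is obstructed smoothly, and no mechanism is given for why it succeeds here. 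Worse, the argument as stated makes no essential use of $w=1$: it would equally cancel the $k$ positive and $k$ negative fiber intersections of a winding number $0$ knot, yielding a concordance to a distant knot. The paper exhibits a winding number $0$ knot that is \emph{not} concordant to any distant knot (detected by linking numbers of the lifts to the universal cover $\mathbb{R}\times S^2$), so the cancellation mechanism, as described, proves a false statement.

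For contrast, the paper's proof avoids working directly in $(S^1\times S^2)\times I$ at first. It builds an explicit cobordism $W$ from a relative Kirby diagram in which the $0$-framed surgery curve defining $S^1\times S^2$ serves as a helper circle: the needed crossing changes are realized as handle slides over this curve, which change the diagram but not the $4$-manifold, and the winding number $1$ hypothesis enters when the resulting $1$- and $2$-handles cancel algebraically and geometrically, showing $W\cong S^1\times S^2\times I$ rel its negative boundary. Even then one is not done: the identification of the positive boundary with $S^1\times S^2$ is only controlled up to a self-diffeomorphism $g$, and the proof invokes Gluck's computation $\pi_0\Diff(S^1\times S^2)\cong(\Z/2)^3$ to see that $g$, being orientation preserving and acting trivially on $\pi_1$, is isotopic to the identity or the Gluck twist, both of which preserve the Hopf knot. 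Your outline omits this boundary-identification issue entirely; it would resurface in any repair of step two that constructs the concordance in an abstract product cobordism rather than in $(S^1\times S^2)\times I$ itself.
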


\begin{figure}[htb]
\begin{tikzpicture}
\node at (0,0) {\includegraphics[height=3cm]{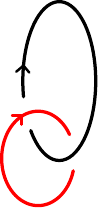}};
\node at (-1,-1) {$H$};
\node at (0.9,1) {$0$};
\end{tikzpicture}
\caption{The knot~$S^1\times\{p\} \subset S^1\times S^2$ with winding number $1$ is the \emph{Hopf knot}~$H$. Note that any knot with winding number~$1$ is homotopic to the Hopf knot. We will use the term `Hopf knot' to refer to any knot ambiently isotopic to $H$.}\label{fig:Hopf-knot}
\end{figure}

In fact, our proof applies to a broader notion of concordance, called \emph{$\Q$-concordance}, defined in Section~\ref{sec:prelims}, as follows.

\begin{thmxprime}\label{thm:thmAprime}
Any knot in $S^1\times S^2$ with nonzero winding number is $\Q$-concordant to $H$.
\end{thmxprime}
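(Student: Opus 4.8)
The plan is to reduce to the case of winding number one, which is Theorem~\ref{thm:thmA}. Since $\Q$-concordance is an equivalence relation, it suffices to show that every knot $K\subset S^1\times S^2$ with $w(K)=w\neq 0$ is $\Q$-concordant to \emph{some} knot $K'$ with $w(K')=1$; then $K'$ is smoothly concordant, hence $\Q$-concordant, to $H$ by Theorem~\ref{thm:thmA}, and we conclude.

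The homological input that makes such a reduction possible is that a $\Q$-homology cobordism of $S^1\times S^2$ is required only to induce isomorphisms on rational homology, so the image of the generator of $H_1$ may be rescaled by an integer; correspondingly the winding number of a knot carried along an annulus in such a cobordism can change by a nonzero multiplicative factor, even though winding number is an honest invariant of ordinary concordance. Moreover, because $w\neq 0$, the meridian $\mu_K$ is rationally nullhomologous in $M_K:=(S^1\times S^2)\setminus \nu(K)$: a $2$-sphere dual to $H$ meets $K$ algebraically $w$ times, and deleting disk neighborhoods of the intersection points turns it into a surface in $M_K$ whose boundary represents $w[\mu_K]$, whence $w[\mu_K]=0$ in $H_1(M_K)$. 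Thus $M_K$ is a rational homology solid torus, just like $M_H=S^1\times D^2$. To build $K'$, take $K'$ to be obtained from $K$ by band summing with a suitable number of disjoint parallel copies of $H$, oriented so that $w(K')=1$; the band moves themselves give a genus-zero cobordism in $(S^1\times S^2)\times I$ from the split union of $K$ with these copies to $K'$, and the remaining task is to ``absorb'' the extra copies of $H$ --- each of which is $\Q$-slice, bounding $D^2\times\{pt\}$ in $D^2\times S^2$ --- into the boundary, inside a cobordism that stays a rational homology cobordism and whose far end is again $S^1\times S^2$.

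The heart of the matter, and the step I expect to be the main obstacle, is exactly this construction: one must produce a smooth $\Q$-homology cobordism from $S^1\times S^2$ to itself carrying $K$ to $K'$ by an annulus, while simultaneously verifying (i) that the far boundary is genuinely $S^1\times S^2$ and not merely a rational homology $S^1\times S^2$, (ii) that the cobordism has the rational homology of $(S^1\times S^2)\times I$, and (iii) that the winding number has really dropped to $1$. I would carry this out with an explicit handle description --- presenting $S^1\times S^2$ as $0$-surgery on an unknot $U\subset S^3$ and working with the two-component link $K\cup U$, so that the rescaling of the winding number becomes a controlled sequence of Kirby-type moves on this picture --- together with a linking form computation to confirm (i) and (ii); the observation above, that $M_K$ is a rational homology solid torus whenever $w\neq 0$, is what keeps the cobordism a rational homology cobordism. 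Once the reduction to winding number $1$ is in hand, Theorem~\ref{thm:thmA} finishes the proof.
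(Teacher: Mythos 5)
Your reduction leaves precisely the hard step unproved, and the plan you sketch for that step cannot work as stated. Suppose $W$ is a $\Q$-homology cobordism from $S^1\times S^2$ to itself in which $K$ (winding number $w$) and a winding number $1$ knot $K'$ cobound an annulus. Writing $\iota_\pm$ for the inclusion-induced isomorphisms on $H_1(\,\cdot\,;\Q)$, the annulus forces $w\,\iota_-[H]=\iota_+[H']$, where $H$ and $H'$ denote the Hopf knots in the two boundary components. In particular $\iota_-[H]\neq 0$, so the parallel copies of $H$ (or $rH$) that you band onto $K$ to form $K'$ represent nonzero classes in $H_1(W;\Q)$ and therefore cannot bound disks---or any surfaces---in $W$. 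Hence the extra Hopf-knot boundary components of your planar cobordism can never be ``absorbed'' by capping, in any $4$-manifold satisfying the homological constraints you need; the fact that $H$ bounds $D^2\times\{pt\}$ in $D^2\times S^2$ does not help, since no such capping region can sit inside a $\Q$-homology cobordism of the required type. The change of winding number must be carried by the homology of the cobordism itself, not by killing copies of $H$.

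The paper's proof does exactly this, and is shorter than your reduction: starting from $S^1\times S^2\times I$ one attaches a single $1$-handle and a single $2$-handle, the $2$-handle running over the $1$-handle following the pattern of $K$ (Construction~\ref{const:bordism}); the product annulus $H\times I$ then joins $H$ in the bottom boundary to a knot identified with $K$ in the top boundary (Lemma~\ref{lem:modify-knot}), and the relative chain complex of the pair is $R\xrightarrow{\times w}R$, which is acyclic whenever $w$ is a unit in $R$ (Proposition~\ref{prop:concordance-to-modified-knot}). Note also that your logical order is inverted relative to the paper's: Theorem~\ref{thm:thmAprime} is the easy consequence of this construction, while Theorem~\ref{thm:thmA} is the substantially harder statement obtained by further showing that the cobordism is a product when $w=1$ (via helper-circle slides and Gluck's computation of $\pi_0(\Diff(S^1\times S^2))$). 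Invoking Theorem~\ref{thm:thmA} to deduce Theorem~\ref{thm:thmAprime} is not circular, but it routes the easy theorem through the hard one and still leaves you needing essentially the cobordism the paper builds.
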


Of course, Theorems~\ref{thm:thmA} and~\ref{thm:thmAprime} would not be surprising if all knots in $S^1\times S^2$ were isotopic to one another. However, this is not the case; we show in Section~\ref{sec:prelims} that there are indeed infinitely many isotopy classes of knots with any fixed winding number in $S^1\times S^2$.

Recall that a knot in $S^3$ is said to be \emph{slice (resp.\ topologically slice)} if it bounds a smooth (resp.\ locally flat) disk in $D^4$, or equivalently, if it is smoothly (resp.\ topologically) concordant to the unknot.
We say that a knot~$K$ in $S^1\times S^2$ is \emph{slice (resp.\ topologically slice)} if it bounds a smoothly (resp.\ locally flat) embedded disk in $D^2\times S^2$. Note that any slice knot is topologically slice. If a knot is concordant to a slice knot, it is of course slice itself. With our definition it is tempting to say that a knot in $S^1\times S^2$ is slice exactly if it is concordant to $H$. However, note that the $(p,1)$ cable of $H$, for any $p\neq 0$, bounds a smooth disk in $D^2\times S^2$ constructed by taking $p$ copies of the disk bounded by $H$, banding them together as directed by the cabling pattern, and pushing the interior of the bands into $D^2\times S^2$.
\begin{figure}[htb]
\begin{tikzpicture}
\node at (0,0) {\includegraphics[width=4cm]{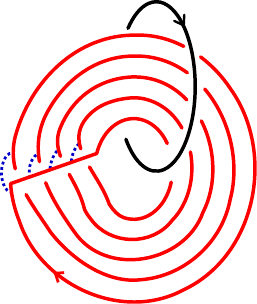}};
\node at (1,2.3) {$0$};
\end{tikzpicture}
\caption{The $(5,1)$-cable~$H_{5,1}$ of the Hopf knot,
together with a sequence of band moves reducing~$H_{5,1}$ to
$5$ parallel copies of the Hopf knot.}\label{fig:Hopf-knot-cable}
\end{figure}
On the other hand, these cables are distinct even up to topological concordance since they have distinct winding numbers. We show in the following theorem that winding number is far from being the only obstruction for slice knots to be concordant. 

\begin{thmx}\label{thm:thmB}
For any $w \in \Z$ with $w \neq \pm1$, there exists an infinite family of slice knots with winding number~$w$ which are pairwise distinct in topological concordance.
\end{thmx}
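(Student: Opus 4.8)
The plan is to produce, for each $w \neq \pm 1$, an infinite family of slice knots of winding number $w$ by infecting a fixed ribbon slice knot along a well-chosen curve, and then to distinguish their topological concordance classes with Casson--Gordon-type invariants of knots in $S^1 \times S^2$. For $w \ge 2$ I would take $K_0 = H_{w,1}$, viewed as a ribbon knot bounding the disk $D_0 = \bigl(\bigcup_{i=1}^{w} D^2 \times \{p_i\}\bigr) \cup (e_1 \cup \cdots \cup e_{w-1})$ in $D^2 \times S^2$, where the bands $e_j$ realize the moves of Figure~\ref{fig:Hopf-knot-cable} read backwards and can be chosen to lie in disjoint balls near $S^1 \times S^2$. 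For $w = 0$ I would take $K_0 = H \#_b \overline{H}$, a winding number $0$ ribbon knot bounding $(D^2 \times \{p\}) \cup b \cup (D^2 \times \{p'\})$; for $w \le -2$ I would take the reverse of the $|w| \ge 2$ construction. Fix one band $e$ of the ribbon presentation and let $\eta \subset S^1 \times S^2 \setminus K_0$ be a meridian of $e$, so that $\eta$ bounds a disk meeting $K_0$ transversely in the two oppositely oriented strands running along $e$. For a knot $J \subset S^3$, let $K_J$ be the result of infecting $K_0$ along $\eta$ by $J$ --- equivalently, of tying $J$ into the band $e$.

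Then $K_J$ is a slice knot of winding number $w$ whose abelian invariants do not depend on $J$. Indeed, $\eta$ is nullhomotopic in $S^1 \times S^2$, so infection along $\eta$ neither changes the ambient manifold nor the free homotopy class of $K_0$, giving $w(K_J) = w$. Replacing $e$ by the resulting knotted band leaves $D_0$ an embedded disk (a disk with bands glued along a tree is a disk, however the bands are embedded), so $K_J$ bounds a disk in $D^2 \times S^2$ and is slice. And $[\eta] = 0$ in $H_1(S^1 \times S^2 \setminus K_0) \cong \Z \oplus \Z/|w|$, so the Alexander module and Blanchfield form of $K_J$ coincide with those of $K_0$; hence only metabelian invariants can detect $J$.

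To separate the concordance classes, I would fix a prime power $q$ dividing $w$ (any prime power if $w = 0$). The meridian of $K_J$ then has order divisible by $q$ (resp.\ infinite order), so the $q$-fold cyclic branched cover $Y_J$ of $S^1 \times S^2$ along $K_J$ exists, and for a prime-power-order character $\chi \colon H_1(Y_J) \to \Q/\Z$ one has a Casson--Gordon invariant $\tau(K_J, \chi)$. Following the framework of~\cite{FNOP16}, a topological concordance between knots in $S^1 \times S^2$ should produce --- via branched covers of the concordance exterior and a half-lives-half-dies argument --- a metabolizer for the relevant linking form relating the two sides, on which the Casson--Gordon invariants must vanish; this makes $\tau$ a topological concordance obstruction. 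An infection formula in the style of Litherland and Gilmer should give $\tau(K_J, \chi) = \tau(K_0, \chi_0) + \sum_i \sigma_J(\zeta^{a_i})$, the sum running over the lifts of $\eta$ to $Y_J$, with $\sigma_J$ the Levine--Tristram signature. Choosing $e$ and $\chi$ so that some $a_i \neq 0$, and taking $J = J_n$ (say $J_n = T(2, 2n+1)$) so that $\sum_i \sigma_{J_n}(\zeta^{a_i}) \to \infty$, a standard counting argument then shows that for $n \neq n'$ no metabolizer can reconcile $\tau(K_{J_n}, -)$ with $\tau(K_{J_{n'}}, -)$, so $\{K_{J_n}\}$ is the desired family.

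I expect the main obstacle to be the machinery of the last paragraph, which is not immediately inherited from the $S^3$ theory: setting up Casson--Gordon invariants for knots in $S^1 \times S^2$ and proving the metabolizer principle in that setting --- identifying $H_1$ and the linking form of the branched covers of a knot exterior and of a concordance exterior --- and then controlling the metabolizer of $Y_{J_n}$ precisely enough to push through the infinitude argument. As an alternative one could try to separate the classes using metabelian twisted Alexander polynomials or Cochran--Orr--Teichner $L^{(2)}$-signatures, which are likewise topological concordance invariants and transform predictably under the infection defining $K_J$.
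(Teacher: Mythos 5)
Your construction of the slice knots themselves is sound and closely parallels the paper's: the knots $K_w(J)$ of Figure~\ref{fig:many-slice-knots} are exactly cables of the Hopf knot with $J$ tied into the ribbon bands, and your verification that they are slice of winding number $w$ with unchanged abelian invariants goes through. The genuine gap is in the distinguishing step. Everything you need there --- a Casson--Gordon obstruction to \emph{topological concordance} (not merely sliceness) for knots in $S^1\times S^2$, the metabolizer/half-lives-half-dies principle for branched covers of a concordance exterior in $S^1\times S^2\times I$, and a Litherland--Gilmer-type infection formula in this setting --- is asserted with ``should'' and is not established; none of it is off-the-shelf, and \cite{FNOP16} does not supply it. This is not a routine transcription of the $S^3$ theory: when the paper itself resorts to Casson--Gordon invariants (for the odd-winding-number non-sliceness results of Section~\ref{sec:nonslice-odd}), it only obtains a \emph{sliceness} obstruction, and only by routing through Gilmer--Livingston's embedding obstruction for $S^4$ together with a delicate analysis of the possible framings, characters, and a colored-signature formula of Cimasoni--Florens; a full concordance-with-metabolizers version is considerably harder and is proved nowhere. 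Even granting the machinery, you would still need to verify that a prime-power character $\chi$ on $H_1(Y_J)$ exists sending some lift of your infection curve $\eta$ to a nonzero element, which requires computing $H_1$ of the branched cover of $S^1\times S^2$ along $K_0$ and the deck-group orbit of the lifts of $\eta$; since $[\eta]=0$ already in $H_1$ of the knot exterior, this is not automatic.

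The paper avoids all of this with a much more elementary device: pass to the $w$-fold cyclic cover of $S^1\times S^2$ (which is again $S^1\times S^2$), where $K_w(J)$ lifts to a $w$-component link; band two components together to obtain a \emph{distant} knot such as $\Phi(J\#J\#(rJ)_{2,1})$ (or $\Phi(J\#rJ)$ when $w=0$, or $\Phi((J\#rJ)_{2,1})$ when $w=2$); a topological concordance downstairs lifts to disjoint concordances upstairs, which after the banding become a low-genus cobordism between distant knots; and Theorem~\ref{thm:distant-knots-cobordisms} transports this to a genus bound in $S^3\times I$ that ordinary Levine--Tristram signatures violate for $J_i$ a suitable number of trefoils. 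If you want to complete your argument, I would replace the Casson--Gordon step with this covering-link reduction; as written, the core of your proof is a research program rather than a proof.
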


\noindent{We also establish that there exist knots that are not topologically slice.}

\begin{thmx}\label{thm:thmC}
For any $w \in \mathbb{Z}$ with $w \neq \pm 1, \pm 3$, there exists an infinite family of winding number $w$ knots, none of which is topologically slice, and which are pairwise distinct up to topological concordance.
\end{thmx}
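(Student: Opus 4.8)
The plan is to obstruct topological sliceness, and topological concordance, by Casson--Gordon-type invariants of finite cyclic covers (for $w=0$ these degenerate to classical Levine--Tristram invariants of the Alexander cover of $S^1\times S^2\setminus\nu K$), applied to a satellite construction. Embed the solid torus $S^1\times D^2$ as a tubular neighborhood $\nu(H)$ of the Hopf knot in $S^1\times S^2$. Inside it, start from the $(w,1)$-torus pattern, whose image is the slice knot $H_{w,1}$, and infect along a curve $\eta$ in the exterior of $H_{w,1}$ that bounds a disk in $S^1\times S^2$ and is nullhomologous in the exterior; a meridian of the core Hopf knot works, since it links $H_{w,1}$ algebraically $w$ times and the meridian of $H_{w,1}$ has order $w$ in the first homology of the exterior. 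For a knot $J\subset S^3$ with suitably large signatures, set $K_n:=H_{w,1}(\eta;\#^n J)$; this infection just replaces the $(w,1)$-torus pattern by the $(w,1)$-cable of $J$, so each $K_n$ has winding number $w$. These will be the desired family (one may also vary the infecting knot rather than take connected powers).

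\emph{Reduction to a surgery statement.} A Mayer--Vietoris computation gives $H_1(S^1\times S^2\setminus\nu K)\cong\Z\oplus\Z/w$ for any winding number $w$ knot $K$, the $\Z/w$ generated by a meridian $\mu_K$. If $K$ bounds a locally flat disk $\Delta$ in $D^2\times S^2$, then $W_K:=(D^2\times S^2)\setminus\nu\Delta$ is a topological $4$-manifold whose boundary is a framed surgery on $K$ (the framing induced by $\Delta$ playing the role of the $0$-framing), and the long exact sequence of $(D^2\times S^2,W_K)$ shows that $W_K$ has the rational homology of a point, that $H_1(W_K)\cong\Z/w$, and that $\pi_1(W_K)$ is normally generated by $\mu_K$, which has order $w$. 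A character $\chi\colon H_1(\partial W_K)\to\Q/\Z$ extends over $W_K$ exactly when it factors through $\Z/w$, and for such $\chi$ the associated abelian Casson--Gordon invariant of $K$ vanishes by the usual $G$-signature argument (which uses only the bounding $4$-manifold, not topological surgery theory). Likewise, a topological concordance from $K_m$ to $K_n$ yields a rational homology cobordism between the corresponding surgeries respecting the meridians, forcing the Casson--Gordon invariants to agree on matching characters.

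\emph{Computation.} By Litherland's satellite formula, the Casson--Gordon invariant of $K_n$ at a character $\chi$ equals that of the model knot $H_{w,1}$ plus a sum of Levine--Tristram signatures of $\#^n J=n\cdot J$, evaluated at roots of unity determined by $\chi$ and $\lk(\eta,K_n)$; the model term is computed directly. Choosing $J$ so that this signature contribution is nonzero for \emph{every} character $\chi$ that could extend over a putative rational homology ball bounded by a surgery on $K_n$, and observing that it grows linearly in $n$, shows simultaneously that no $K_n$ is topologically slice and that the $K_n$ are pairwise non-concordant.

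\emph{Main obstacle.} The crux is the last step: arranging the infection so that the surgery manifolds $\partial W_{K_n}$ and their linking forms are explicit enough to identify precisely which $\Q/\Z$-characters extend over a hypothetical bounding rational homology ball, and then verifying that the Casson--Gordon signature is nonzero for all of them. This requires a nontrivial character of order dividing $w$ that survives every metabolizer, and it is exactly here that the hypothesis on $w$ is forced: the requirement fails trivially for $w=\pm1$ (no torsion is available) and, for subtler combinatorial reasons tied to the smallness of the group $\Z/3$, also for $w=\pm3$. A more routine task is to establish the ``slice $\Rightarrow$ bounding topological rational homology ball with meridian of order $w$'' reduction and the vanishing of the abelian Casson--Gordon invariant entirely within the locally flat category.
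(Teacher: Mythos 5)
Your overall strategy for odd $w$ --- pass to the exterior of a hypothetical locally flat slice disk in $D^2\times S^2$, observe that it is a rational homology ball with $H_1\cong\Z/w$ whose boundary is a surgery on $K$, and obstruct via Casson--Gordon signatures computed from a satellite-type formula --- is essentially the paper's strategy in Section~\ref{sec:nonslice-odd} (which uses the Gilmer--Livingston bound for rational homology balls in $S^4$ together with the Cimasoni--Florens colored-signature formula in place of Litherland). But there is a genuine gap at exactly the point you flag as the ``main obstacle,'' and its resolution is the real content of the proof. The framing $f$ induced on $K$ by the slice disk is not canonical: Proposition~\ref{prop:framingrelationship} shows only that $f=-2w\lambda$, where $\lambda$ is the (unknown, a priori unbounded) algebraic intersection number of the disk with $D^2\times\{pt\}$. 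Both the surgered manifold $M_f(K)$ and the set of characters that extend over the disk exterior depend on $\lambda$, and the Casson--Gordon signature of $M_f(K)$ contains a correction term that grows linearly in $f$ (Lemma~\ref{lem:controlF} bounds it only between $\min(0,f)-w$ and $\max(0,f)+w$). Hence making the signature contribution of $J$ large at a single character proves nothing: the unknown $f$ can absorb it. The paper's fix is to require two distinct primitive $d$-th roots of unity $\zeta_1,\zeta_2$, for a prime power $d\mid w$, with $\sigma_J(\zeta_1)>2w$ and $\sigma_J(\zeta_2)<-2w$; the two resulting inequalities force $f>0$ and $f<0$ simultaneously, a contradiction. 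This also shows that your diagnosis of the $w=\pm3$ exclusion is not correct: it has nothing to do with metabolizer combinatorics, but with the fact that for $d=3$ the only primitive roots are complex conjugates, so $\sigma_J(\zeta_1)=\sigma_J(\zeta_2)$ and signatures of opposite sign cannot be arranged.

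The same difficulty defeats a unified Casson--Gordon treatment of the even cases $w=0,\pm2,\pm4,\pm6,\pm12$ (every prime-power divisor $d\in\{2,3,4\}$ has all primitive roots conjugate, and for $w=0$ the disk exterior is not a rational homology ball at all), so a separate even-$w$ argument is unavoidable, not merely convenient. The paper's Section~\ref{sec:nonslice-even} handles all even $w$ elementarily: a single non-orientable band move takes the example $K_w(J)$ to the distant knot $\Phi(J)$, so sliceness would force $\gamma^{\top}_4(J)\le 1$, which is obstructed by the Murakami--Yasuhara linking-form criterion (Corollary~\ref{corollary:MY}). Finally, for pairwise non-concordance the paper does not compare Casson--Gordon invariants of the two knots (which would again run into framing ambiguities and the lack of a surgery description for a concordance exterior); it lifts a putative concordance to the $w$-fold cyclic cover, bands two components of the covering link into the distant knot $\Phi(J_i\# rJ_i)$, and produces a genus-one cobordism between knots in $S^3$ that Levine--Tristram signatures forbid, via Theorem~\ref{thm:distant-knots-cobordisms}. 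To complete your proposal you would need to incorporate the framing analysis and the two-character trick, and supply an independent argument for even winding numbers.
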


Above note that as a consequence of Theorem \ref{thm:thmA}, any winding number $\pm 1$ knot is topologically slice. In contrast, we believe our inability to show the above for winding number~$\pm 3$ knots is a limitation of our techniques, rather than an actual outstanding case. We are unable to even produce one example of a non-slice knot of winding number~$\pm 3$.

Lastly, we study the disparity between the topological and smooth categories, by establishing analogues of Theorem~\ref{thm:thmB} and Theorem~\ref{thm:thmC} using families of topologically concordant knots.

\begin{thmxprime}\label{thm:thmBprime}
For any $w \in \mathbb{Z}$ with $w \neq \pm1$, there exists an infinite family of slice knots with winding number $w$ which are topologically concordant to one another but pairwise distinct in smooth concordance.
\end{thmxprime}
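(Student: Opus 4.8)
The plan is to recycle the family of slice knots constructed in the proof of Theorem~\ref{thm:thmB}, but to feed it topologically slice input in place of the knots with nontrivial classical invariants, and to replace the topological obstruction used there by a smooth one. Recall that the knots realizing Theorem~\ref{thm:thmB} are obtained from the slice knot $H_{w,1}$ by a satellite-type modification that takes an auxiliary knot $J\subset S^3$ as input: the modification is supported in a neighbourhood $N$ of a fixed surface (or curve) in $S^1\times S^2$, along which one glues in a copy of the exterior $E(J)$; it produces a winding number $w$ knot $K_w(J)$; and --- this is the geometric content we take from the proof of Theorem~\ref{thm:thmB} --- $K_w(J)$ bounds a smooth disk in $D^2\times S^2$ for \emph{every} choice of $J$. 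In Theorem~\ref{thm:thmB} the distinct topological concordance classes arise by choosing the $J$'s with, say, pairwise distinct Alexander polynomials or Tristram--Levine signature functions, which survive as topological concordance data of $K_w(J)$ via a covering-space argument.

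For Theorem~\ref{thm:thmBprime} I would instead take $J = J_n := \#^n\, D$, the $n$-fold connected sum of the positively-clasped, untwisted Whitehead double $D$ of the right-handed trefoil, and set $K_n := K_w(J_n)$. Each $J_n$ has Alexander polynomial $1$, so it is topologically slice, hence topologically concordant to the unknot $U$ in $S^3\times I$; as this concordance is supported in a ball, it sits inside $N\times I$, and therefore induces a topological concordance in $(S^1\times S^2)\times I$ from $K_w(J_n)$ to $K_w(U) = H_{w,1}$. Consequently every $K_n$ is slice and has winding number $w$ (both inherited from the construction), and the $K_n$ are all topologically concordant to one another. It remains to prove that they are pairwise smoothly \emph{non}-concordant.

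For this I would rerun the covering-space argument underlying the topological obstruction of Theorem~\ref{thm:thmB}, but now carrying a smooth invariant through it. The goal is to show that a smooth concordance from $K_m$ to $K_n$ can be cut apart along the product region $N\times I$ supporting the modification and reassembled so as to exhibit $J_m$ and $J_n$ as smoothly concordant inside an integer homology $S^3\times I$ (equivalently, to produce a smooth slice disk for the difference knot $J_m\#\overline{J_n}$ in a smooth $\Z$-homology $4$-ball). Since $\tau$ --- equivalently Rasmussen's $s$-invariant --- descends to the homology concordance group, and since $\tau(J_n) = n$ by Hedden's computation of $\tau$ of Whitehead doubles together with the additivity of $\tau$, this forces $m = n$, completing the proof for every $w\neq\pm1$. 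If instead the reduction most naturally yields a smooth homology cobordism between Dehn surgeries or branched covers rather than a concordance of knots, one simply replaces the $J_n$ by a topologically slice family realizing distinct values of the corresponding Heegaard Floer correction term $d$, such families being known to exist.

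The crux --- and the only place the hypothesis $w\neq\pm1$ should be needed --- is precisely this reduction: passing from an \emph{arbitrary} smooth concordance between the modified knots, which need not visibly be the modification of anything, to the desired smooth homology cobordism relating the input knots. This is the smooth analogue of the topological covering-space obstruction behind Theorem~\ref{thm:thmB}, and I expect the winding-number hypothesis to enter through the homology of the relevant cover of the exterior of $H_{w,1}$, in a way that is forced to fail when $w=\pm1$ by the smooth lightbulb theorem, Theorem~\ref{thm:thmA}. Everything else in the argument is either inherited verbatim from the construction behind Theorem~\ref{thm:thmB} or is the standard fact that a satellite/infection-type operation applied to a topologically slice knot preserves the topological concordance class.
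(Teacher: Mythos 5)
Your proposal follows essentially the same route as the paper: the same knots $K_w(J)$ from the proof of Theorem~\ref{thm:thmB}, seeded with connected sums of the positively clasped untwisted Whitehead double of the right-handed trefoil, topological concordance to the $(w,1)$-cable of the Hopf knot coming from topological sliceness of the seeds, and smooth distinctness via the covering-link reduction to distant knots combined with $\tau$. The only imprecision is at the crux you flag: the reduction (lift the concordance to the $w$-fold cover, band components of the covering link together, apply Theorem~\ref{thm:distant-knots-cobordisms}) yields a bounded-genus smooth orientable cobordism in $S^3\times I$ between derived knots such as $J_i\#J_i\#(rJ_i)_{2,1}$, not a concordance of the $J_i$ themselves, so one invokes the genus bound coming from $\tau$ exactly as in the proof of Theorem~\ref{thm:thmB} rather than mere concordance invariance.
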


\begin{thmxprime}\label{thm:thmCprime}
 For any $w \in 2\mathbb{Z}$, there exists an infinite family of non-slice knots with winding number $w$ which are topologically slice, topologically concordant to one another, but pairwise distinct in smooth concordance.
\end{thmxprime}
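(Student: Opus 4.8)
The plan is to construct the family by connected sum with knots in $S^3$. Fix a slice knot $K_0 \subset S^1\times S^2$ of winding number $w$ --- for instance the $(w,1)$-cable of the Hopf knot when $w\neq 0$, which is slice by the discussion around Figure~\ref{fig:Hopf-knot-cable}, and the unknot when $w=0$ --- and fix a topologically slice knot $J\subset S^3$ that is not smoothly slice and satisfies $d\bigl(\Sigma_2(S^3,J)\bigr)\neq 0$ (such $J$ exist classically). Set
\[
K_n \;:=\; K_0 \,\#\, (\#^n J).
\]
Each $K_n$ has winding number $w$ since $\#^n J$ is nullhomologous. The topological assertions are immediate: boundary connect-summing a smooth slice disk for $K_0$ in $D^2\times S^2$ with a locally flat slice disk for $\#^n J$ in $D^4$, inside $D^2\times S^2\,\natural\,D^4 = D^2\times S^2$, shows that $K_n$ is topologically slice; and connect-summing a topological concordance from $\#^n J$ to the unknot with the constant concordance of $K_0$ shows that all the $K_n$ are topologically concordant to $K_0$, hence to one another.

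The real work is the smooth obstruction, which must separate the $K_n$ from $K_0$ and from each other. Here the hypothesis $w\in 2\Z$ enters: $K_n$ is $\Z/2$-nullhomologous, so the double branched cover $\Sigma_2(S^1\times S^2, K_n)$ is defined, and since connected sum commutes with double branched covers and $S^1\times S^2 \# S^3 = S^1\times S^2$,
\[
\Sigma_2(S^1\times S^2,K_n) \;=\; Y_0 \,\#\, \Sigma_2(S^3, \#^n J), \qquad Y_0 := \Sigma_2(S^1\times S^2, K_0).
\]
As $\#^n J$ has trivial Alexander polynomial, $\Sigma_2(S^3,\#^n J)$ is an integer homology sphere with a unique $\mathrm{spin}^c$ structure and $d\bigl(\Sigma_2(S^3,\#^n J)\bigr)= n\cdot d\bigl(\Sigma_2(S^3,J)\bigr)$, so by additivity of the correction terms
\[
d\bigl(\Sigma_2(S^1\times S^2,K_n),\, \mathfrak{s}\#\mathfrak{t}\bigr) \;=\; d(Y_0,\mathfrak{s}) + n\cdot d\bigl(\Sigma_2(S^3,J)\bigr)
\]
for every $\mathrm{spin}^c$ structure $\mathfrak{s}$ on the fixed summand $Y_0$.

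Now a smooth concordance from $K_n$ to $K_m$ in $(S^1\times S^2)\times I$ has as its double branched cover a rational homology product between these $3$-manifolds, along which the correction terms are preserved compatibly with $\mathrm{spin}^c$ structures; combined with the displayed formula this forces $n\cdot d(\Sigma_2(S^3,J)) = m\cdot d(\Sigma_2(S^3,J))$, hence $n=m$, so the $K_n$ are pairwise distinct in smooth concordance. For non-sliceness, suppose $K_n$ bounds a smooth disk $D$ in $D^2\times S^2$; then $W:=\Sigma_2(D^2\times S^2, D)$ is a smooth $4$-manifold with $\partial W = Y_0 \# \Sigma_2(S^3,\#^n J)$, and because the ambient manifold is $D^2\times S^2$ --- rather than an arbitrary $4$-manifold filling $S^1\times S^2$ --- the rational homology and intersection form of $W$ are constrained (independently of $D$) strongly enough that the correction-term inequalities applied to $W$ and to its orientation reverse pin $d(\partial W)$ to a value independent of $n$. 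Since that value is the one realized by the genuinely slice knot $K_0$, we get $n\cdot d(\Sigma_2(S^3,J)) = 0$, contradicting $n\geq 1$.

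The main obstacle is exactly this last input. One must compute $H_*(W;\Q)$ --- which comes down to controlling the Alexander-type homology of the properly embedded disk $D$ in the infinite cyclic cover of $(D^2\times S^2)\setminus D$ --- pin down the definiteness of $W$, and run the correction-term argument in the presence of the fixed summand $Y_0$, whose first Betti number is positive. None of this is automatic, and it is here that the geometry of $D^2\times S^2$, as opposed to an arbitrary $4$-manifold bounded by $S^1\times S^2$, does the essential work. (For $w=0$ one can bypass the branched covers: $K_n$ is then genuinely nullhomologous, so its Ozsv\'ath--Szab\'o $\tau$-invariant is defined and equals $n\cdot\tau(J)$ by additivity; since $[D]^2=0$ and $b_1(D^2\times S^2)=0$ it directly obstructs $K_n$ from bounding a disk in $D^2\times S^2$ and separates smooth concordance classes. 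This special case is a good model for the general argument.)
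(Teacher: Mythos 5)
Your construction and the topological assertions are fine, but there is a genuine gap exactly where you locate it: the smooth obstruction is never actually established. Both the non-sliceness of $K_n$ and the pairwise distinctness in smooth concordance are made to rest on a correction-term argument for the double branched covers $\Sigma_2(S^1\times S^2,K_n)\cong Y_0\,\#\,\Sigma_2(S^3,\#^nJ)$, and you concede that computing $H_*(W;\Q)$ for the branched cover $W$ of $(D^2\times S^2,D)$, pinning down its definiteness, and running the $d$-invariant inequalities in the presence of the summand $Y_0$ are ``not automatic.'' They are not: $Y_0$ has positive first Betti number (already for the $(w,1)$-cable of the Hopf knot), so the Ozsv\'ath--Szab\'o correction terms are defined only for torsion $\mathrm{spin}^c$ structures and their behaviour under the relevant covers and fillings is substantially more delicate than in the rational homology sphere setting; the paper itself remarks, immediately after the statement of Theorem~\ref{thm:thmCprime}, that this sort of $d$-invariant strategy is blocked by ``a lack of computability.'' As written, your argument therefore establishes neither that $K_n$ is non-slice nor that $K_n$ and $K_m$ are smoothly non-concordant for $n\neq m$ when $w\neq 0$. (For $w=0$ your knots are the distant knots $\Phi(\#^nJ)$, so Theorem~\ref{thm:distant-knots-cobordisms} and Corollary~\ref{cor:slice-distant-knots} reduce everything to $S^3$, and $\tau$ does finish that one case.)

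The paper avoids Heegaard Floer theory entirely. It takes the knots of Proposition~\ref{prop:evencrosscap}: for topologically slice $J_i\subset S^3$ with smooth non-orientable $4$-genus $\gamma_4(J_i)\geq i+1$ (connected sums of Whitehead doubles of the trefoil), the knot $K_w(J_i)$ of Figure~\ref{fig:nonsliceevenslice} is topologically concordant to the $(2,1)$-cable of $H_{w,1}$ (hence topologically slice and all topologically concordant to one another), while the visible M\"obius band from $K_w(J_i)$ to $\Phi(J_i)$ together with Corollary~\ref{cor:slice-distant-knots} gives $\gamma_4(K_w(J_i))\geq\gamma_4(J_i)-1\geq i$. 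Since $\gamma_4$ (computed in $D^2\times S^2$) vanishes on slice knots and is a smooth concordance invariant, non-sliceness and pairwise smooth non-concordance are immediate. To salvage your route you would have to carry out the branched-cover homology and definiteness computation in full; switching to $\gamma_4$, whose input from $S^3$ is already available, is what makes the paper's proof two lines long.
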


For odd winding numbers, we expect that there is an argument, similar to what we use in the proof of Theorem~\ref{thm:thmC}, using $d$-invariants~\cite{OzSz03}. The main limitation appears to be a lack of computability.

\subsection*{Outline}In Section~\ref{sec:prelims}, we give precise definitions as well as some results about knots in $S^1\times S^2$ that are contained in a ball, called \emph{distant knots}, and isotopy classes of knots. In Section~\ref{sec:concordance-to-Hopf}, we prove Theorems~\ref{thm:thmA} and~\ref{thm:thmAprime}. This consists of an explicit construction of a $4$-manifold which we then show is diffeomorphic to $S^1\times S^2\times I$. Section~\ref{sec:slice-knots} gives the proof of Theorem~\ref{thm:thmB}, using covering links and our previous results about distant knots. We address non-slice knots with even winding numbers in Section~\ref{sec:nonslice-even} and those with odd winding numbers in Section~\ref{sec:nonslice-odd}; together the results of these sections give the proof of Theorem \ref{thm:thmC}. The techniques of Section~\ref{sec:nonslice-even} are quite direct, whereas Section~\ref{sec:nonslice-odd} involves a technical theorem about Casson-Gordon invariants. Lastly, we address the difference between the smooth and topological categories in Section~\ref{sec:smooth-vs-top}, where we prove Theorems \ref{thm:thmBprime} and \ref{thm:thmCprime}, which involves appropriate modifications of the proofs of Theorems~\ref{thm:thmB} and~\ref{thm:thmC}.

\begin{ackn}
We thank Stefan Friedl and Mark Powell for helpful discussions. We are also grateful to the anonymous referee for thoughtful suggestions. This project was started and completed while the authors were participating in the Junior Hausdorff Trimester Program in Topology at the Hausdorff Research Institute for Mathematics in the fall of 2016. We thank HIM for a stimulating research atmosphere. MN is supported by a CIRGET postdoctoral fellowship.
\end{ackn}

\begin{remark*}
Lisa Piccirillo informed us in the spring of 2017 that she has an independent unpublished proof of Theorem~\ref{thm:thmA}. A similar proof to hers was independently obtained by Yildiz~\cite{Yil17}, whose paper appeared on the arXiv immediately prior to this one.
\end{remark*}

\section{Preliminaries}\label{sec:prelims}
We begin this section by formalizing the definitions we use throughout the paper.

A \emph{knot}~$K \colon \gamma \into Y$ is a smooth embedding
in a closed, oriented $3$-manifold~$Y$ of
an oriented $1$-manifold~$\gamma$ diffeomorphic to $S^1$. We identify two such parametrizations
$K\colon \gamma \hookrightarrow Y$ and $K' \colon \gamma' \hookrightarrow Y$
if there exists an orientation preserving diffeomorphism~$\gamma \to \gamma'$
mapping between the two embeddings.

We write $S^1$ for the unit circle oriented as the boundary of the postively oriented unit disk.
For an oriented manifold $X$, the manifold~$\overline X$ denotes the same manifold with the opposite orientation.
Given a knot $K\colon S^1\hookrightarrow Y$, the \emph{reverse knot} $rK\colon\overline{S^1}\hookrightarrow Y$ is the same function as $K$, but with the opposite orientation on $S^1$. Similarly, the \emph{mirror} knot $\overline{K}\colon S^1\hookrightarrow \overline{Y}$ is the same function as $K$, but with the opposite orientation on $Y$.
We write $-K$ for the knot~$r \overline K$.

Recall that two knots $K$ and $J$ in any closed, oriented $3$-manifold $Y$ are said to be \emph{isotopic} if they are homotopic through smooth embeddings; they are said to be \emph{ambient isotopic} if there exists a $1$-parameter family of diffeomorphisms $F_t:Y\to Y$ such that $F_0=\Id$ and $F_1\circ K=J$; and they are said to be \emph{equivalent} if there is an orientation preserving diffeomorphism $\phi: Y\to Y$ such that $\phi\circ K=J$. By the isotopy extension theorem, the first two notions are equivalent for any $Y$. Indeed, for $Y=S^3$, any orientation preserving diffeomorphism is smoothly isotopic to the identity map~\cite{Cerf68}, and thus, the three notions are identical. There is a vast body of literature concerning knots in $S^3$, up to any of the above notions of equivalence. Note that there exist $3$-manifolds where not all orientation preserving diffeomorphisms are smoothly isotopic to the identity, e.g.\ in $S^1\times S^2$, the Gluck twist, which gives the $S^2$ factor a full rotation as one travels along the $S^1$ factor, is not isotopic to the identity by \cite{Gluck62}. Thus, for knots in $S^1\times S^2$, there is a need to be careful in specifying which $3$-dimensional notion of equivalence is being considered. In this paper, we will use the notion of ambient isotopy, i.e.\ ambient isotopic knots will be considered to be identical. As noted above, this is the same as considering knots up to isotopy.

Given an oriented manifold $X$, consider the manifold $X\times I$.
The orientation on $I$ induces an orientation on $X\times I$, which in turn induces the boundary orientation on $\partial^+ (X\times I)\coloneqq X\times \{1\}$ and
$\partial^- (X\times I) \coloneqq X\times \overline{\{0\}}$, where $\{pt\}$ denotes a point with the positive orientation and $\overline{\{pt\}}$ has the negative orientation. Under these conventions, we have the identifications $\partial^+ (X\times I)=X$ and
$\partial^- (X\times I)=\overline{X}$. Additionally, given a function $f\colon X\times I\rightarrow Y$,
we write $\partial^+f \coloneqq f\big|_{\partial^+X} \colon X \to Y$,
and $\partial^-f\coloneqq f\big|_{\partial^-X} \colon \overline X \to Y$.

We say that two knots $K_0$ and $K_1$ in a closed, connected, oriented $3$-manifold $Y$ are \emph{concordant (resp.\ topologically concordant)} if there exists a smooth (resp.\ locally flat) properly embedded annulus~$A\colon S^1\times I \hookrightarrow Y\times I$ such that
\[
\partial^+ A \sqcup \partial^- A= K_1 \sqcup -K_0 \subset Y \sqcup \overline Y;
\]
i.e.\ $\partial^+ A\colon S^1\rightarrow \partial^+(Y\times I)=Y$ is the knot $K_1$ and $\partial^- A\colon \overline{S^1}\rightarrow \partial^-(Y\times I)=\overline{Y}$ is the knot~$- K_0$.

Clearly, winding number is preserved under isotopy and ambient isotopy. More generally, if $K$ and $J$ are homotopic knots (i.e.\  homotopic as maps to $S^1\times S^2$), $w(K)=w(J)$. Since a concordance yields a homotopy, winding number is also preserved under concordance in either category. We will draw pictures of knots in the standard surgery diagram for $S^1\times S^2$, e.g.\ in Figure~\ref{fig:Hopf-knot}. Recall that
the orientation on $S^3$ (resp.\ $B^4$) gives a well-defined orientation on the manifolds obtained as a realization of a surgery (resp.\ Kirby) diagram.
In the standard diagram in Figure~\ref{fig:Hopf-knot},
we orient the $0$-framed surgery curve, which gives an identification of $H_1(S^1\times S^2;\Z)$ with $\Z$, generated by the meridian with linking number one. Note that this shows that the Hopf knot has winding number one, as desired.

Let $M$ and $N$ be closed, oriented $3$-manifolds. For any abelian group~$R$, an \emph{$R$-homology cobordism} from $M$ to $N$ is a triple $(W,\phi^+, \phi^-)$ where $W$ is a compact, smooth $4$-manifold such that
\begin{enumerate}
\item $\partial W = \partial^- W \sqcup \partial^+ W$,
\item $\phi^+ \colon \partial^+ W \to M$ is an orientation preserving diffeomorphism,
\item $\phi^- \colon \partial^- W \to N$ is an orientation reversing diffeomorphism, and
\item the induced maps $H_*(\partial^\pm W;R)\to H_*(W;R)$ are isomorphisms.
\end{enumerate}
We will use the term \emph{homology cobordism} for a $\Z$-homology cobordism.

\begin{definition}
Let $R$ be an abelian group.  We say that the knots $K_1\colon S^1\hookrightarrow M$ and $K_0\colon S^1 \hookrightarrow N$, for closed, oriented $3$-manifolds $M$ and $N$, are \emph{$R$-homology concordant} or just \emph{$R$-concordant} if there exists an $R$--homology cobordism $(W,\phi^+,\phi^-)$ from $M$ to $N$ and a smooth, proper embedding $A\colon S^1\times I\into W$ such that $\phi^+\circ \partial^+ A = K_1$ and $\phi^-\circ \partial^- A = rK_0$. If $R=\Z$, we simply say that $K$ and $J$ are \emph{homology concordant}.
\end{definition}

\begin{remark}
Above, the reader may have expected the requirement that $\phi^-\circ \partial^- A = r\overline{K_0}$ instead of $rK_0$. This is accounted for by the fact that $\phi^{-1}$ is orientation reversing.
\end{remark}

We note that a (smooth) concordance between two knots in a $3$-manifold $Y$ is simply a smooth, proper embedding of an annulus in the homology cobordism $(Y\times I, \Id, \overline{\Id})$. That is, not only do we fix the homology cobordism, but also the diffeomorphisms on the boundary components.
The map~$\Id$ is the orientation preserving map $\partial^+(Y\times I) = Y \rightarrow Y$ given by $\Id(y)=y$ and $\overline{\Id}$ is the orientation reversing map~$\partial^- (Y\times I) =\overline Y\rightarrow Y$ given by $\overline{\Id}(y)=y$.

\begin{proposition}\label{prop:op-diffeo-concordant}
If $K$ and $J$ are knots in a compact, oriented, connected $3$-manifold $Y$ and $\phi\colon Y\to Y$ is an orientation preserving diffeomorphism such that $\phi\circ J=K$, then $K$ and $J$ are homology concordant.
\end{proposition}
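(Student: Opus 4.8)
The plan is to use $\phi$ to build a \emph{twisted product} homology cobordism, in which the obvious annulus traced out by $J$ under the product structure reads off as $K$ on the top boundary. Concretely, I would take $W \coloneqq Y \times I$. Being a product, $W$ deformation retracts onto $Y\times\{1\}$ and onto $Y\times\{0\}$, so the inclusions of both boundary components induce isomorphisms on $H_*(-;R)$ for every abelian group $R$; in particular condition~(4) in the definition of an $R$-homology cobordism holds, and with room to spare, so $W$ will in fact be a $\Z$-homology cobordism (indeed a product cobordism).

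Next I would install the boundary data $(\phi^+,\phi^-)$. Using the standard identification $\partial^+(Y\times I)=Y$, set $\phi^+\colon \partial^+ W \to Y$ to be $(y,1)\mapsto \phi(y)$; since $\phi$ is orientation preserving and the identification is orientation preserving, $\phi^+$ is an orientation preserving diffeomorphism, as required by~(2). Using $\partial^-(Y\times I)=\overline Y$, set $\phi^-\colon \partial^- W \to Y$ to be $(y,0)\mapsto y$, i.e.\ the map $\overline{\Id}$; this is orientation reversing, as required by~(3). Thus $(W,\phi^+,\phi^-)$ is a $\Z$-homology cobordism from $Y$ to $Y$.

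For the annulus I would take $A\coloneqq J\times \Id_I\colon S^1\times I\into Y\times I = W$, which is visibly a smooth proper embedding. Unwinding the conventions for $\partial^\pm$ of a product, $\partial^+A$ is $J$ (with source $S^1$) landing in $\partial^+ W = Y$, whence $\phi^+\circ\partial^+A = \phi\circ J = K$; and $\partial^- A$ is $J$ with source $\overline{S^1}$ landing in $\partial^- W = \overline Y$, whence $\phi^-\circ\partial^- A = rJ$. This is precisely the data witnessing that $K$ and $J$ are homology concordant, completing the argument.

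The only step requiring care — and the only thing I would call an obstacle, and a mild one — is the orientation bookkeeping in the last paragraph: one must simultaneously track the reversals built into the conventions $\partial^-(X\times I)=\overline X$ and $\partial^-(S^1\times I)=\overline{S^1}$, and verify that composing with the orientation reversing map $\phi^-=\overline{\Id}$ yields $rJ$ (rather than $J$, or $\overline J$) on the negative end, in accordance with the remark following the definition of $R$-concordance. There is no topological content beyond this.
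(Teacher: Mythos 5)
Your construction is exactly the paper's: the cobordism $(Y\times I;\phi,\overline{\Id})$ with the product annulus $J\times\Id_I$, together with the same orientation bookkeeping showing $\phi^+\circ\partial^+A=K$ and $\phi^-\circ\partial^-A=rJ$. The proposal is correct and matches the paper's proof in every essential respect.
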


\begin{proof}
The desired homology cobordism is $(Y \times I; \phi, \overline{\Id})$ where $J\times\Id_{I}$ is the required concordance. We have the orientation preserving map  $\phi$ given by
\begin{align*}
\phi\colon \partial^+(Y\times I) = Y&\rightarrow Y\\
y&\mapsto \phi(y)
\end{align*}
and as before, $\overline{\Id}$ is the orientation reversing map
\begin{align*}
\overline\Id\colon  \partial^-(Y\times I) = \overline Y&\rightarrow Y\\
y&\mapsto y
\end{align*}
As needed, we see that
\[\phi\circ \partial^+(J\times\Id_{I}) = \phi(J)=K\]
 and
 \[\overline{\Id}\circ \partial^-(J\times\Id_{I}) = \overline{\Id}\circ r\overline{J}=rJ. \qedhere
 \]\end{proof}

\begin{remark}\label{rem:reverse}
In particular, consider the diffeomorphism
\begin{align*}
\phi:S^1\times S^2&\to S^1\times S^2\\
(s,t) &\mapsto (\overline{s}, \overline{t})
\end{align*}
where we reflect along a hyperplane in each factor. This reverses the orientations of the $S^1$ and $S^2$ factors, and thus, $\phi$ is orientation preserving. In particular, $\phi$ sends
the Hopf knot to the reverse of the Hopf knot, and so the Hopf knot is homology concordant to its reverse by the previous proposition.
\end{remark}

For any commutative ring~$R$, since $S^1\times S^2$ and $S^1$ have fixed orientations, we get an identification $R = H_1(S^1 \times S^2; R)$. The \emph{$R$-winding number} of a knot $K$ is defined to be the class $[K] \in H_1(S^1 \times S^2; R) = R$. From Remark \ref{rem:reverse}, we see that two knots~$K$ and $J$ may be homology concordant and not topologically concordant, since in particular, the Hopf knot and its reverse have different winding numbers. If $K_0$ and $K_1$ are $R$-homology concordant in the $R$-homology cobordism $(W; \phi^+,\phi^-)$, then the isomorphism
\[
(\phi^+_*)\circ (\phi^-_*)^{-1}:H_1(S^1\times S^2;R)\to H_1(S^1\times S^2;R)
\]
sends the class $[K_0] \in H_1(S^1 \times S^2; R) = R$ to the class $[K_1]$. That is, there is an isomorphism of the underlying groups~$R\xrightarrow{\cong} R$ sending the $R$-winding number of $K_0$ to that of~$K_1$. The fact that concordant knots have the same winding number corresponds to the fact that $(\Id_*)\circ (\overline\Id_*)^{-1}$ is the identity map on $\Z$, whereas for a homology cobordism, a general diffeomorphism might induce the map on $\Z$ given by multiplying by $-1$, such as the map $\phi$ in Remark~\ref{rem:reverse}. As a sanity check going forward, we see that Theorem~\ref{thm:thmAprime}\, requires the omission of winding number $0$ knots in $S^1\times S^2$, since the $0$ map is obviously not an isomorphism on $\Q$. However, there do exist isomorphisms of $\Q$ taking $1$ to any non-zero integer of our choice, and thus, winding number considerations at least do not contradict Theorem~\ref{thm:thmAprime}.

\subsection{Distant knots}
From the identification~$S^3\# S^1 \times S^2 = S^1 \times S^2$ we obtain
a inclusion~$\Phi$ from the set of knots in $S^3$ to the set of
knots in $S^1 \times S^2$. The image of this map consists of the knots in
$S^1\times S^2$ which may be isotoped to be disjoint from one fiber~$\{\text{pt}\} \times S^2$, or equivalently, knots that are contained in a ball. We call such knots \emph{distant knots}. Below we prove that $\Phi$ induces a well-defined injective map on concordance classes.

\begin{theorem}\label{thm:distant-knots-cobordisms} Let $S$ be an abstract (possibly non-orientable) surface with two boundary components. Two knots $K$ and $J$ in $S^3$ cobound a smooth surface homeomorphic to $S$ in $S^3\times I$ if and only if  $\Phi(K)$ and $\Phi(J)$ cobound a smooth surface homeomorphic to $S$ in $S^1\times S^2\times I$.

In particular, $K$ and $J$ are smoothly concordant in $S^3 \times I$ if and only if $\Phi(K)$ and $\Phi(J)$ are smoothly concordant in $S^1\times S^2\times I$.

The same is true in the topological category.
\end{theorem}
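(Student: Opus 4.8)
I would prove the two implications separately, the reverse direction being the substantial one.

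For the forward direction, I would start from a smooth surface $\Sigma\cong S$ in $S^3\times I$ with $\bdry\Sigma=K\sqcup J$ and pass to $(S^1\times S^2)\times I$ by an internal connected sum. The point is that for closed $3$--manifolds $M_1,M_2$ one has a canonical identification
\[
(M_1\#M_2)\times I \;=\; \bigl((M_1\times I)\setminus\nu(\{p_1\}\times I)\bigr)\ \cup_{S^2\times I}\ \bigl((M_2\times I)\setminus\nu(\{p_2\}\times I)\bigr),
\]
where $\{p_i\}\times I\subset M_i\times I$ is a properly embedded arc joining the two boundary components. I would apply this with $M_1\#M_2=S^3\#(S^1\times S^2)=S^1\times S^2$, choosing $p_1\in S^3$ off $K$ and $J$ and then perturbing $\{p_1\}\times I$ rel its endpoints to miss $\Sigma$ (an arc and a surface generically miss in a $4$--manifold). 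Then $\Sigma$ is unaffected by excising $\nu(\{p_1\}\times I)$ and gluing on the $(S^1\times S^2)$--summand, so it survives as a smooth surface, still homeomorphic to $S$, in $(S^1\times S^2)\times I$; by the definition of $\Phi$ its boundary becomes $\Phi(K)\sqcup\Phi(J)$. The topological category is identical.

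For the reverse direction, I would start from $\Sigma\cong S$ in $(S^1\times S^2)\times I$ with $\bdry\Sigma=\Phi(K)\sqcup\Phi(J)$ and arrange $\Phi(K),\Phi(J)$ to sit, at levels $0$ and $1$, inside a common standard ball $B\subset S^1\times S^2$ realizing them as the standard ``$K$ in a ball'' and ``$J$ in a ball'' (so that capping $B$ off recovers $(S^3,K)$ and $(S^3,J)$, using that the complement of a ball in $S^3$ is a ball). Picking a fiber sphere $F=\{\mathrm{pt}\}\times S^2$ disjoint from $B$ and setting $G\coloneqq F\times I$ (a properly embedded $S^2\times I$ with $\bdry G$ disjoint from $\bdry\Sigma$), I would make $\Sigma$ transverse to $G$; then $L\coloneqq\Sigma\cap G$ is a link of circles. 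Cutting $(S^1\times S^2)\times I$ along $G$ produces $(S^2\times I)\times I$, and capping its two ``side'' faces with copies of $D^3\times I$ produces $S^3\times I$, in which $B$ (hence the capped-off pair, which is just $K$ and $J$) sits in the $S^2\times I$ factor. So, cutting $\Sigma$ along $G$, I obtain a surface $\widehat\Sigma$ embedded in $S^3\times I$ with $\bdry\widehat\Sigma=(K\sqcup J)\sqcup L^-\sqcup L^+$, the $L^{\pm}$ being the two copies of $L$ created by the cut and now lying in the interior. To conclude I would re-glue each circle of $L^-$ to its partner in $L^+$ by an embedded annulus, with the annuli pairwise disjoint and meeting $\widehat\Sigma$ only along their common boundary: since gluing two boundary circles of a surface by an annulus preserves the homeomorphism type, this yields a properly embedded surface $\cong S$ in $S^3\times I$ with boundary $K\sqcup J$.

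The hard part is precisely this last step---producing the re-gluing annuli as disjoint embeddings disjoint from $\widehat\Sigma$. The facts I would use are that $\bdry\Sigma$ has winding number zero, so $\Sigma$ meets each fiber sphere $F\times\{t\}$ algebraically zero times, and that $F$ carries a geometric dual (a Hopf knot $S^1\times\{\mathrm{pt}'\}$ meets $F$ transversely once); vanishing intersection number together with a dual sphere is exactly what powers a Whitney-move argument of this kind. I would also stress why the naive alternative fails: one cannot just isotope $\Sigma$ off $G$ inside $(S^1\times S^2)\times I$, because when $S$ has positive genus a curve on $\Sigma$ can carry nonzero winding number---e.g.\ the core of the genus-one cobordism from $\Phi(U)$ to $\Phi(\text{trefoil})$ built by banding in a way that wraps the $S^1$---so $\Sigma\cap G$ need not be removable by an isotopy, and actually cutting and rebuilding the surface is unavoidable. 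As before, the topological category requires no changes.
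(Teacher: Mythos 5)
Your forward direction is essentially the paper's argument: pick $p\in S^3$ off $K\sqcup J$, push the surface off the arc $\{p\}\times I$ by general position, and replace a neighbourhood of that arc by $(S^1\times S^2\setminus B')\times I$. That half is fine. The reverse direction, however, has a genuine gap, and it is exactly the step you yourself flag as ``the hard part.'' After cutting along $G=F\times I$ and capping with two copies of $D^3\times I$, the links $L^-$ and $L^+$ sit on the boundaries of two \emph{disjoint} caps, so every reglueing annulus must travel back through the middle region $(I\times S^2)\times I$ that contains all of $\widehat\Sigma$; you give no construction of such annuli that is embedded, pairwise disjoint, and disjoint from $\widehat\Sigma$, and the tools you invoke do not obviously supply one. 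A Whitney move separates pairs of transverse intersection \emph{points} of submanifolds of complementary dimension, whereas $\Sigma\cap G$ is a link of circles (a surface meeting a $3$--manifold inside a $4$--manifold); the ``geometric dual'' you name is a circle dual to the sphere $F$ inside the $3$--manifold, not a dual sphere in the $4$--manifold; and since the theorem allows $S$ to be non-orientable, the algebraic intersection numbers you want to vanish are only defined mod $2$. So the proposal correctly diagnoses why the naive isotopy fails, but it does not actually produce the surface in $S^3\times I$.

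The paper avoids this entirely by never cutting the surface. It builds $S^3\times I$ as a stack: start with $(S^3\times I,\,K\times I)$, attach a $1$--handle to $S^3\times\{1\}$ away from $K$ so that the top becomes $S^1\times S^2$ and the cylinder now ends at $\Phi(K)$, glue on the given pair $(S^1\times S^2\times I, F)$, and finally attach a $2$--handle along a Hopf knot in the new top boundary, chosen disjoint from the distant knot $\Phi(J)$, cancelling the $1$--handle. The total space is $S^3\times I$ and the surface $K\times I\cup F\cong S$ runs from $K$ to $J$. The only general position required is to keep the $1$--dimensional handle-attaching regions in the boundary $3$--manifolds away from the boundary knots, which is automatic. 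I would recommend replacing your cut-and-reglue outline with this handle-stacking argument; as written, your reverse implication is not a proof.
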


A result similar to Theorem~\ref{thm:distant-knots-cobordisms} was shown in~\cite{NOPP17}, using completely different techniques, by Powell, Orson, and the second and third authors. Their method addresses only concordance of distant knots (rather than possibly non-orientable surfaces of non-zero genus cobounded by them), but applies to all closed, oriented $3$-manifolds.

\begin{proof}[Proof of Theorem~\ref{thm:distant-knots-cobordisms}]
Suppose that $K$ and $J$ cobound a smooth surface~$F\cong S$ in $S^3\times I$.  Pick a point $p\in S^3$ disjoint from $K$ and $J$. By an isotopy, we assume that $F$ is disjoint from $\{p\}\times I$.  Thus, for a small neighborhood $B$ of $p$, $F$ is disjoint from $B\times I$.  Modify $S^3\times I$ by cutting out $B\times I$ and gluing in $(S^1\times S^2-B')\times I$ for some small $3$-ball $B'$ in $S^1\times S^2$.  The resulting $4$-manifold is $S^1\times S^2\times I$.  The image of $F$ in this $4$-manifold is a smooth surface cobounded by $\Phi(K)$ and $\Phi(J)$.

Conversely, suppose that $\Phi(K)$ and $\Phi(J)$ cobound a smooth surface~$F\cong S$ in $S^1\times S^2\times I$. Let $C$ denote the cylinder $K\times I \subset S^3\times I$. Add a $1$-handle to $S^3\times\{1\}$ away from $K$. The resulting $4$-manifold~$W_0$ has boundary $-S^3\sqcup S^1\times S^2$ and $\bdry C = K\sqcup \Phi(K)$. Attach $(S^1\times S^2\times I, F)$ to $(W_0,C)$ along their common boundary components to obtain a $4$-manifold $W_1$. Now we cancel the $1$-handle by attaching a $2$-handle to the positive boundary of $W_1$, so that the resulting $4$-manifold is $S^3\times I$. In this $4$--manifold $C\cup F$ is a smooth surface cobounded by $K$ and $J$.

An identical proof works in the topological category.
\end{proof}

\noindent We give a similar result for surfaces bounded by distant knots in $D^2\times S^2$.

\begin{proposition}\label{prop:distant-knots-slices}
Let $K$ be a knot in $S^3$ and $S$ be an abstract (possibly non-orientable) surface with a single boundary component. The distant knot $\Phi(K)\subset S^1\times S^2$  bounds a smoothly embedded surface homeomorphic to $S$ in $D^2\times S^2$ if and only if $K$  bounds a smoothly embedded surface homeomorphic to $S$ in $D^4$.

 The same is true in the topological category.
\end{proposition}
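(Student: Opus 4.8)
The plan is to mirror the proof of Theorem~\ref{thm:distant-knots-cobordisms}, with the product cobordisms $S^3\times I$ and $S^1\times S^2\times I$ replaced by the fillings $D^4$ and $D^2\times S^2$. The relevant geometric fact is that $D^2\times S^2$ is obtained from $D^4$ by attaching a single $2$-handle $h$ along a $0$-framed unknot $U\subset S^3=\partial D^4$; equivalently, $D^4$ sits inside $D^2\times S^2$ as the $0$-handle, and the solid torus $S^3\setminus\nu(U)$ sits inside $\partial(D^2\times S^2)=S^1\times S^2$ (this plays the role of the ``distant'' region). Write $D_c\subset h$ for the cocore disk of $h$. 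It has a geometric dual: the sphere $\{p\}\times S^2\subset D^2\times S^2$, with $p$ interior to the $D^2$-factor (equivalently, the section sphere formed from the core of $h$ together with a Seifert disk of $U$), meets $D_c$ transversally in one point and has trivial normal bundle.

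For the implication ``$K$ bounds a copy of $S$ in $D^4$ $\Rightarrow$ $\Phi(K)$ bounds a copy of $S$ in $D^2\times S^2$'' I would argue directly. Given $S'\cong S$ in $D^4$ with $\partial S'=K$, use an ambient isotopy of $S^3$, extended over a collar and dragging $S'$ along, to move $K$ into a ball disjoint from $U$, so that $\partial S'=K\subset S^3\setminus\nu(U)$. Then the inclusion $D^4\hookrightarrow D^2\times S^2$ exhibits $S'$ as a properly embedded surface in $D^2\times S^2$ whose boundary is a copy of $K$ lying in a ball of $S^1\times S^2$; since balls in $S^1\times S^2$ are standard, that boundary knot is $\Phi(K)$, and $S'$ is still homeomorphic to $S$. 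This direction is routine, and works identically in the topological category.

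For the converse, suppose $\Phi(K)$ bounds $F\cong S$ in $D^2\times S^2$. It suffices to isotope $F$ into the $0$-handle $D^4$: restricting to $D^4$ then yields a surface homeomorphic to $S$ whose boundary is a copy of $\Phi(K)$ inside a ball of $S^3=\partial D^4$, hence a copy of $K$. Because $\Phi(K)$ is distant I may take $\partial F$ disjoint from $D_c$ and from $\{p\}\times S^2$. The long exact sequence of the pair gives an isomorphism $H_2(D^2\times S^2,\partial;\Z)\xrightarrow{\ \cong\ }H_1(\partial;\Z)=\Z$ sending a class to its boundary, so $[F]$ is detected by $w(\Phi(K))=0$; hence $[F]=0$, and in particular the algebraic intersection number of $F$ with $D_c$ vanishes. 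I would then remove the intersection points of $F$ with $D_c$ one at a time by the Norman trick, tubing $F$ against parallel copies of the dual sphere $\{p\}\times S^2$: each move eliminates one intersection point and changes $F$ only by a connect sum with $S^2$, so neither the homeomorphism type of $F$ nor $\partial F$ changes. Once $F\cap D_c=\emptyset$, a radial isotopy pushes $F$ off $h$ and into $D^4$. The same scheme should go through topologically, and with $\Z/2$ coefficients when $S$ is non-orientable.

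The main obstacle is precisely this last geometric step: promoting the homological identity $[F]\cdot[D_c]=0$ to an honest isotopy of $F$ off $D_c$. For the Norman trick to produce an embedded surface, the parallel copies of $\{p\}\times S^2$ that one tubes into must be disjoint from $F$, and arranging this is the crux. It again leans on $[F]=0$: this forces the projection $F\to D^2$ to have degree zero, which should allow one to isotope $F$ so that it misses some fibre $\{p'\}\times S^2$, whose nearby copies then supply the spheres needed. Carrying this out carefully — together with the bookkeeping for the non-orientable and topological cases — is where the real content lies; everything else is a direct transcription of the proof of Theorem~\ref{thm:distant-knots-cobordisms}.
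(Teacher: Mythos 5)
Your forward direction is essentially the paper's argument (include $D^4$ into $D^2\times S^2$ as the $0$-handle after pushing $K$ off the attaching region), and it is fine. The converse, however, has a genuine gap, and it is exactly the one you flag: the Norman trick only keeps $F$ embedded if the parallel copies of $\{p\}\times S^2$ that you tube into are disjoint from $F$, and a properly embedded surface in $D^2\times S^2$ will in general meet \emph{every} fiber sphere. Your fallback --- ``the projection $F\to D^2$ has degree zero, so $F$ can be isotoped off some fiber'' --- does not follow: vanishing of an algebraic count never by itself yields geometric disjointness in dimension $4$, and making $F$ miss a fiber is essentially as hard as the disjointness from $D_c$ you were trying to achieve in the first place. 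So the central step of your converse is not established.

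The paper's proof avoids moving $F$ at all by going in the opposite direction: rather than isotoping $F$ \emph{into} the $0$-handle $D^4\subset D^2\times S^2$, it \emph{enlarges} $D^2\times S^2$ to $D^4$. Since $\Phi(K)$ is distant, it can be isotoped (together with a collar of $F$) off the essential sphere $\{1\}\times S^2\subset S^1\times S^2=\partial(D^2\times S^2)$; attaching a $3$-handle along that sphere cancels the $2$-handle of $D^2\times S^2$ (its attaching sphere meets the belt circle $S^1\times\{pt\}$ once), so the result is $D^4$, with $F$ sitting inside it unchanged and $\partial F$ now a copy of $K$ in $S^3$. No intersection points ever need to be removed, and the orientable/non-orientable and smooth/locally flat cases require no separate bookkeeping. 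If you want to salvage your route you would need an honest argument that $F$ can be made disjoint from $D_c$; the $3$-handle trick is the standard way to sidestep that entirely.
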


\begin{proof}
Suppose $K$ bounds $F\cong S$ in $D^4$. Add a $2$-handle to $D^4$ along a $0$-framed unknot unlinked from $K$. The resulting $4$-manifold is $D^2\times S^2$ and $F\cong S$ is now a surface bounded by $\Phi(K)$.

Conversely, suppose that $F\cong S$ is a surface bounded by $\Phi(K)$ in $D^2 \times S^2$. Since $\Phi(K)$ is contained in a $3$-ball, we isotope it away from $\{1\} \times S^2$ and add a $3$-handle along $\{1\} \times S^2$ in $S^1\times S^2 = \bdry (D^2\times S^2)$ to get $D^4$. The image of $\Phi(K)$ is precisely $K$, and it bounds $F$.
\end{proof}

Recall that for a knot $K\subset S^3$, the \emph{$4$-genus~$g_4(K)$} is the minimum genus of smooth connected oriented surfaces in $D^4$ bounded by $K$. The \emph{non-orientable $4$-genus~$\gamma_4(K)$} of $K$ is the minimum genus of all (possibly non-orientable) surfaces bounded by $K$. We similarly define the \emph{topological $4$-genus~$g^{\top}_4(K)$}, and \emph{topological non-orientable 4-genus~$\gamma^{\top}_4(K)$} by allowing surfaces that are locally flat but may not be smooth. For a knot $K\subset S^1\times S^2$, we similarly define $g_4(K)$, $\gamma_4(K)$, $g^{\top}_4(K)$,
and $\gamma^{\top}_4(K)$ by considering surfaces in $D^2\times S^2$. This gives us the following version of~Proposition~\ref{prop:distant-knots-slices}.

\begin{corollary}\label{cor:slice-distant-knots}
For any knot $K\subset S^3$,
$g_4(K) = g_4(\Phi(K))$, $g^{\top}_4(K) = g^{\top}_4(\Phi(K))$,
$\gamma_4(K) = \gamma_4(\Phi(K))$, and $\gamma^{\top}_4(K) = \gamma^{\top}_4(\Phi(K))$.
\end{corollary}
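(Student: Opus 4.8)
The plan is to deduce the corollary directly from Proposition~\ref{prop:distant-knots-slices}. The point is that each of the four invariants is, by definition, the minimum of a quantity that depends only on the homeomorphism type of a surface, taken over those surfaces (with one boundary component, smooth or locally flat, orientable or possibly non-orientable, according to the invariant) that the knot bounds in $D^4$ or $D^2\times S^2$; and Proposition~\ref{prop:distant-knots-slices} says that for $K$ and $\Phi(K)$ these collections of surfaces are literally the same.

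In more detail, first note that a surface realizing any of these minima may be taken connected: if $F$ is a compact surface with a single boundary component and $\bdry F = K$ (or $\Phi(K)$), then $F$ has a unique connected component containing its boundary, and deleting the remaining (closed) components does not increase the genus, in any of the usual conventions. So, writing $\g(S)$ for the genus of an abstract compact connected surface $S$ with one boundary component,
\[
g_4(K) = \min\{\g(S)\},
\]
where the minimum is taken over all orientable such $S$ for which $K$ bounds a smoothly embedded copy of $S$ in $D^4$; and $g^{\top}_4(K)$, $\gamma_4(K)$, $\gamma^{\top}_4(K)$ are the analogous minima, obtained by replacing ``smoothly embedded'' with ``locally flat'' and/or dropping the requirement that $S$ be orientable. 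The same four formulas hold verbatim for $\Phi(K)$, with $D^4$ replaced by $D^2\times S^2$.

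Now fix such an $S$. Proposition~\ref{prop:distant-knots-slices} asserts exactly that $K$ bounds a smoothly embedded copy of $S$ in $D^4$ if and only if $\Phi(K)$ bounds a smoothly embedded copy of $S$ in $D^2\times S^2$, and likewise in the locally flat category. (Since homeomorphic smooth surfaces are diffeomorphic, there is no difference between ``homeomorphic to $S$'' and ``diffeomorphic to $S$'' here.) Hence the index sets of the four minima above coincide for $K$ and $\Phi(K)$, and the four equalities follow. I do not expect any genuine obstacle: the entire content of the corollary lies in Proposition~\ref{prop:distant-knots-slices}, and all that remains is the bookkeeping observation that a four-genus is determined by the set of abstract surfaces the knot bounds.
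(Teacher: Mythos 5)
Your proposal is correct and matches the paper's treatment: the paper states this corollary as an immediate consequence of Proposition~\ref{prop:distant-knots-slices}, with no further argument, exactly as you do. Your extra bookkeeping remarks (reducing to connected surfaces, and noting that homeomorphism and diffeomorphism type agree for surfaces) are harmless and only make explicit what the paper leaves implicit.
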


Lastly we note that not all winding number zero knots in $S^1\times S^2$ are concordant to distant knots. For example, consider the knot shown in Figure~\ref{fig:not-distant}. Lift to the universal cover $\mathbb{R}\times S^2$ of $S^1\times S^2$. The lift of the knot has infinitely many components. Since $\mathbb{R}\times S^2\subseteq S^3$ we can consider the usual linking number of these components, and we see that consecutive components have linking number one. In contrast, a distant knot must lift to unliked components since it lies in a ball. Since linking number is invariant under concordance, this completes our argument.

\begin{figure}[htb]
\begin{tikzpicture}
\node at (0,0) {\includegraphics[height=5cm]{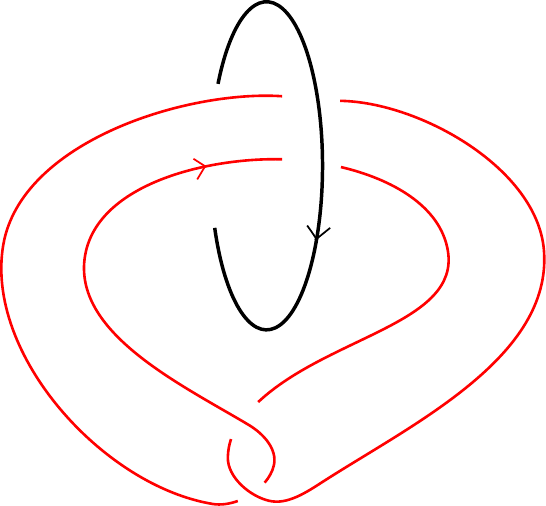}};
\node at (0.5,2) {$0$};
\end{tikzpicture}
\caption{A winding number $0$ knot that is not concordant to a distant knot.}\label{fig:not-distant}
\end{figure}

\subsection{Isotopy classes of knots}
In this brief interlude we address isotopy classes of knots in $S^1\times S^2$. The \emph{geometric winding number} of a knot in $S^1\times S^2$ is the minimum number of times (counted without sign) that it intersects a sphere isotopic to $\{pt\}\times S^2$. Note that this quantity is preserved under isotopy of knots.

We sketch a proof which is an extremely mild generalization of an argument from~\cite[Section~4]{Livingston85}. In that paper, Livingston shows that two given winding number $1$ knots in $S^1\times S^2$ are not isotopic since they have geometric winding number $3$ and $5$ respectively. However, as we show below, the same approach applies to distinguish infinitely many knots with any fixed winding number.

\begin{proposition}[\cite{Livingston85}]
There exist infinitely many isotopy classes of knots with any fixed winding number $w\in\Z$.
\end{proposition}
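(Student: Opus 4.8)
The plan is to realize each winding number $w$ knot as the image of a knot in $S^3$ under a suitable ``insertion'' and use the geometric winding number as the distinguishing invariant. Fix $w\in\Z$. Start with a model knot $K_w$ in $S^1\times S^2$ with winding number $w$ and geometric winding number $|w|$ — for instance, take $|w|$ parallel copies of $S^1\times\{pt\}$ (or, if $w<0$, reverse orientations on enough of them) and band them together as dictated by a $(w,1)$-cable pattern so that the total homology class is $w$ while the geometric intersection with a sphere $\{pt\}\times S^2$ is exactly $|w|$. Next, I would tie a local knot into one of the strands. Concretely, pick a small ball $B\subset S^1\times S^2$ meeting $K_w$ in a single trivial arc, and for each knot $J\subset S^3$ replace that trivial arc by a knotted arc whose ``closure'' in $B$ is $J$; call the result $K_w(J)$. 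This operation changes neither the winding number nor the fact that the geometric intersection with one sphere can be as small as $|w|$, so $K_w(J)$ still has winding number $w$ for all $J$.

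The heart of the argument is to promote Livingston's observation that geometric winding number, suitably refined, detects the local knot type. I would argue as follows. Suppose $K_w(J)$ and $K_w(J')$ are isotopic. Because winding number equals $w$ and the geometric winding number is realized, there is a sphere $\Sigma\cong\{pt\}\times S^2$ meeting $K_w(J)$ transversely in exactly $|w|$ points; cutting $S^1\times S^2$ along $\Sigma$ yields $S^2\times I$, and $K_w(J)$ becomes a collection of $|w|$ arcs (a tangle) in $S^2\times I$ whose endpoints match up under the gluing. Closing this tangle up by capping off in a ball recovers, as one of the local pieces, the knot $J\# (\text{stuff determined only by } K_w)$, i.e.\ $J$ up to a fixed connect-summand independent of $J$. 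An ambient isotopy of $S^1\times S^2$ carrying $K_w(J)$ to $K_w(J')$ can be isotoped (after a further isotopy, since the two realizing spheres are isotopic in $S^1\times S^2$) to carry $\Sigma$ to the corresponding sphere for $K_w(J')$, hence to carry one tangle presentation to the other, hence to exhibit $J\#(\cdots)\cong J'\#(\cdots)$ in $S^3$; by unique factorization of knots under connected sum this forces $J\cong J'$. Taking $\{J_n\}$ to be an infinite family of distinct knots in $S^3$ (say the $(2,2n+1)$ torus knots) then produces infinitely many isotopy classes with winding number $w$.

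The main obstacle I anticipate is the step showing that an isotopy between $K_w(J)$ and $K_w(J')$ can be taken to respect the minimal-genus-realizing spheres — equivalently, that the geometric winding number is realized ``uniquely enough'' that the induced tangle (and thus the local knot type) is an invariant. This is exactly the kind of careful surgery/innermost-disk argument carried out in \cite[Section~4]{Livingston85} for the two specific knots treated there, and the claim ``an extremely mild generalization'' is that the same cut-and-paste reasoning goes through verbatim once one knows a priori that the geometric winding numbers of $K_w(J)$ and $K_w(J')$ agree (both equal $|w|$), so that one can simultaneously realize both minimal spheres and compare the resulting tangles. I would therefore structure the proof to (i) record the construction of $K_w(J)$ and compute its winding and geometric winding numbers, (ii) quote or lightly adapt the relevant cutting lemma from \cite{Livingston85} to extract the local knot type as an ambient-isotopy invariant, and (iii) invoke unique factorization to conclude; the genuinely new content is minimal, which is why only a sketch is warranted here.
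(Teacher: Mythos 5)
Your approach is genuinely different from the paper's, and it has a gap that is fatal in at least one case. The paper does not fix the geometric winding number and then try to extract a finer ``local knot type'' invariant; instead it uses the geometric winding number itself as the distinguishing invariant, exhibiting the family $K_{i,j}$ with $j-i=w$ and showing its geometric winding number is $i+j$, which takes infinitely many values as $i$ varies. This only requires a \emph{lower bound} on the number of intersections with any sphere (via Livingston's disjointness lemma, a clasp, and a count of traversing arcs), not any uniqueness statement about minimal spheres. Your scheme, by contrast, keeps the geometric winding number equal to $|w|$ for every $J$ and asks the tangle obtained by cutting along a minimal sphere to remember $J$. For $w=\pm 1$ this fails outright: by the classical lightbulb trick, any knot meeting $\{pt\}\times S^2$ transversely in a single point is isotopic to the Hopf knot, so tying a local knot $J$ into the single strand of your model produces the Hopf knot for \emph{every} $J$. (This is precisely why Livingston's winding number $1$ examples have geometric winding numbers $3$ and $5$, and why the paper's $K_{i,j}$ always contains a clasp.)

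Even for $|w|\ge 2$, the step you flag as ``the main obstacle'' is a real gap, not a routine adaptation: Livingston's cutting lemma only lets you disjoin a sphere from one meeting the knot in \emph{strictly fewer} points; it gives no uniqueness for two spheres both realizing the minimum, and hence no well-defined ``induced tangle'' or local connect-summand. Moreover, with $w$ parallel strands the partial lightbulb moves (dragging an arc around the $S^2$ factor at the cost of crossings with the other $w-1$ strands) make it far from clear that the local summand $J$ survives as an isotopy invariant; you would need a separate argument, and unique factorization of knots in $S^3$ does not apply until you have shown the decomposition is canonical. If you want to salvage your idea, the cleanest fix is to abandon the local-knot invariant altogether and instead vary the geometric winding number, as the paper does.
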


\begin{proof}[Proof sketch]
Fix a winding number $w\in\N\cup \{0\}$. Consider the family of knots $K_{i,j}\subset S^1\times S^2$ shown in Figure~\ref{fig:nonisotopic-knots} with $j-i=w$. We will show that the geometric winding number of $K_{i,j}$ is $i+j$. Since geometric winding number is preserved under isotopy and winding number changes sign under orientation reversal, this will complete the proof.

\begin{figure}[htb]
\begin{tikzpicture}
\node at (0,0) {\includegraphics[height=5cm]{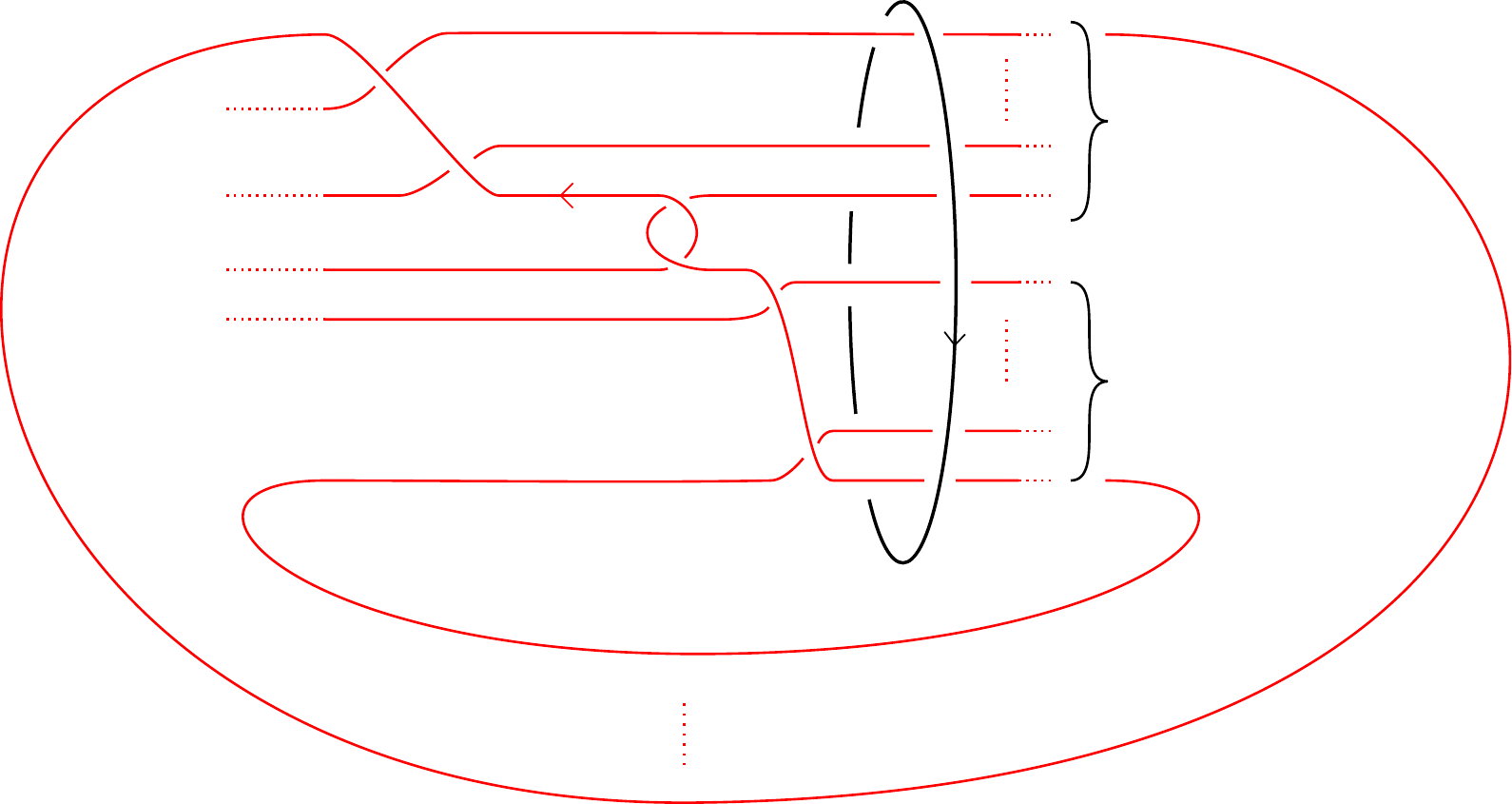}};
\node at (0.5,2) {$0$};
\node at (3,1.75) {$i$ strands};
\node at (3,0.15) {$j$ strands};
\end{tikzpicture}
\caption{The family of knots $K_{i,j}$ in $S^1\times S^2$.}\label{fig:nonisotopic-knots}
\end{figure}
Livingston~\cite[Section~2]{Livingston85} shows that for a
knot~$K\subset S^1\times S^2$, if $K$ intersects two spheres~$\{pt\}\times S^2$
and $S$ in $n$ and strictly fewer than $n$ points respectively, then we may assume that $\{pt\}\times S^2$ and $S$ are disjoint.
In Figure~\ref{fig:nonisotopic-knots}, we see that $K_{i,j}$ intersects the sphere $\{pt\}\times S^2$ in $i+j$ points. Suppose there exists another sphere $S$, isotopic to $\{pt\}\times S^2$, such that it intersects $K_{i,j}$ in fewer points. By the result mentioned earlier, we can assume $S$ to be disjoint from $\{pt\}\times S^2$. Moreover, since any essential sphere in $S^1\times S^2$ is isotopic to $\{pt\}\times S^2$, the new $S$ is still isotopic to $\{pt\}\times S^2$. Cut along $\{pt\}\times S^2$ to obtain $I\times S^2$, within which $K_{i,j}$ appears as $i+j$ arcs and the image of $S$ is an essential sphere. It is easy to see that $i+j-2$ of these arcs traverse from $\{0\}\times S^2$ to $\{1\}\times S^2$ and thus, the image of $S$ in $I\times S^2$, must intersect each of these $i+j-2$ arcs. The remaining two arcs form a clasp, and thus, $S$ cannot split them. This shows that $S$ must intersect $K_{i,j}$ in at least $i+j$ points, which is a contradiction.
\end{proof}


Note that for winding numbers other than $\pm 1$ the result above also follows from our Theorem~\ref{thm:thmB} or~\ref{thm:thmC}. In contrast, the result indicates that there is indeed some content to our Theorem~\ref{thm:thmA}.

\section{Concordance to the Hopf knot}\label{sec:concordance-to-Hopf}

In this section, we prove Theorems~\ref{thm:thmA} and \ref{thm:thmAprime}. Our goal for now will be to show that any winding number $1$ knot is smoothly concordant to the Hopf knot $H$. Reversing the orientation of such a concordance will imply that any knot with winding number $-1$ is smoothly concordant to the reverse of the Hopf knot, $rH$, finishing the proof of Theorem \ref{thm:thmA}.

Given an arbitrary knot $K$ in $S^1\times S^2$, after an isotopy, we may draw it as the closure of a tangle~$P$, as shown in Figure~\ref{fig:diffeomorphic-knot}. Recall that we have fixed an identification of $S^1 \times S^2$ as the realization of the diagram in Figure~\ref{fig:diffeomorphic-knot}.
\begin{figure}[htb]
\begin{tikzpicture}
\node at (0,0) {\includegraphics[width=6cm]{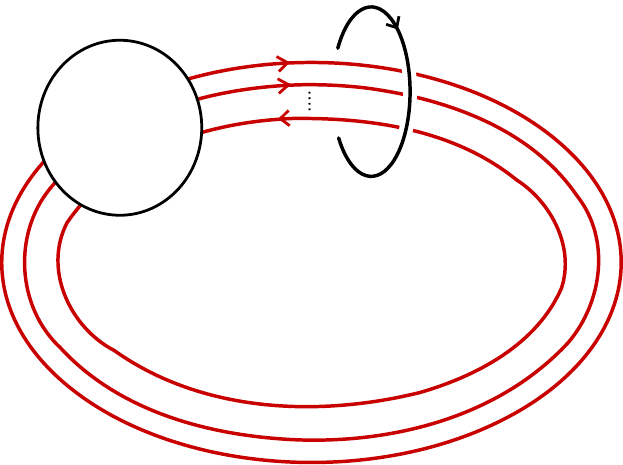}};
\node at (-1.85,1) {\Huge $P$};
\node at (0.7,0.3) {$0$};
\node at (-1.8,-0.9) {$K$};
\end{tikzpicture}
\caption{A generic knot $K$ in $S^1 \times S^2$ given as the closure of a tangle $P$.}\label{fig:diffeomorphic-knot}
\end{figure}
We will construct and study a cobordism $W$ from $S^1\times S^2$ to itself, which supports a smooth concordance from $K$ to the Hopf knot.

Recall that a $0$-framed meridian of a knot is called a \emph{helper circle}, since by sliding the knot over such a helper circle, we can change any crossing of the knot. Also, by such slides, we can separate the knot from any other surgery curve.
\begin{figure}[htb]
\begin{tikzpicture}
\node at (0,0) {\includegraphics[width=6cm]{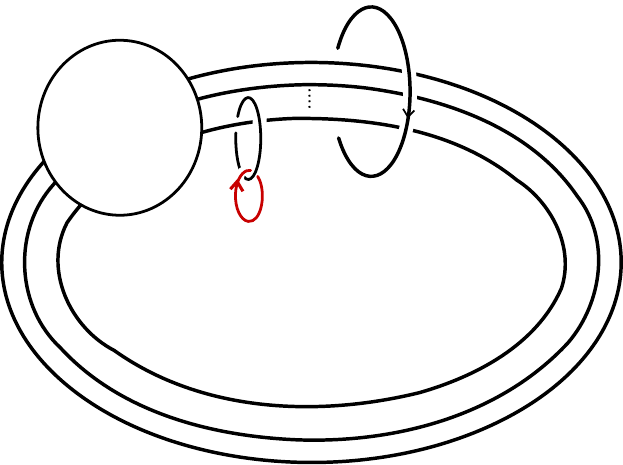}};
\node at (-1.8,1) {\Huge $P$};
\node at (2,1.5) {$0$};
\node at (0.7,0.3) {$0$};
\node at (-0.25,0.8) {$0$};
\node at (-0.25,0.0) {$K'$};
\end{tikzpicture}
\caption{A $3$-manifold $M \cong S^1 \times S^2$ containing a knot $K'$.}
\label{fig:diffeomorphic-knot-1}
\end{figure}
\begin{lemma}\label{lem:modify-knot}
Let $K'$ and $M$ be as shown in Figure~\ref{fig:diffeomorphic-knot-1}. There is an orientation preserving diffeomorphism
$\psi\colon M\to S^1\times S^2$ sending $K'$ to the knot $K$ depicted in Figure~\ref{fig:diffeomorphic-knot}.
\end{lemma}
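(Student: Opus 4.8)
The plan is to prove this by Kirby calculus. Both the pair $(S^1\times S^2,K)$ of Figure~\ref{fig:diffeomorphic-knot} and the pair $(M,K')$ of Figure~\ref{fig:diffeomorphic-knot-1} are presented by surgery diagrams in $S^3$ together with an embedded knot disjoint from the surgery link. Each elementary move on such a diagram -- an ambient isotopy of the link-plus-knot, a handle slide, a blow-up or blow-down, or the removal of a split cancelling configuration (for instance a split $0$-$0$ framed Hopf link) -- is realized by an \emph{orientation preserving} diffeomorphism between the two presented $3$-manifolds which carries the embedded knot of the first diagram to the embedded knot of the second. Consequently it suffices to write down a finite sequence of such moves turning the diagram of $(M,K')$ into the diagram of $(S^1\times S^2,K)$; the composite diffeomorphism is then the desired $\psi$, and it is automatically orientation preserving (we never need the orientation-reversing flexibility here). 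As throughout the paper, ambient isotopic knots are identified, so it is enough to recognize the image of $K'$ as ambient isotopic to $K$.

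To carry this out I would use the helper circles of Figure~\ref{fig:diffeomorphic-knot-1}: as recalled just before the lemma, a $0$-framed meridian lets us change any crossing on the strand it encircles and lets us separate that strand from any other surgery curve, both via handle slides over the corresponding $2$-handle. First I would slide the strands of the tangle $P$ and of $K'$ over these helper circles so as to re-route them into exactly the configuration of the closure of $P$ drawn in Figure~\ref{fig:diffeomorphic-knot}, and simultaneously to unlink the auxiliary $0$-framed surgery curve(s) from $P$, from $K'$, and from the original $0$-framed surgery curve. Once the auxiliary curve(s) have been split off from the rest of the diagram they form a standard cancelling configuration, which we delete; what remains is precisely the diagram of Figure~\ref{fig:diffeomorphic-knot}, now carrying $K'$ in the position of $K$. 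Reading the composite of these moves as a diffeomorphism gives $\psi\colon M\to S^1\times S^2$ with $\psi(K')=K$, as claimed.

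The main obstacle is bookkeeping rather than anything conceptual: one must choose the order of the slides and the circle guiding each slide so that the \emph{single} sequence of moves simultaneously (a) deforms $K'$ into $K$, (b) unlinks and then cancels the auxiliary surgery curve(s), and (c) keeps every framing equal to $0$ and leaves the orientations of the ambient $3$-manifold and of the knot unchanged, all without creating spurious linking between $K$ and the surviving surgery curve. The honest content of the proof is therefore a careful move-by-move passage between the two diagrams, most cleanly presented by drawing the intermediate pictures; I would not attempt to verify it in prose.
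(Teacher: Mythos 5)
Your proposal is correct and follows essentially the same route as the paper: the published proof also proceeds by Kirby calculus, sliding $K'$ over the $2$-handle tied into $P$ (an isotopy of the knot in $M$ that produces a helper circle), using that helper circle to split off the $2$-handle, and then deleting the resulting split $0$-$0$ framed Hopf link as a connected sum with $S^3$, noting that each step is an orientation preserving diffeomorphism carrying $K'$ to $K$. The only difference is that the paper pins down the specific short sequence of slides (recorded in its figures) rather than leaving the bookkeeping open.
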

\begin{proof}
\begin{figure}[htb]
\begin{tikzpicture}
\node at (0,0) {\includegraphics[width=6cm]{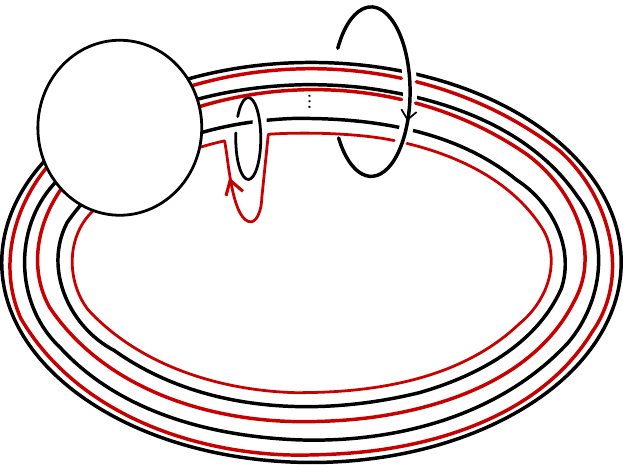}};
\node at (-1.85,1) {\Huge $P$};
\node at (2,1.5) {$0$};
\node at (0.7,0.3) {$0$};
\node at (-0.3,0.7) {$0$};
\node at (-0.3,0.0) {$K'$};
\end{tikzpicture}
\begin{tikzpicture}
\node at (0,0) {\includegraphics[width=6cm]{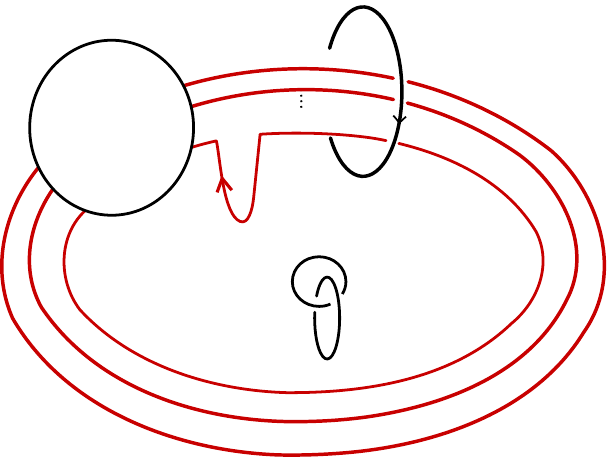}};
\node at (-1.9, 1) {\Huge $P$};
\node at (-0.05, -1.1) {$0$};
\node at (0.7,0.3) {$0$};
\node at (0.6, -0.3) {$0$};
\node at (-0.3,0.0) {$K'$};
\end{tikzpicture}
\caption{Left: An isotopy of the knot $K'$ from Figure~\ref{fig:diffeomorphic-knot-1}. Note that a helper circle has been produced. Right: The result of several handleslides over the helper circle.}\label{fig:diffeomorphic-knot-2}
\end{figure}
Slide $K'$ over the $2$-handle tied into the tangle $P$
resulting in the diagram on the left of Figure~\ref{fig:diffeomorphic-knot-2}. This is just an isotopy of the knot in $M$, and it produces a helper circle. Use this helper circle to separate the $2$-handle we just slid over from the rest of the diagram, producing the diagram on the right of Figure~\ref{fig:diffeomorphic-knot-2}.
The resulting Hopf link with $0$-framed components simply indicates a connected-sum with $S^3$, revealing that there exists a diffeomorphism from $M$ to $S^1 \times S^2$ which sends $K'$ to $K$. This diffeomorphism is orientation preserving since each constituent step is orientation preserving.\end{proof}

\begin{const}\label{const:bordism}
\begin{figure}[htb]
\begin{tikzpicture}
\node at (0,0) {\includegraphics[width=6cm]{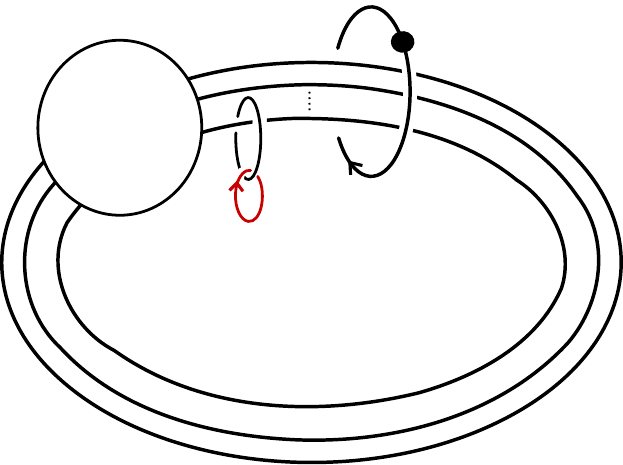}};
\node at (-1.85,1) {\Huge $P$};
\node at (2,1.5) {$0$};
\node at (-0.2,0.8) {$\langle 0\rangle$};
\node at (-0.3,0.0) {$H$};
\end{tikzpicture}
\caption{A cobordism $W$ from $S^1\times S^2$ to $M\cong S^1\times S^2$. The cylinder $H\times I\subseteq W$ is cobounded by the Hopf knot in $\bdry^-W=S^1\times S^2$ and the knot $K'\subseteq\bdry^+W=M$.}\label{fig:P-handle-decomp}
\end{figure}
Consider the relative Kirby diagram in Figure~\ref{fig:P-handle-decomp} and its realization $W$. Its negative boundary is the realization of Figure~\ref{fig:Hopf-knot} and thus is identified with $S^1 \times S^2$. The positive boundary $\partial^+ W$ is the manifold~$M$, since we obtain a diagram for $\partial^+W$ by simply replacing the unknotted circle with a dot on it by a parallel $0$-framed unknot; this results in Figure~\ref{fig:diffeomorphic-knot-1}. Note that the annulus $H\times I \subset W$ is cobounded by the Hopf knot $H$ and the knot $K'\subset M$.
\end{const}

\begin{proposition}\label{prop:concordance-to-modified-knot}
Let $w$ be the winding number of the knot $K$, where $w\neq 0$. Let $R$ be any ring in which $w$ is a unit. The cobordism~$(W; \psi, \overline\Id)$, where $\psi$ is the orientation preserving diffeomorphism from Lemma~\ref{lem:modify-knot} is an $R$-homology cobordism.
\end{proposition}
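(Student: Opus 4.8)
Conditions~(1)--(3) in the definition of an $R$-homology cobordism hold for $(W;\psi,\overline{\Id})$ by construction: $\partial^- W = S^1\times S^2$ is the realization of Figure~\ref{fig:Hopf-knot}, the map $\overline{\Id}\colon\partial^- W = S^1\times S^2\to S^1\times S^2$ is an orientation-reversing diffeomorphism, and $\psi\colon\partial^+ W = M\to S^1\times S^2$ is the orientation-preserving diffeomorphism supplied by Lemma~\ref{lem:modify-knot}. So the substance of the proposition is condition~(4): that the inclusions $\partial^\pm W\hookrightarrow W$ induce isomorphisms on homology with $R$ coefficients. The plan is to read this off from the relative handle decomposition of $W$ encoded by the Kirby diagram in Figure~\ref{fig:P-handle-decomp}.

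First I would record that $W$ is obtained from $\partial^- W\times I$ by attaching exactly one $1$-handle (the dotted circle) and one $2$-handle (the circle tied into the tangle $P$). Thus the handle chain complex $C_*(W,\partial^- W;R)$ is concentrated in degrees~$1$ and~$2$, with $C_1 = C_2 = R$ and with the only nontrivial differential $\partial_2\colon R\to R$ given by multiplication by the integer $d$ that counts, with sign, how many times the attaching circle of the $2$-handle runs over the $1$-handle, i.e.\ the linking number of that attaching circle with the dotted circle. The crucial step --- and the one I expect to demand the most care --- is the identification $d = \pm w$: the attaching circle of the $2$-handle is isotopic to the closure of $P$, whose class in $H_1(S^1\times S^2)$ is $w$ times a generator, and one checks from the diagram that it passes algebraically $\pm w$ times over the $1$-handle (one expects this because the dotted circle can be isotoped to a parallel copy of the $0$-framed surgery curve of Figure~\ref{fig:Hopf-knot}, so that the relevant linking number is precisely the winding number). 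Granting $d = \pm w$ and the hypothesis that $w$ is a unit in $R$, the two-term complex $R\xrightarrow{\cdot d}R$ is exact, so $H_*(W,\partial^- W;R) = 0$; its $R$-dual is again $R\xrightarrow{\cdot d}R$ and hence also exact, so $H^*(W,\partial^- W;R) = 0$.

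The remainder is formal. The long exact sequence of the pair $(W,\partial^- W)$ shows that $\partial^- W\hookrightarrow W$ induces isomorphisms on homology with $R$ coefficients. For the opposite boundary, Poincar\'e--Lefschetz duality for the compact oriented $4$-manifold $W$ gives $H_k(W,\partial^+ W;R)\cong H^{4-k}(W,\partial^- W;R) = 0$ for every $k$, so the long exact sequence of the pair $(W,\partial^+ W)$ shows that $\partial^+ W\hookrightarrow W$ likewise induces isomorphisms on $R$-homology. Together with the diffeomorphisms $\psi$ and $\overline{\Id}$ this verifies condition~(4), completing the proof that $(W;\psi,\overline{\Id})$ is an $R$-homology cobordism. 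The one genuine obstacle here is the Kirby-calculus bookkeeping needed to confirm that $d = \pm w$; the homological algebra is routine, the only subtlety being that, as $R$ is an arbitrary commutative ring, it is cleanest to work with the finite free handle complex and its $R$-dual directly rather than to invoke a universal coefficient theorem.
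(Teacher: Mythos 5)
Your proposal is correct and follows essentially the same route as the paper: the authors likewise read off the two-term relative handle complex $0\to R\xrightarrow{\times w}R\to 0$ for $(W,\partial^-W)$ from Figure~\ref{fig:P-handle-decomp}, conclude $H_*(W,\partial^-W;R)=0$ since $w$ is a unit, obtain $H_*(W,\partial^+W;R)=0$ by Poincar\'e duality, and finish with the long exact sequences. Your added care in identifying the differential as $\pm w$ and in dualizing the free complex rather than invoking universal coefficients is a reasonable elaboration of steps the paper leaves implicit, but it is not a different argument.
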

\begin{proof}
To check that $W$ is a homology cobordism, consider the
relative chain complex over $R$ for $(W, \bdry^- W)$:
\[
\xymatrix{
C_3 = 0 \ar[r] &  C_2= R \ar[r]^{\times w} &  C_1 = R \ar[r] & C_0 = 0
}. \]
Since $w$ is a unit in $R$, we see that $H_*(W, \bdry^- W;R)=0$ and
by Poincar\'e duality $H_*(W, \bdry^+ W;R)=0$.
This implies that both maps
\begin{align*}
H_*(\bdry^- W;R) &\to H_*(W;R)\\
H_*(\bdry^+ W;R) &\to H_*(W;R)
\end{align*}
are isomorphisms and so $W$ is a homology cobordism.
\end{proof}

Note that $K$ and $H$ are $R$-concordant in the manifold $W$ above, via the annulus $H\times I$. Thus, this completes the proof of Theorem~\ref{thm:thmAprime}. Indeed, we have proved the following slightly more general result.

\begin{corollary}\label{cor:homology-concordant-to-Hopf}
Let $K$ be any knot in $S^1\times S^2$ with winding number $w\neq 0$, and $R$ a ring in which $w$ is a unit.  Then $K$ is $R$-concordant to the Hopf knot.
\end{corollary}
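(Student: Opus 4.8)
The plan is simply to assemble the pieces already built in this section. Given a knot $K$ in $S^1\times S^2$ with winding number $w\neq 0$ and a ring $R$ in which $w$ is a unit, first isotope $K$ so that it is the closure of a tangle $P$ as in Figure~\ref{fig:diffeomorphic-knot}, and form the cobordism $W$ of Construction~\ref{const:bordism} built from this diagram. By that construction, $\partial^- W$ is identified with $S^1\times S^2$ and contains the Hopf knot $H$, the positive boundary $\partial^+ W$ is the manifold $M$ of Figure~\ref{fig:diffeomorphic-knot-1} containing $K'$, and the product annulus $H\times I\subset W$ is properly embedded and cobounds $H\subset\partial^- W$ with $K'\subset\partial^+ W$.

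Next I would invoke Lemma~\ref{lem:modify-knot} to obtain the orientation preserving diffeomorphism $\psi\colon M\to S^1\times S^2$ with $\psi\circ K'=K$, and Proposition~\ref{prop:concordance-to-modified-knot} to conclude that, since $w$ is a unit in $R$, the triple $(W;\psi,\overline{\Id})$ is an $R$-homology cobordism from $S^1\times S^2$ to itself. Taking $A=H\times I$, one checks that $\psi\circ\partial^+ A = \psi\circ K' = K$ and $\overline{\Id}\circ\partial^- A = rH$ (the orientation bookkeeping being identical to that in the proof of Proposition~\ref{prop:op-diffeo-concordant}), which is exactly the data required in the definition of $R$-homology concordance with $K_1=K$ and $K_0=H$. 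Hence $K$ is $R$-concordant to $H$; the case $R=\Q$ recovers Theorem~\ref{thm:thmAprime}.

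Since all the genuine content has already been spent — the Kirby-calculus identification of $M$ with $S^1\times S^2$ in Lemma~\ref{lem:modify-knot} and the relative homology computation in Proposition~\ref{prop:concordance-to-modified-knot} — the only point that needs care in the corollary itself is the orientation bookkeeping: that $\psi$ is orientation preserving, that $\partial^- W$ carries the reversed-orientation convention so that $H$ appears as $rK_0$, and that these conventions line up with the definition of $R$-homology concordance (cf.\ the remark following that definition). I would not expect a genuine obstacle here, but it is worth emphasizing why the statement must be phrased in terms of $R$-concordance rather than honest concordance in $S^1\times S^2\times I$: the $(p,1)$-cables of $H$ are slice yet have winding number $p$, so no honest concordance to $H$ can exist when $|p|\neq 1$; it is precisely the freedom to replace $S^1\times S^2\times I$ by the homology cobordism $W$, whose boundary identifications we do not pin down, that makes the argument work.
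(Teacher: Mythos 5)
Your proposal is correct and is essentially identical to the paper's own argument: the paper likewise observes that $K$ and $H$ are $R$-concordant via the annulus $H\times I$ in the cobordism $(W;\psi,\overline{\Id})$ of Construction~\ref{const:bordism}, with Lemma~\ref{lem:modify-knot} supplying $\psi$ and Proposition~\ref{prop:concordance-to-modified-knot} verifying the $R$-homology cobordism condition. Your orientation bookkeeping and the remark on why $R$-concordance (rather than honest concordance) is the right statement are both consistent with the paper's conventions.
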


In the remainder of this section, we focus on knots with winding number~$1$. In this case, the cobordism~$W$ is in fact a product, as follows.
\begin{lemma}\label{lem:handle-decomp}
Let $K\subset S^1\times S^2$ be a winding number $1$ knot and
$(W; \psi, \overline\Id)$ the corresponding cobordism given by Construction~\ref{const:bordism}. Then there is a orientation preserving diffeomorphism of pairs
\[(\phi,\Id)\colon (S^1 \times S^2 \times I, S^1 \times S^2 \times \{0\})\rightarrow (W, \partial^- W)\]
i.e.\ the map $\phi$ restricts to the identity map on $S^1\times S^2\times \{0\}=\partial^-W$.
\end{lemma}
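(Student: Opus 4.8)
The plan is to read a handle decomposition of~$W$ off the relative Kirby diagram in Figure~\ref{fig:P-handle-decomp} and to show that the two handles it displays form a cancelling pair. Relative to its negative boundary, $W$ is built from $(S^1\times S^2)\times I$ -- with $\partial^-W=S^1\times S^2$ presented by the $0$-framed unknot~$\mu_0$ -- by attaching to $(S^1\times S^2)\times\{1\}$ exactly one $1$-handle~$h^1$, recorded by the dotted circle~$c$, and one $2$-handle~$h^2$, recorded by the $0$-framed circle~$\gamma$ that is tied into the tangle~$P$; no handles of other indices occur in this diagram. If $h^1$ and $h^2$ cancel, then the standard cancellation diffeomorphism gives $(S^1\times S^2)\times I\cup h^1\cup h^2\xrightarrow{\ \cong\ }(S^1\times S^2)\times I$, and it may be chosen to be the identity on a collar of $(S^1\times S^2)\times\{0\}$, hence on $(S^1\times S^2)\times\{0\}$ itself; this produces exactly the pair diffeomorphism $(\phi,\Id)$ demanded by the lemma. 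So the whole content is to show that $h^1$ and $h^2$ cancel.

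The point to be established is that the attaching circle~$\gamma$ of~$h^2$ can be isotoped, after suitable handleslides, so as to meet the belt $2$-sphere of~$h^1$ -- equivalently, the disk spanned by the dotted circle~$c$ -- transversely in a single point. Proposition~\ref{prop:concordance-to-modified-knot}, applied with $R=\Z$ (legitimate since $w=1$ is a unit in~$\Z$), already tells us that $\gamma$ runs over~$h^1$ with \emph{algebraic} multiplicity $w=1$; the task is to promote this to \emph{geometric} multiplicity~$1$. I would carry this out by explicit Kirby moves on the part of the diagram involving $c$, $\gamma$, $H$, and~$P$, never moving the curve~$\mu_0$ itself (although freely sliding $\gamma$ over~$\mu_0$, which is a legitimate move leaving $\partial^-W$ and its identification unchanged), in the spirit of the proof of Lemma~\ref{lem:modify-knot}: produce helper circles by sliding over~$h^2$, use them to unknot~$\gamma$ and unlink it from~$P$, and remove oppositely signed intersections of~$\gamma$ with the cocore of~$c$ by finger moves in the $3$-dimensional diagram, the winding number~$1$ hypothesis being what permits the strands of~$P$ to be unwound from the $S^1$-direction until only one of them threads through~$c$. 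After these moves $\gamma$ is a meridian of~$c$, so the pair cancels, and the diagram that remains is just $\mu_0$ together with the untouched Hopf knot~$H$, which presents $(S^1\times S^2)\times I$. Since $\mu_0$ was never moved and the cancellation is supported near the top, the resulting diffeomorphism restricts to the identity on $\partial^-W$.

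I expect the main obstacle to be precisely the passage from algebraic to geometric intersection number one: this is not a formal step, and it is here that the hypothesis $w=1$ is indispensable. For $|w|\ge 2$ the displayed handle pair cannot always cancel, since otherwise the cylinder $H\times I\subset W$ (composed with the diffeomorphism~$\psi$ of Lemma~\ref{lem:modify-knot}) would provide a genuine concordance from~$H$ to~$K$, contradicting Theorem~\ref{thm:thmB}; this is consistent with Proposition~\ref{prop:concordance-to-modified-knot} only yielding an $R$-homology cobordism there. Once the geometric cancellation has been carried out, the remainder -- performing the handle cancellation, recognising the residual diagram~$\mu_0$ as the trivial relative handlebody on $S^1\times S^2$, and checking that $\partial^-W$ is left undisturbed so that $\phi|_{\partial^-W}=\Id$ -- is routine.
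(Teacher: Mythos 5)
Your proposal is correct and follows essentially the same route as the paper: read off the single $1$-handle/$2$-handle pair attached to $\partial^-W\times I$, reduce the geometric winding number of the $2$-handle attaching circle about the $1$-handle from its a priori value down to the algebraic value $1$ by helper-circle crossing changes that let you pull returning arcs off the $1$-handle, cancel the pair, and observe that only the $\langle 0\rangle$-framed unknot remains, so the diffeomorphism is supported away from $\partial^-W$. The only place the paper is cleaner than your sketch is the source of the helper circle: rather than ``producing'' one by sliding over $h^2$ (it is not clear what you would slide over it), the paper uses your curve $\mu_0$ --- the $\langle 0\rangle$-framed surgery curve presenting $\partial^-W$ --- directly as the helper circle, since sliding $\gamma$ over it is an isotopy inside $\partial^-W$ and hence disturbs neither $W$ nor the boundary identification.
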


\begin{proof}

We show that $W$ is diffeomorphic to $S^1\times S^2\times I$ relative to the identification of its negative boundary. We will do this in Figure~\ref{fig:P-handle-decomp} by sliding the $2$-handle attaching curves over the handles of $\partial^-W$. In particular, the key point is that the $\langle0\rangle$-framed surgery curve in $\partial^- W$ can serve as a helper circle and affect crossing changes in $K$. Suppose that the geometric winding number of $K$ about the $1$-handle is strictly greater than the (algebraic) winding number, which is 1. Then there must be a \emph{returning arc}, i.e.  an arc that starts near the dotted circle, enters the circle marked $P$, and turns around. Using the helper circle, unlink the returning arc from the rest of $P$ by crossing changes. Then the returning arc can be isotoped out of the tangle $P$, and off the $1$-handle, decreasing the geometric winding number by two. Iterate this process until the geometric winding number equals the winding number.

Now the $2$-handle has geometric winding number one around the $1$-handle, and thus these handles cancel each other. This leaves a single unknotted circle decorated with $\langle 0\rangle$ and so $(W,\bdry^- W)$ is diffeomorphic to $(S^1\times S^2\times I, S^1\times S^2\times \{0\})$.
\end{proof}

\noindent We now have all the ingredients for a proof of Theorem~\ref{thm:thmA}.

\begin{proof}[Proof of Theorem~\ref{thm:thmA}]
Consider the cobordism~$W$ from Construction~\ref{const:bordism}. Note that we wish to build a concordance from a given winding number $1$ knot $K$ to $H$; thus, it is not sufficient to work in the cobordism $W$ even though it is diffeomorphic to $S^1\times S^2\times I$ -- we must work in $S^1\times S^2\times I$ itself. In $W$ we have a cylinder cobounded by $K'$ and $H$, and from Lemma~\ref{lem:modify-knot} we have an orientation preserving diffeomorphism $\psi\colon \partial^+W=M\rightarrow S^1\times S^2$ such that $\psi(K')=K$. Let $g$ denote the composition $\psi \circ \phi\big|_{S^1\times S^2\times \{1\}}$, where $\phi$ is the orientation preserving diffeomorphism from Lemma~\ref{lem:handle-decomp}. Then $g\colon S^1\times S^2\rightarrow S^1\times S^2$ is an orientation preserving diffeomorphism, and in $S^1\times S^2\times I$ we have a cylinder cobounded by $g^{-1}(K)$ and $H$. Apply $g\times \Id_{I}$ to $S^1\times S^2\times I$. This yields a cylinder in $S^1\times S^2\times I$ cobounded by $K$ and $g(H)$. We will now show that $g(H)$ is isotopic to the Hopf knot, which will complete the proof.

First we note that $g$ induces the identity homomorphism on $\pi_1(S^1\times S^2)$, as follows. The class $[H]$ is a generator of $\pi_1(S^1 \times S^2)$. Since $H$ and $g^{-1}(K)$ cobound an annulus in $S^1\times S^2\times I$, $[g^{-1}(K)]=[H]$ in homotopy, i.e.\ $g([H])=[K]$. Since $K$ is assumed to be winding number $1$, $[K]=[H]$, as needed. Now we invoke a result of Gluck~\cite{Gluck62}, which states that the mapping class group $\pi_0(\Diff(S^1 \times S^2))$ of $S^1\times S^2$ is isomorphic to $\Z/2\oplus \Z/2\oplus \Z/2$, where the first summand is generated by the diffeomorphism which reverses the orientation on $S^1$; the second is generated by the diffeomorphism which reverses the orientation on $S^2$; and the third is generated by the Gluck twist
\begin{align*}
\gamma \colon S^1 \times S^2 &\to S^1 \times S^2\\
(t, z) &\mapsto (t, p_t(z)),
\end{align*}
where $p_t \in \pi_1(\SO(3)) \cong \Z/2$ is the non-trivial element. Since $g$ preserves orientation and acts by the identity on $\pi_1(S^1\times S^2)$, it must lie in the third $\Z/2$ summand, i.e.\  it is isotopic either to the identity map or to the Gluck twist. Either of these diffeomorphisms preserve the isotopy class of the Hopf knot.
\end{proof}

We further extend our results to knots in $3$-manifolds that bound homology $S^1\times D^3$'s.
\begin{proposition}\label{prop:knots-in-homology-S1xS2}
Let $w$ be a non-zero integer and $R\supset \Z$ be a ring extension in which $w$ is a unit. Let $M$ be a closed oriented $3$-manifold with $H_1(M;R) \cong R$. Suppose that $M$ bounds a compact oriented $4$-manifold~$W$
such that $H_1(M;R)\to H_1(W;R)$ is an isomorphism and $H_2(W;R)=0$.
Let $K$ be a knot in $M$ representing $w$ times the generator of $H_1(M)$. Then $K$ is $R$-cobordant to the Hopf knot.
\end{proposition}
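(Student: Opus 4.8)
The plan is to produce the $R$-homology concordance geometrically, by drilling a copy of $K$ out of the filling $W$.

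\emph{The construction.} First I would push $K$ off $M=\partial W$ into $\mathrm{int}(W)$ along a collar, obtaining a knot $K'\subset\mathrm{int}(W)$ isotopic to $K$. Its normal bundle is trivial (an $\SO(3)$-bundle over $S^1$), so it has a tubular neighborhood $\nu\cong S^1\times D^3$ with core $K'$. Set $W':=W\setminus\mathrm{int}(\nu)$; then $\partial W'=M\sqcup\partial\nu$ with $\partial\nu\cong S^1\times S^2$, so $W'$ is a cobordism from $M$ to $S^1\times S^2$. The trace of the push-off is an annulus from $K\subset M$ to $K'$; intersecting it with $W'$ gives a smooth properly embedded annulus $A\subset W'$ from $K\subset M$ to a longitude $S^1\times\{\mathrm{pt}\}$ of $K'$ on $\partial\nu=S^1\times S^2$, that is, to the Hopf knot. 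So once we know $W'$ (with suitable identifications of its boundary components) is an $R$-homology cobordism, the annulus $A$ witnesses that $K$ is $R$-concordant to $H$ — with the orientation and boundary bookkeeping handled in the usual way, invoking Remark~\ref{rem:reverse} and transitivity of $R$-concordance if necessary.

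\emph{The homological check}, which is where the hypotheses enter. A routine Poincar\'e–Lefschetz duality argument first shows that $W$ has the $R$-homology of $S^1$: $H_2(W;R)=0$ and $H_1(W;R)\cong R$ are assumed, $H_4(W;R)=0$ since $\partial W\neq\emptyset$, and $H_3(W;R)\cong H^1(W,M;R)=0$ since $H_0(W,M;R)=H_1(W,M;R)=0$ (the first automatic, the second from the assumption on $H_1$). Now the core $K'$ of $\nu$ represents $w$ times a generator of $H_1(W;R)$, and since $w$ is a unit, $[K']$ \emph{generates} $H_1(W;R)$; hence the inclusion $\nu\hookrightarrow W$ is an $R$-homology isomorphism, so $H_*(W,\nu;R)=0$, and by excision $H_*(W',\partial\nu;R)=0$. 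Thus $\partial\nu=S^1\times S^2\hookrightarrow W'$ is an $R$-homology equivalence. For the other boundary component: the $R$-chain complex of $(W',\partial\nu)$ is then a bounded acyclic complex of finitely generated free $R$-modules, hence contractible, so $H^*(W',\partial\nu;R)=0$ as well; by Poincar\'e–Lefschetz duality $H_k(W',M;R)\cong H^{4-k}(W',\partial\nu;R)=0$, so $M\hookrightarrow W'$ is also an $R$-homology equivalence. Therefore $W'$ is an $R$-homology cobordism from $M$ to $S^1\times S^2$, which completes the argument.

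\emph{Expected difficulty.} The geometry (drilling, and reading off $A$ and its two ends) is routine. The real content lies in the homological calculation, and within it the single load-bearing point is that $[K]$ generates $H_1(W;R)$ — exactly what forces $w$ to be a unit in $R$, echoing the role of the chain map $C_2=R\xrightarrow{\times w}C_1=R$ in Proposition~\ref{prop:concordance-to-modified-knot}. A secondary, purely technical point is that $R$ is an arbitrary commutative ring rather than a PID; this is navigated by combining Poincar\'e–Lefschetz duality with the fact that a bounded acyclic complex of projectives is contractible, so no universal coefficient theorem is needed.
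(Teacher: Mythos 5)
Your proof is correct, but it takes a genuinely different route from the paper's. The paper builds the cobordism \emph{externally}: it forms the boundary connected sum of $S^1\times S^2\times I$ with $W$ inside $S^1\times S^2\times\{1\}$, then attaches a $2$-handle along a band sum of $K$ with the Hopf knot (with any framing); a slam-dunk shows the new positive boundary is $M$, the concordance is the product annulus $H\times I$, and a handle slide over the attaching curve identifies $H\times\{1\}$ with $K$. The homological verification is then deferred to the relative handle chain complex $R\xrightarrow{\times w}R$ exactly as in Proposition~\ref{prop:concordance-to-modified-knot}. You instead work \emph{internally}, drilling a pushed-in copy of $K$ out of $W$ and taking the trace of the push-off as the annulus; this avoids Kirby calculus altogether and makes the role of each hypothesis transparent ($H_2(W;R)=0$ and the $H_1$ condition force $W$ to be an $R$-homology $S^1$, and invertibility of $w$ is precisely what makes $[K]$ a generator of $H_1(W;R)$ so that the drilled complement is an $R$-homology cobordism to $\partial\nu\cong S^1\times S^2$). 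Your verification is also more scrupulous than the paper's about the fact that $R$ is an arbitrary ring rather than a PID, replacing the universal coefficient theorem by contractibility of bounded acyclic complexes of projectives; the paper's construction, by contrast, is more explicit about the isotopy identifying the far end of the annulus with $K$ and is stylistically uniform with the proof of Theorem~\ref{thm:thmA}. The only points you leave implicit --- the choice of trivialization of $\nu$ (ambiguous up to a Gluck twist, which preserves the Hopf knot) and the orientation bookkeeping via Remark~\ref{rem:reverse} --- are harmless.
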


We note that while the requirement that $M$ bound such a $4$-manifold might seem excessive at first glance, without this assumption there would exist no $R$-homology cobordism from $M$ to $S^1\times S^2$.

\begin{proof}
We build the cobordism as follows. Perform a boundary connected sum of $S^1\times S^2\times I$ with $W$ within $S^1\times S^2\times \{1\}$. Next,
add a $2$-handle to the new boundary component $S^1\times S^2\# M$ along a curve $P$ which first follows $K$ and then the Hopf knot, with any framing. On the level of $3$-manifolds the $2$-handle cancels with the added $S^1\times S^2$-factor (perform a slam dunk move); thus, we have built a cobordism from $S^1\times S^2$ to $M$. We can check that this is an $R$-cobordism -- this is very similar to the proof of Proposition~\ref{prop:concordance-to-modified-knot}. In this
cobordism, the cylinder $H\times I$ is cobounded by $H\times\{0\}$ and $H\times \{1\}$, and sliding over the curve $P$ shows that $H\times\{1\}$ is isotopic to $K$.
\end{proof}


\section{Slice knots in $S^1\times S^2$}\label{sec:slice-knots}
Recall that a knot~$K$ in $S^1\times S^2$ is slice (resp.\ topologically slice) if it bounds a smoothly (resp. locally flatly) embedded disk in $D^2\times S^2$. In this section, we prove Theorem \ref{thm:thmB}, that is, we show that there are infinitely many topological concordance classes of slice knots in $S^1\times S^2$ for any fixed winding number $w\neq \pm1$.

\begin{remark}
The reader may note that we could instead have considered knots which bound disks in $S^1\times D^3$ instead. Recall that the standard diagram for $S^1\times S^2$ consists of an unknot $U$ in $S^3$ decorated with a $0$, and a Kirby diagram for $S^1\times D^3$ is obtained by simply removing the $0$ and putting a dot on $U$. Then the question of whether a knot $K$ in $S^1\times S^2$ bounds a disk in $S^1\times D^3$ is really a question about whether the link $K\sqcup U$ is slice in $D^4$, where the slice disk for $U$ is the standard unknotted slice disk, i.e.\ isotopic to $D^2\times\{0\}\subseteq D^2\times D^2\cong D^4$.  Moreover, in order for a knot to be slice in $S^1\times D^3$ it should first be null homotopic, and thus, have winding number zero. We focus on sliceness in $D^2\times S^2$ since it gives us the opportunity to explore different winding numbers.
\end{remark}

\begin{proof}[Proof of Theorem~\ref{thm:thmB}]
For $w=0$, we follow the proof of~\cite[Corollary 8.4 (2)]{FNOP16}, using covering links. We recall their example now. Let $\{J_i\}_{i\geq 1}$ be an infinite family of knots in~$S^3$ such that when $i\neq j$ the knots~$J_i\#rJ_i$ and $J_j\#rJ_j$ are not topologically concordant. For example, we may take $J_i$ to be the connected sum of $i$ copies of the right handed trefoil, in which case the signature detects that $J_i\#rJ_i$ is not topologically concordant to $J_j\#rJ_j$. Consider the winding number zero knot~$K_0(J_i)$ depicted in Figure~\ref{fig:winding-zero-slice-knots}. The depicted band move reduces~$K_0(J_i)$ to two parallel Hopf knots with opposite orientations. Thus $K_0(J_i)$ is slice.
\begin{figure}[htb]
\begin{tikzpicture}
\node at (-3,0) {\includegraphics[width=3.5cm]{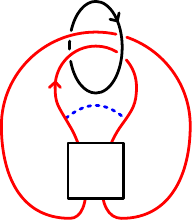}};
\node at (-3.5, 2) {$0$};
\node at (-3, -1.1) {$J$};
\node at (0.7, -1.1) {};
\end{tikzpicture}
\begin{tikzpicture}
\node at (0,0) {\includegraphics[width=3.5cm]{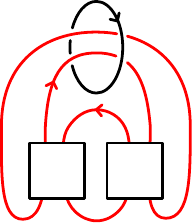}};
\node at (-0.5, 2) {$0$};
\node at (-0.7, -1.1) {$J$};
\node at (0.7, -1.1) {$J$};
\end{tikzpicture}
\caption{Left: For any knot $J$, the depicted band move shows that~$K_0(J)$ is slice.
Right: The $2$-fold covering link~$\widetilde{K_0(J)}$
of $K_0(J)$ is a $2$-component link, each component of which is the distant knot~$\Phi(J\#rJ)$.}\label{fig:winding-zero-slice-knots}
\end{figure}

Suppose for the sake of contradiction that there exists a locally flat concordance $C$ in $S^1\times S^2\times I$ between $K_0(J_i)$ to $K_0(J_j)$, for some $i\neq j$. Lift the knots $K_0(J_i)$ and $K_0(J_j)$ as well as the concordance $C$ to the double cover of $S^1\times S^2\times I$. Conveniently, the double cover of $S^1\times S^2$ is again $S^1\times S^2$. Since $K_0(J_i)$ is winding number zero, it lifts to
a $2$-component link~$\widetilde{K_0(J_i)} = \widetilde{K^1_0(J_i)} \sqcup \widetilde{K^2_0(J_i)}$, shown in Figure~\ref{fig:winding-zero-slice-knots}. Similarly, the concordance~$C$ lifts to two disjoint
annuli~$\widetilde{C} = \widetilde{C_1} \sqcup \widetilde{C_2}$,
where each $\widetilde{C_k}$ is a locally flat concordance between $\widetilde{K^k_0(J_i)}$ and $\widetilde{K^k_0(J_j)}$. Notice that each component~$\widetilde K^k_0(J_i) = \Phi(J_i\#rJ_i)$ is a distant knot, and thus by Theorem~\ref{thm:distant-knots-cobordisms}, it follows that $J_i\#rJ_i$ is topologically concordant to $J_j\#rJ_j$ in $S^3 \times I$ contradicting our assumption to the contrary.

For the remainder of the proof it is sufficient to construct examples with positive winding numbers, since we may obtain examples for negative winding numbers by simply changing orientations. Our examples will all be of the form $K_w(J)$, indicated in the left panel of Figure~\ref{fig:many-slice-knots}, for some choice of knots $J$ which we will describe soon. We see that these knots are all slice, for any choice of knots $J$, via the band moves shown in Figure~\ref{fig:many-slice-knots}.
\begin{figure}[htb]
\begin{tikzpicture}
\node at (0,0) {\includegraphics[height=2.1cm]{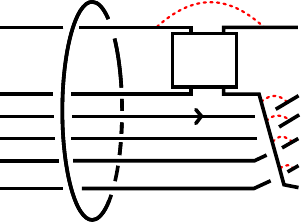}};
\node at (-0.3, 1) {$0$};
\node at (0.54, 0.5) {$J$};
\node at (6,0) {\includegraphics[height=2.1cm]{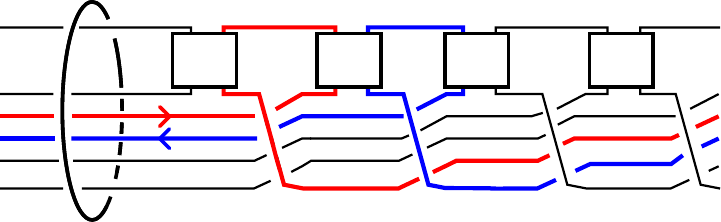}};
\node at (-0.3 + 3.5, 1) {$0$};
\node at (4.55, 0.5) {$J$};
\node at (5.9, 0.5) {$J$};
\node at (8.5, 0.5) {$J$};
\node at (7.1, 0.5) {$J$};
\end{tikzpicture}
\caption{Left: The knot $K_4(J)$ in $S^1\times S^2$ is shown, with winding number $4$. The indicated band moves show that $K_4(J)$ is slice. Right:  The $4$-fold cover
of $K_4(J)$, $\widetilde{K_4(J)}$, is a $4$-component link.
Two components of this link are highlighted.}\label{fig:many-slice-knots}
\end{figure}

First, let $w=2$. Let $\{J_i\}_{i\geq 1}$ be an infinite family of knots in $S^3$ such that if $i\neq j$ the knots~$(J_i \# rJ_i)_{2,1}$ and $(J_j \# rJ_j)_{2,1}$ do not cobound a locally flat orientable surface in $S^3 \times I$ with genus three (the latter knots are $(2,1)$ cables, with longitudinal winding two, similar to our notation from Figure~\ref{fig:Hopf-knot-cable}.) For example, we may take $J_i$ to be the connected sum of $2i$ copies of the right handed trefoil, in which case the Levine-Tristram signature function detects that $(J_i \# rJ_i)_{2,1}$ and $(J_j \# rJ_j)_{2,1}$ do not cobound a locally flat genus three orientable surface in $S^3 \times I$. Suppose for the sake of contradiction that there exists a locally flat concordance~$C$ between $K_2(J_i)$ and $K_2(J_j)$ in $S^1\times S^2\times I$ for some $i\neq j$. Let $\widetilde{K_2(J_i)}$ and $\widetilde{K_2(J_j)}$ be the preimages of $K_2(J_i)$ and $K_2(J_j)$ respectively in the $2$-fold cover of $S^1\times S^2$. Orient the two components of the covering links and perform band sums as shown in Figures~\ref{fig:2-many-slice-knots} and~\ref{fig:2-many-slice-knots-2}. This shows that there exist locally flat orientable genus one cobordisms from $\widetilde{K_2(J_i)}$ to $\Phi((J_i \# rJ_i)_{2,1})$ and from $\widetilde{K_2(J_j)}$ to $\Phi((J_j \# rJ_j)_{2,1})$. Glue these onto $C$ to obtain a genus three cobordism between $\Phi((J_i \# rJ_i)_{2,1})$ and $\Phi((J_j \# rJ_j)_{2,1})$. By Theorem~\ref{thm:distant-knots-cobordisms}, this implies that $(J_i \# rJ_i)_{2,1}$ and  $(J_j \# rJ_j)_{2,1}$ cobound a locally flat genus three surface in $S^3\times I$, which is a contradiction.
\begin{figure}[htb]
\begin{tikzpicture}
\node at (0,0) {\includegraphics[height=2.1cm]{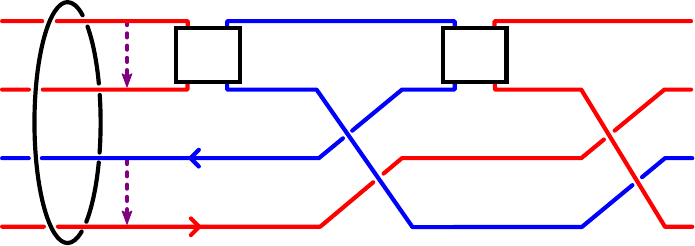}};
\node at (-1.2, 0.6) {$J$};
\node at (1.1, 0.6) {$J$};
\node at (-2.1, 1.1) {$0$};
\node at (6.5,0) {\includegraphics[height=2.1cm]{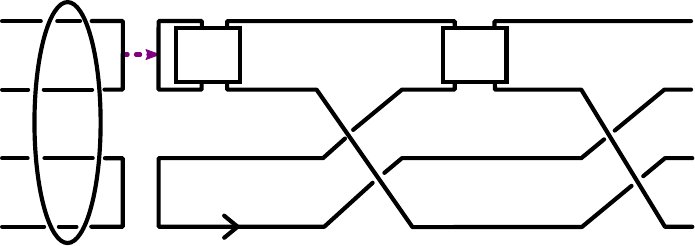}};
\node at (4.4, 1.1) {$0$};

\node at (5.32, 0.6) {$J$};
\node at (7.6, 0.6) {$J$};
\end{tikzpicture}
\caption{Left: The $2$-fold cover
of $K_2(J)$, $\widetilde{K_2(J)}$ is a $2$-component link. Right: The result of a band move from the left.}\label{fig:2-many-slice-knots}
\end{figure}
\begin{figure}[htb]
\begin{tikzpicture}
\node at (0,0) {\includegraphics[width=5.5cm]{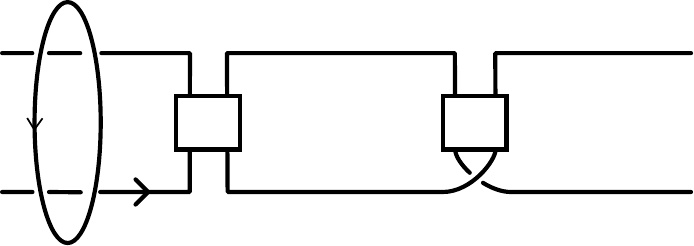}};
\node at (-2, 1) {$0$};
\node at (1.02, 0.01) {$J$};
\node at (-1.1, 0.01) {$J$};
\node at (6,0) {\includegraphics[width=5.5cm]{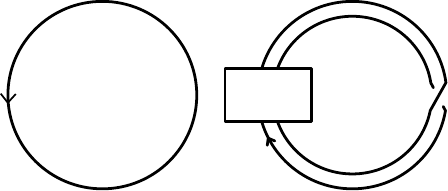}};
\node at (5.4, 1) {$0$};
\node at (6.53, 0) {$J\#rJ$};
\end{tikzpicture}
\caption{Left: The result of another band sum from the right panel of Figure~\ref{fig:2-many-slice-knots}.
Right: An isotopy reduces this knot to a distant knot.}
\label{fig:2-many-slice-knots-2}
\end{figure}

It remains to address the case of $w>2$. Let $\{J_i\}$ be an infinite family of knots in $S^3$ such that if $i\neq j$ the knots~$J_i\#J_i\#(rJ_i)_{2,1}$ and $J_j\#J_j\#(rJ_j)_{2,1}$ do not cobound a locally flat genus one orientable surface in $S^3 \times I$. Again, one might take $J_i$ to be the connected sum of $i$ copies of the right handed trefoil, in which case the signature detects that $J_i\#J_i\#(rJ_i)_{2,1}$ and $J_j\#J_j\#(rJ_j)_{2,1}$ do not cobound a locally flat genus one orientable surface in $S^3 \times I$.
\begin{figure}[htb]
\begin{tikzpicture}
\node at (0,0) {\includegraphics[width=5.5cm]{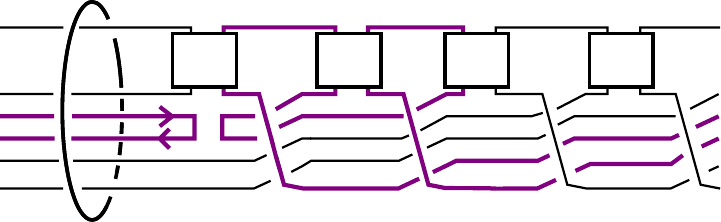}};
\node at (-1.9, 0.8) {$0$};

\node at (-1.2, 0.4) {$J$};
\node at (-.1, 0.4) {$J$};
\node at (.9, 0.4) {$J$};
\node at (2, 0.4) {$J$};

\node at (6,0) {\includegraphics[width=5.5cm]{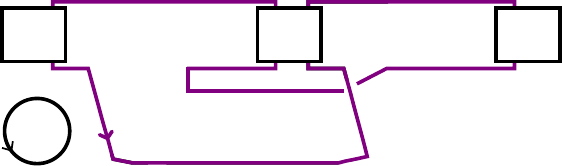}};
\node at (3.8, -0.04) {$0$};
\node at (3.6, 0.46) {$J$};
\node at (6.1, 0.46) {$J$};
\node at (8.4, 0.46) {$J$};
\end{tikzpicture}
\caption{Left: Band summing two components of $\widetilde{K_4(J)}$.
Right: an isotopy reduces this knot to a distant knot.}
\label{fig:many-slice-knots-2}
\end{figure}
Suppose for the sake of contradiction that there exists a locally flat concordance~$C$ between $K_w(J_i)$ and $K_w(J_j)$ in $S^1\times S^2\times I$. Since $K_w(J_i)$ is winding number~$w$, the lift~$\widetilde{K_w(J_i)}$ to the $w$-fold cover of $S^1\times S^2$  has $w$ components (see Figure~\ref{fig:many-slice-knots}).
By lifting the concordance~$C$ to the $w$-fold cover,
we see $w$ disjoint locally flat concordances between the components of~$\widetilde{K_w(J_i)}$ and the components of $\widetilde{K_w(J_j)}$.

Now, if we band together two components of $\widetilde{K_w(J_i)}$ as shown in Figure~\ref{fig:many-slice-knots-2} and do the same with two components of $\widetilde{K_w(J_j)}$ and ignore all other components, we see that the resulting knots cobound a locally flat genus one orientable surface.
Indeed the resulting knots are the distant knots $\Phi(J_i\#J_i\#(rJ_i)_{2,1})$ and
$\Phi(J_j\#J_j\#(rJ_j)_{2,1})$ respectively. By Theorem~\ref{thm:distant-knots-cobordisms}, $J_i\#J_i\#(rJ_i)_{2,1}$
and $J_j\#J_j\#(rJ_j)_{2,1}$ also cobound a locally flat genus one orientable surface in $S^3\times I$, which is a contradiction. This completes the proof.
\end{proof}


\section{Nonslice knots with even winding number.}\label{sec:nonslice-even}

This section gives the proof of Theorem~\ref{thm:thmC} for knots with even winding number. We address knots with odd winding numbers in the next section with quite different techniques.

We first recall a result of Murakami and Yasuhara~\cite{MY00} giving a lower bound on $\gamma^{\top}_4(J)$ for knots $J$ in $S^3$ (cf.~\cite{GL11}). For a given knot $J$, let $\Sigma(J)$ be the double branched cover of $S^3$ branched along $J$ and let $(H_1(\Sigma(J)),\lk)$ denote the linking form on $\Sigma(J)$.

\begin{corollary}[{\cite[Corollary~$2.6$]{MY00}}] \label{corollary:MY} Suppose that $H_1(\Sigma(J)) = \mathbb{Z}_n$ where $n$ is a product of primes with odd exponent. If $\gamma^{\top}_4(J)=1$, there is a generator $a \in H_1(\Sigma(J))$ such that $\lk(a,a) = \pm\frac{1}{n}$.
\end{corollary}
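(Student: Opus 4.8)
The plan is to convert the hypothesis $\gamma^{\top}_4(J)=1$ into a rank-one $4$-manifold and read the linking form of $\Sigma(J)$ off its intersection form. First I would use $\gamma^{\top}_4(J)=1$ to fix a properly, locally flatly embedded non-orientable surface $F\subset D^4$ with $\partial F=J$ and first Betti number $b_1(F)=1$ (a locally flat M\"obius band, i.e.\ a once-punctured $\mathbb{RP}^2$). Since $F$ is locally flat it has a tubular neighbourhood (Freedman--Quinn), so the double cover $W\to D^4$ branched along $F$ is defined; this uses that $H_1(D^4\setminus F;\Z/2)\cong\Z/2$ with the meridian as generator, and $\partial W$ is the double branched cover $\Sigma(J)$.

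Next I would carry out the homology computation $H_1(W;\Q)=0$ and $b_2(W)=b_1(F)=1$. The cleanest route is a Mayer--Vietoris argument decomposing $W$ into the induced double cover of $D^4\setminus\nu(F)$ and a (twisted) disk-bundle neighbourhood of the branch locus, using that $D^4\setminus F$ is a rational homology ball. Consequently the intersection form $Q_W$ on $H_2(W)/\mathrm{tors}\cong\Z$ is a rank-one symmetric form $(d)$ with $d\ne 0$; nondegeneracy over $\Q$ holds because $\partial W=\Sigma(J)$ is a rational homology sphere.

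Finally I would invoke the standard relationship between the intersection form of a compact oriented $4$-manifold and the linking form on its boundary: when $H_1(W;\Q)=0$ and $\partial W$ is a $\Q$-homology sphere, $(H_1(\partial W),\lk)$ is isometric to the orthogonal sum of the linking form presented by the integer matrix $Q_W$ and a metabolic linking form whose underlying group has order $|\mathrm{Tors}\,H_1(W)|^2$. Here the first summand is the form $a\mapsto\pm\tfrac{1}{d}$ on a generator of $\Z_{|d|}$. The arithmetic hypothesis now does the decisive work: $H_1(\Sigma(J))=\Z_n$ is cyclic, and since every prime divides $n$ to an odd power, $\Z_n$ has no nontrivial direct summand of perfect-square order; hence the metabolic summand is trivial, $|d|=n$, and $(H_1(\Sigma(J)),\lk)$ is given by $\lk(a,a)=\pm\tfrac{1}{n}$ on a generator $a$, as claimed.

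The step I expect to be the main obstacle is the linking-form input in the last paragraph: pinning down precisely that the contribution of $\mathrm{Tors}\,H_1(W)$ to $(H_1(\partial W),\lk)$ is a metabolic summand of square order. This must be argued via Poincar\'e--Lefschetz duality and the universal coefficient theorem rather than quoted loosely, and it is exactly the place where the hypothesis on $n$ is used. By comparison, the branched-cover construction (once one has locally flat tubular neighbourhoods) and the Betti-number computation are routine bookkeeping.
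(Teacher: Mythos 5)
First, a point of order: the paper does not prove this statement at all --- it is imported verbatim from Murakami--Yasuhara \cite[Corollary~2.6]{MY00} and used as a black box, so there is no in-paper argument to compare yours against. That said, your overall strategy is certainly the standard route to such a result and is in the spirit of the source: pass to the double cover $W\to D^4$ branched over the locally flat M\"obius band (locally flat normal bundles make this legitimate), compute $H_1(W;\Q)=0$ and $b_2(W)=b_1(F)=1$, and try to read the linking form of $\Sigma(J)=\partial W$ off the rank-one intersection form $Q_W$. Up to that point the proposal is fine.

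The genuine gap is exactly at the step you flag, and it is not merely a matter of writing out duality carefully. What Poincar\'e--Lefschetz duality and universal coefficients actually give, from the exact sequence $H_2(W)\to H_2(W,\partial W)\to H_1(\partial W)\to H_1(W)\to 0$ together with $H_2(W,\partial W)\cong H^2(W)\cong \Z^{b_2}\oplus\operatorname{Ext}(H_1(W),\Z)$, is a \emph{subquotient} statement: there is a subgroup $G\le H_1(\partial W)$ with $|G|=|\operatorname{Tors}H_1(W)|$ on which $\lk$ vanishes, with $G\subseteq G^{\perp}$ and $G^{\perp}/G$ carrying the form presented by $Q_W$. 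This is a Witt-class relation, not the orthogonal splitting $\lk\cong\lk_{Q_W}\oplus(\text{metabolic})$ that you assert, and the two are not interchangeable here. With only the subquotient statement your arithmetic does not close: take $n=p^{3}$, which satisfies the hypothesis (every exponent odd). The cyclic form $s/n$ on $\Z_n$ admits a nontrivial self-annihilating subgroup $G$ of order $p$, and knowing that the induced form on $G^{\perp}/G\cong\Z_{p}$ is $\pm 1/p$ only tells you $s\equiv\pm1\pmod{p}$, not $s\equiv\pm1\pmod{p^{3}}$; the claimed conclusion $\lk(a,a)=\pm1/n$ on a generator of the full group does not follow. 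To repair this you must either establish the genuine orthogonal splitting (which does not follow from duality alone and requires knowing much more about how $\operatorname{Tors}H_1(W)$ sits in the exact sequence), or directly control $\operatorname{Tors}H_1(W)$ for the branched double cover of $D^4$ over a M\"obius band --- for instance, showing it vanishes, or vanishes away from the prime $2$ with a separate argument at $2$. That extra control is precisely what the cited theorem of \cite{MY00} packages, and it is the content your proposal is missing.
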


As an application of the above, Murakami and Yasuhara show that the figure-eight knot~$4_1$ does not bound a M\"{o}bius band in $D^4$. Note that the obstruction only depends on the linking form of the double branched cover. Therefore, we can immediately find an infinite family of pairwise topologically non-concordant knots in $S^3$ that do not bound M\"{o}bius bands in $B^4$ by simply connect summing $i$ copies of a knot with determinant one and non-trivial signature to the figure-eight knot (recall that the determinant of a knot is the order of the first homology group of its double branched cover).

\begin{theorem}\label{thm:non-slice-even}
For any $w \in 2\mathbb{Z}$, there exists an infinite family of winding number $w$ knots, none of which is topologically slice, and which are pairwise distinct up to topological concordance.
\end{theorem}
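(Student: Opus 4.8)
The plan is to reduce everything to knots in $S^3$: for non-sliceness we pass to a double branched cover of $D^2\times S^2$ along a hypothetical slice disk — a cover that exists precisely because $w$ is even — and for non-concordance we lift to a cyclic cover and band components exactly as in the proof of Theorem~\ref{thm:thmB}. For the input I would fix, in $S^3$, the family $G_i := 4_1 \mathbin{\#} \#^i T$ where $T$ is the right-handed trefoil (or any determinant-one knot with $\sigma(T)\neq 0$). Then $H_1(\Sigma(G_i))\cong\Z/5$ carries the linking form of $\Sigma(4_1)$, for which $\lk(a,a)\in\{0,\pm\tfrac{2}{5}\}$ for every $a$; since $5$ is prime, Corollary~\ref{corollary:MY} gives $\gamma^{\top}_4(G_i)\geq 2$, so $G_i$ is not topologically slice. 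The signatures $\sigma(G_i) = \sigma(4_1)+i\,\sigma(T) = -2i$ are pairwise distinct, so the $G_i$ are pairwise topologically non-concordant and moreover no two of them cobound a locally flat orientable surface of any fixed genus in $S^3\times I$. For $w=0$ the theorem is immediate: take $K_i:=\Phi(G_i)$; by Corollary~\ref{cor:slice-distant-knots} we have $g^{\top}_4(\Phi(G_i)) = g^{\top}_4(G_i) > 0$, so $K_i$ is not topologically slice, and a topological concordance $K_i\to K_j$ in $S^1\times S^2\times I$ would by Theorem~\ref{thm:distant-knots-cobordisms} give one between $G_i$ and $G_j$ in $S^3\times I$, contradicting distinctness of signatures. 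So I may assume $w\neq 0$ and even.

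For the construction, fix a winding number $w$ knot $K_0\subset S^1\times S^2$ — for concreteness the slice knot obtained by banding $|w|$ parallel copies of the Hopf knot as in Figure~\ref{fig:Hopf-knot-cable} — chosen so that its $2$-fold cyclic lift $\widetilde{K_0}$, after a band sum and a reorientation of one component, becomes a distant knot (the argument of Figures~\ref{fig:2-many-slice-knots}--\ref{fig:2-many-slice-knots-2} shows this for the knot just described). Set $K_i:=\Phi(G_i)\mathbin{\#}K_0$, obtained by tying $G_i$ into a strand of $K_0$ inside a small ball in which $K_0$ is a trivial arc; then $w(K_i) = w$, and since double branched covers are additive under connected sum, $\Sigma_2(S^1\times S^2,K_i) = \Sigma(G_i)\mathbin{\#}N$ for the fixed $3$-manifold $N:=\Sigma_2(S^1\times S^2,K_0)$. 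Pairwise non-concordance of the $K_i$ then follows as in Theorem~\ref{thm:thmB}: a topological concordance $K_i\to K_j$ in $S^1\times S^2\times I$ lifts to the $2$-fold cyclic cover to give two disjoint concordances between the components of $\widetilde{K_i}$ and of $\widetilde{K_j}$; banding the two components of each (reorienting one) produces distant knots of the form $\Phi(G_i\mathbin{\#}rG_i\mathbin{\#}G_0)$ for a fixed $G_0\subset S^3$, together with a concordance between the $i$- and $j$-versions in $S^1\times S^2\times I$; by Theorem~\ref{thm:distant-knots-cobordisms} this yields a concordance in $S^3\times I$, contradicting the fact that $\sigma(G_i\mathbin{\#}rG_i) = 2\sigma(G_i) = -4i$ are pairwise distinct. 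Negative $w$ follows by reversing orientations.

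The heart of the argument is non-sliceness. Suppose $K_i$ bounds a locally flat disk $D$ in $D^2\times S^2$. A Mayer--Vietoris computation gives $H_1(D^2\times S^2\setminus\nu D)\cong\Z/w$, and here — exactly because $w$ is even — the meridian of $D$ admits a surjection onto $\Z/2$, so there is a double branched cover $W:=\Sigma_2(D^2\times S^2,D)$ with $\partial W = \Sigma_2(S^1\times S^2,K_i) = \Sigma(G_i)\mathbin{\#}N$. One computes $\chi(W) = 2\chi(D^2\times S^2)-\chi(D) = 3$ and (as for double branched covers of $S^3$ along a knot, since $D$ is a disk and $H_1(D^2\times S^2\setminus\nu D)$ is finite) $H_1(W;\Q) = 0$, whence $b_2(W) = 2$; meanwhile the $G$-signature theorem gives $\sigma(W) = c + \sigma(G_i)$ for a constant $c$ depending only on $K_0$ and $N$, so taking $i$ large yields $|\sigma(W)| > b_2(W)$, a contradiction. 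Equivalently — and this is the formulation that is insensitive to the exact value of $c$ and to torsion in $H_1(W)$ — one runs the Murakami--Yasuhara argument of Corollary~\ref{corollary:MY} with $W$ in place of $D^4$ on the order-$5$ summand of $H_1(\partial W) = \Z/5\oplus H_1(N)$, producing a generator of $\Z/5$ with self-linking $\pm\tfrac15$, impossible for the linking form of $\Sigma(4_1)$. Hence no such $D$ exists.

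The step I expect to be the main obstacle is precisely this last one: one must verify rigorously that $\partial W = \Sigma(G_i)\mathbin{\#}N$, and control $b_2(W)$, the torsion in $H_1(W)$, and the signature defect of $\partial W$ accurately enough that the contradiction survives — in particular choosing $K_0$ so that neither $N$ nor $H_1(W)$ contributes spurious $5$-primary data, and checking that the topological $G$-signature/linking-form machinery (Freedman--Quinn transversality for the disk, the branched-cover splitting over a connected sum) really applies in the locally flat category. Getting this bookkeeping exactly right, rather than any genuinely new topology, is where the work lies.
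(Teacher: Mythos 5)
Your $w=0$ case and your pairwise non-concordance argument (lift to a cyclic cover, band two components into a distant knot of the form $\Phi(G_i\# rG_i\#\,\cdot\,)$, apply Theorem~\ref{thm:distant-knots-cobordisms}, and separate with signatures) match the paper's proof in all essentials. The divergence, and the gap, is in the non-sliceness step. The paper does not take a branched double cover of $D^2\times S^2$ at all: it observes that the knot $K_w(J)$ of Figure~\ref{fig:nonsliceevenslice} is carried by a \emph{single non-orientable band move} to the distant knot $\Phi(J)$, so a locally flat slice disk for $K_w(J)$ in $D^2\times S^2$ glues to a M\"obius band, giving $\gamma^{\top}_4(\Phi(J))\le 1$, hence $\gamma^{\top}_4(J)\le 1$ by Corollary~\ref{cor:slice-distant-knots} --- contradicting $\gamma^{\top}_4(J)>1$, which is exactly what Corollary~\ref{corollary:MY} supplies for $J=4_1\#(\#^i 10_{124})$. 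All the four-dimensional linking-form analysis is thereby outsourced to the already-proved statement in $D^4$. Your route keeps that analysis in $D^2\times S^2$, and the part you yourself flag as ``where the work lies'' is precisely the part that is not done and that contains an error as written.

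Concretely: the claim that the $G$-signature theorem gives $\sigma(W)=c+\sigma(G_i)$ is not correct. For a double cover of $D^2\times S^2$ branched over a locally flat disk $D$, the $G$-signature theorem computes $\sigma(W)$ from $\sigma(D^2\times S^2)=0$ and the self-intersection $[D]^2$, which is determined by the homology class of $D$ (equivalently, by $w$ and the induced framing as in Proposition~\ref{prop:framingrelationship}) and sees nothing of $G_i$. To bring $\sigma(G_i)$ into play you would have to cap $\partial W$ with a standard filling of $\Sigma(G_i)\# N$ (e.g.\ the branched cover of a pushed-in Seifert surface) and use Novikov additivity --- i.e.\ essentially re-derive Murakami--Yasuhara's theorem over $W$ rather than $D^4$. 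That re-derivation must control $b_2(W)$ (your $b_2=2$ needs $b_3(W)=0$, which is unverified), the torsion of $H_1(W)$, and --- a point you gloss over --- the fact that $\partial W$ is the branched cover of $(S^1\times S^2,K_i)$ with respect to the character \emph{induced by the disk}, which by Proposition~\ref{prop:framingrelationship} sends $[H]\mapsto\lambda\bmod 2$ and hence depends on the unknown integer $\lambda$; so ``$N$'' is not a single fixed manifold. Finally, a smaller slip: the right-handed trefoil has determinant $3$, not $1$, so with that choice $H_1(\Sigma(G_i))\cong\Z/5\oplus(\Z/3)^i$ is not cyclic and Corollary~\ref{corollary:MY} as quoted does not apply; you need a genuinely determinant-one summand such as $10_{124}$, as the paper uses. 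In short, your outline is a plausible Casson--Gordon-style alternative (in the spirit of the paper's Section~\ref{sec:nonslice-odd}), but the non-sliceness step is not established, whereas the paper's non-orientable band move disposes of it in one line.
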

\begin{proof} When $w = 0$, let $\{J_i\}_{i\geq 1}$ be an infinite family of pairwise topologically non-concordant knots in $S^3$ that are not topologically slice. Then the corresponding distant knots $\{\Phi(J_i)\}$ satisfy the desired properties by Theorem~\ref{thm:distant-knots-cobordisms} and Corollary~\ref{cor:slice-distant-knots}.

For $w>0$, let $\{J_i\}_{i\geq 1}$ be an infinite family of knots in $S^3$ such that $\gamma^{\top}_4(J_i) > 1$ and $J_i \#rJ_i$ and $J_j \# rJ_j$ do not cobound a genus $1$ locally flat orientable surface in $S^3 \times I$. As we noted above, this can be achieved by taking the connect sum of $i$ copies of a knot with determinant one and signature at least $2$ (e.g. $10_{124}$) with the figure-eight knot.

\begin{figure}[htb]
\begin{tikzpicture}
\node at (0,0) {\includegraphics[width=5.5cm]{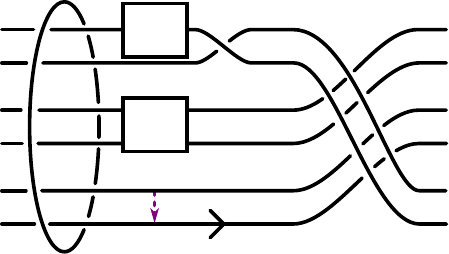}};
\node at (-1.6, 1.4) {$0$};
\node at (-0.85, 1.2) {$J$};
\node at (-0.85, 0.01) {$\frac{2-w}{2}$};
\node at (6,0) {\includegraphics[width=5.5cm]{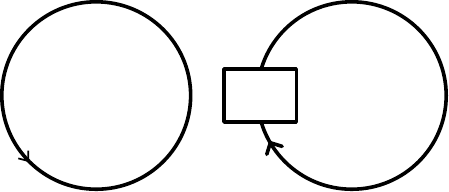}};
\node at (5.4, 1) {$0$};
\node at (6.44, 0) {$J$};
\end{tikzpicture}
\caption{Left: A winding number $w \in 2\mathbb{Z}$ knot $K_w(J)$ in $S^1\times S^2$. The specific case $w=6$ is shown. The box containing $\frac{2-w}{2}$ indicates $\frac{2-w}{2}$ full right-handed twists.  Right: The distant knot $\Phi(J)$ obtained from $K_w(J)$ by the non-orientable band sum shown on the left.}
\label{fig:nonsliceevenslice}
\end{figure}
Consider the knot $K_w(J_i)$, with even winding number $w>0$, described in Figure~\ref{fig:nonsliceevenslice}. The suggested non-orientable band move on $K_w(J_i)$ yields the distant knot $\Phi(J_i)$. If we assume that $K_w(J_i)$ is topologically slice then $\gamma^{\top}_4(\Phi(J_i)) \leq 1$. Thus, by Corollary~\ref{cor:slice-distant-knots}, $\gamma^{\top}_4(J_i) \leq 1$, which is a contradiction. Hence $K_w(J_i)$ is not topologically slice.

Now we prove that $K_w(J_i)$ is not topologically concordant to $K_w(J_j)$, if $i \neq j$. This will be similar to our proof of Theorem~\ref{thm:thmB}. Suppose we have a topological concordance $C$ between the two knots. The preimage of $C$ in the $w$-fold cover of $S^1\times S^2\times I$ is a disjoint union of topological concordances between the components of the $w$-component links $\widetilde{K_w(J_i)}$ and $\widetilde{K_w(J_j)}$, the lifts of $K_w(J_i)$ and $K_w(J_j)$.
\begin{figure}[htb]
\begin{tikzpicture}
\node at (-3.0,0) {\includegraphics[width=10cm]{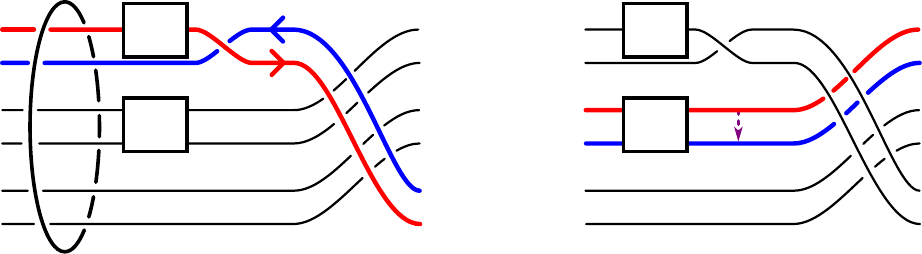}};
\node at (-2.5, 0.9) {$\cdots$};
\node at (-2.5, 0) {$\cdots$};
\node at (-2.5, -0.9) {$\cdots$};
\node at (-0.88, 1.05) {$J$};
\node at (-0.89, 0.01) {$\frac{2-w}{2}$};
\node at (-6.31, 1.05) {$J$};
\node at (-6.32, 0.01) {$\frac{2-w}{2}$};
\node at (-7, 1.3) {$0$};
\end{tikzpicture}
\caption{The $w$-component link $\widetilde{K_w(J)}$ in $S^1\times S^2$ and a band move that transforms a $2$-component sublink into the distant knot $\Phi(J\#rJ)$.}
\label{fig:nonsliceevendistinct}
\end{figure}
Band two adjacent components of $\widetilde{K_w(J_i)}$ as shown in Figure~\ref{fig:nonsliceevendistinct} to get a cobordism to the distant knot $\Phi(J_i\# rJ_i)$. Doing the same to $\widetilde{K_w(J_j)}$ yields a locally flat genus one cobordism between the distant knots $\Phi(J_i\#rJ_i)$ and $\Phi(J_j\#rJ_j)$. By Theorem~\ref{thm:distant-knots-cobordisms} we conclude that $J_i\#rJ_i$ and $J_i\#rJ_i$ cobound a locally flat genus one surface in $S^3 \times I$, which is a contradiction.

For negative even winding numbers, reverse the orientations of the positive winding number examples.
\end{proof}

Note that our technique above relies on finding a cobordism from a given knot to a distant knot. If the given knot has non-zero winding number, it is impossible to find an orientable cobordism to a distant knot, since orientable cobordisms preserve winding number. On the other hand, non-orientable cobordisms preserve the winding number mod $2$, and thus, it is impossible to find a cobordism from a knot with odd winding number to a distant knot. As a result, we need a completely different approach for finding non-slice knots with odd winding number. We do this in the following section.

Additionally, we constructed a family of knots $K_w(J_i)$ such that each $K_w(J_i)$ cobounds a non-orientable surface with non-orientable genus one with the distant knot $\Phi(J_i)$. Therefore, a family of knots $\{J_i\}$ in $S^3$ with arbitrarily large $\gamma_4^{top}$ would yield the family of knots $\{K_w(J_i)\}$ in $S^1\times S^2$ with arbitrarily large $\gamma_4^{top}$, for any even winding number $w$. However, such a family of knots in $S^3$ has not been constructed as far as we know.

However, there exist knots with arbitrarily large smooth non-orientable $4$-genus~\cite{Bat14, OSS15}. Furthermore, it is possible to choose these examples to be topologically slice~\cite{FPR16}. This yields the following families of knots.

\begin{proposition}\label{prop:evencrosscap} For any $w \in 2\mathbb{Z}$, there exists an infinite family of topologically slice knots $\{K_i\}_{i\geq 1}$ in $S^1\times S^2$ with winding number $w$ such that  $2g_4(K_i) \geq \gamma_4(K_i) \geq i$.
\end{proposition}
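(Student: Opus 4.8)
The plan is to feed into the construction of Theorem~\ref{thm:non-slice-even} the topologically slice knots with large non-orientable $4$-genus supplied by~\cite{FPR16}. First I would use~\cite{FPR16} (building on~\cite{Bat14, OSS15}) to fix an infinite family $\{J_i\}_{i\ge1}$ of topologically slice knots in $S^3$ with $\gamma_4(J_i)\ge i+1$ (reindex if necessary). For even $w\ge 2$, set $K_i\coloneqq K_w(J_i)$, the winding number $w$ knot of Figure~\ref{fig:nonsliceevenslice} obtained by tying $J_i$ into the marked strand. For $w=0$ set instead $K_i\coloneqq\Phi(J_i)$, which is topologically slice by Proposition~\ref{prop:distant-knots-slices} and satisfies $\gamma_4(K_i)=\gamma_4(J_i)$ and $g_4(K_i)=g_4(J_i)$ by Corollary~\ref{cor:slice-distant-knots}, finishing that case immediately; for even $w<0$, take the reverses of the examples built for $-w$. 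In every case $w(K_i)=w$.

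Next I would check that each $K_i$ is topologically slice. Taking $J$ to be the unknot $U$ in Figure~\ref{fig:nonsliceevenslice} produces a fixed knot $K_w(U)\subset S^1\times S^2$, and one verifies directly from the surgery diagram that $K_w(U)$ is \emph{smoothly} slice --- by band moves of the type used for Theorem~\ref{thm:thmB} (cf.\ Figure~\ref{fig:Hopf-knot-cable}). Since $K_i$ is obtained from $K_w(U)$ by tying $J_i$ into one of its strands, $K_i=K_w(U)\,\#\,J_i$ with the connected sum performed inside a ball of $S^1\times S^2$. Boundary connect summing a smooth slice disk for $K_w(U)$ in $D^2\times S^2$ with a locally flat slice disk for $J_i$ in $D^4$, and using $D^2\times S^2\,\natural\,D^4\cong D^2\times S^2$, then yields a locally flat slice disk for $K_i$.

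The heart of the argument is the lower bound $\gamma_4(K_i)\ge i$. The non-orientable band move of Figure~\ref{fig:nonsliceevenslice} provides a connected non-orientable cobordism $G\subset S^1\times S^2\times I$ from $K_i$ to the distant knot $\Phi(J_i)$ with $\chi(G)=-1$ (a single band move between two knots yields a one-crosscap cobordism). If $F\subset D^2\times S^2$ is a non-orientable surface with $\partial F=K_i$ realizing the non-orientable $4$-genus, then $\chi(F)=1-\gamma_4(K_i)$; gluing $D^2\times S^2$ to $S^1\times S^2\times I$ along the copy of $S^1\times S^2$ containing $K_i$ recovers $D^2\times S^2$, inside which $F\cup_{K_i}G$ is a non-orientable surface bounded by $\Phi(J_i)$ with $\chi=\chi(F)+\chi(G)=-\gamma_4(K_i)$, hence of non-orientable genus $\gamma_4(K_i)+1$. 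Therefore $\gamma_4(\Phi(J_i))\le\gamma_4(K_i)+1$, and Corollary~\ref{cor:slice-distant-knots} gives $\gamma_4(J_i)\le\gamma_4(K_i)+1$, i.e.\ $\gamma_4(K_i)\ge\gamma_4(J_i)-1\ge i$. The inequality $2g_4(K_i)\ge\gamma_4(K_i)$ is then the standard comparison between the smooth orientable and non-orientable $4$-genus.

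The step I expect to require the most care is the verification that $K_w(U)$ is smoothly slice for every even $w$: this is precisely what upgrades ``$\gamma^{\top}_4(K_i)\le1$'' (which is immediate from the band move above together with Proposition~\ref{prop:distant-knots-slices}) to ``$K_i$ is topologically slice'', and it is a diagram chase refining the slicing band moves already used in the proof of Theorem~\ref{thm:thmB}. Everything else reduces to the distant-knot correspondence of Corollary~\ref{cor:slice-distant-knots} together with Euler characteristic bookkeeping.
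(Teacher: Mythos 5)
Your proposal is correct and follows essentially the same route as the paper: the same family $K_w(J_i)$ built from the topologically slice knots of~\cite{FPR16} with $\gamma_4(J_i)\ge i+1$, the same one-crosscap band-move cobordism to $\Phi(J_i)$ giving $\gamma_4(K_w(J_i))\ge\gamma_4(\Phi(J_i))-1$ via Corollary~\ref{cor:slice-distant-knots}, and the same treatment of $w=0$ and $w<0$. The only cosmetic difference is that you establish topological sliceness by writing $K_w(J_i)=K_w(U)\#J_i$ and gluing slice disks, whereas the paper phrases it as a topological concordance from $K_w(J_i)$ to a slice cable of the Hopf knot; both hinge on the same diagrammatic fact that $K_w(U)$ is smoothly slice.
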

\begin{proof} When $w = 0$, let $\{J_i\}$ be an infinite family of topologically slice knots in $S^3$ with $\gamma_4(J_i)\geq i$ from~\cite{FPR16}; for example, let $J_i$ be the connected sum of $i$ copies of the positive clasped untwisted Whitehead double of the right handed trefoil. Then $\gamma_4(\Phi(J_i))=\gamma_4(J_i)\geq i$ by Corollary~\ref{cor:slice-distant-knots}, as needed.

For $w>0$, let $\{J_i\}$ be an infinite family of topologically slice knots in $S^3$ with $\gamma_4(J_i) \geq i+1$; e.g.\ take the above family from~\cite{FPR16} with a shifted index. Consider the knots $K_w(J_i)$ with winding number $w$ described in Figure~\ref{fig:nonsliceevenslice}. Since $J_i$ is topologically slice,  $K_w(J_i)$ is topologically concordant to the $(2,1)$-cable of $H_{w,1}$ which is topologically slice. Thus, the knot $K_w(J_i)$ is topologically slice. On the other hand, the knot $K_w(J_i)$ and the distant knot $\Phi(J_i)$ cobound a M\"obius band (see Figure~\ref{fig:nonsliceevenslice}). Therefore by applying Corollary~\ref{cor:slice-distant-knots} we see that $\gamma_4(K_w(J_i)) \geq \gamma_4(\Phi(J_i)) -1 \geq  i$.

Once again, when $w<0$ simply take reverses of the examples for positive winding numbers.
\end{proof}

Interestingly, the above phenomenon for non-orientable genus is impossible for knots in $S^1\times S^2$ with odd winding number.

\begin{proposition}\label{prop:oddcrosscap}
Let $K$ be a knot in $S^1\times S^2$ with odd winding number~$w=2k+1$.
Then $\gamma^{top}_4(K) \leq \gamma_4(K)\leq k$.
\end{proposition}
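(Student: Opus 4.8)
The plan is to lower the winding number of $K$ to $\pm 1$ by a sequence of $k$ band moves, each realized by a once-punctured M\"obius band, and then to cap off the resulting winding-$\pm 1$ knot with a disk supplied by Theorem~\ref{thm:thmA}. Since reversing the orientation of $K$ changes neither $\gamma_4$ nor $\gamma^{top}_4$ (surfaces are not oriented) while negating the winding number, we may assume $w = 2k+1 \ge 1$, so $k \ge 0$; moreover $\gamma^{top}_4(K) \le \gamma_4(K)$ holds trivially, so it suffices to bound $\gamma_4(K)$.

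The key ingredient is the following band-move claim: if $L \subset S^1\times S^2$ is a knot of winding number $v \ge 3$, then there is an embedded band $b$ attached to $L$ so that the band surgery $L_b$ is a \emph{knot} of winding number $v - 2$, and so that the trace of the band move is a smooth, properly embedded, connected surface $\Sigma_b \subset S^1\times S^2 \times I$ cobounded by $L$ and $L_b$ with $\chi(\Sigma_b) = -1$. Since a connected surface with two boundary circles and odd Euler characteristic cannot be orientable, such a $\Sigma_b$ is automatically non-orientable of genus $1$, i.e.\ an $\mathbb{RP}^2$ with two open disks removed. To produce $b$ I would first observe that the natural map $H_1(S^1\times S^2 \setminus L;\Z) \to H_1(S^1\times S^2;\Z)\cong \Z$ is surjective, because a generator of the target is represented by a copy of the Hopf knot, which can be pushed off $L$. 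Hence one can find points $x,y$ on $L$ and an embedded arc $\delta$ from $y$ to $x$ with interior disjoint from $L$ such that the loop $\gamma$ formed by $\delta$ and the subarc of $L$ from $x$ to $y$ has winding number $1$. Thickening $\delta$ to a band attached to $L$, and choosing the one of its two end-gluings for which $L_b$ is connected, a direct homology computation gives $[L_b] = [L] - 2[\gamma]$, so $L_b$ has winding number $v - 2$; and the trace $\Sigma_b$ is an annulus $L\times I$ with one $1$-handle attached, so $\chi(\Sigma_b) = -1$. (One could also simply draw $b$ in the standard surgery diagram, cf.\ Figure~\ref{fig:diffeomorphic-knot}.)

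Granting this, I would apply the claim $k$ times, obtaining knots $K = K^{(0)}, K^{(1)}, \dots, K^{(k)}$ with $K^{(i)}$ of winding number $2(k-i)+1$, and stack the corresponding genus-$1$ non-orientable cobordisms in $S^1\times S^2\times I$ to get a single connected cobordism $\Sigma$ from $K$ to $K^{(k)}$ with $\chi(\Sigma) = -k$ and two boundary circles; for $k \ge 1$ this $\Sigma$ is non-orientable (it contains the first M\"obius band), hence has non-orientable genus $k$. Now $K^{(k)}$ has winding number $1$, so by Theorem~\ref{thm:thmA} it is smoothly concordant to the Hopf knot; stacking that concordance onto the disk $D^2\times\{p\}\subset D^2\times S^2$ for a point $p\in S^2$ shows that $K^{(k)}$ bounds a smooth disk $D$ in $D^2\times S^2$. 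Finally, glue $S^1\times S^2\times I$ onto $D^2\times S^2$ along $S^1\times S^2\times\{1\} = \partial(D^2\times S^2)$; the result is again diffeomorphic to $D^2\times S^2$, and there $K$ bounds $\Sigma\cup_{K^{(k)}} D$, a connected surface with a single boundary circle and Euler characteristic $\chi(\Sigma) + \chi(D) = -k + 1$, which is non-orientable for $k\ge 1$ (and a disk for $k = 0$) and so has non-orientable genus $k$. Therefore $\gamma_4(K) \le k$, and with it $\gamma^{top}_4(K) \le k$; the case of negative winding number follows by reversing orientation.

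The only genuinely technical point is the band-move claim — specifically, checking that one of the two ways of gluing the band keeps $L_b$ connected, and confirming that the winding number drops by exactly $2$. This is a routine low-dimensional computation, and it can in any case be bypassed by exhibiting the band explicitly in the standard surgery diagram for $S^1\times S^2$. Everything else is Euler-characteristic bookkeeping together with Theorem~\ref{thm:thmA}.
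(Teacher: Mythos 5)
Your proof is correct and follows essentially the same route as the paper's: reduce the winding number from $2k+1$ to $1$ by attaching $k$ non-orientable bands and then cap off with the slice disk furnished by Theorem~\ref{thm:thmA}. The paper states this in three sentences, while you additionally justify the existence of each band (via surjectivity of $H_1(S^1\times S^2\setminus L)\to\Z$ and the connectedness of the non-coherent resolution) and carry out the Euler-characteristic bookkeeping explicitly; these details are all sound.
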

\begin{proof}
By adding $k$ non-orientable bands, we obtain a smooth cobordism $F$ from the knot $K$ to a knot $K'$ with winding number $1$. By Theorem~\ref{thm:thmA}, any winding number $1$ knot is smoothly slice. Gluing together $F$ and a smooth slice disk for $K'$ yields a smooth non-orientable surface of genus~$k$ bounded by $K$.
\end{proof}

\section{Nonslice knots in odd winding number.}\label{sec:nonslice-odd}

In the previous section we proved that, for any even winding number, there exists an infinite family of knots in $S^1\times S^2$ which are not topologically slice in $D^2\times S^2$. This section will address the case of odd winding numbers and complete the proof of Theorem~\ref{thm:thmC}. Our proof will consist of a detailed analysis of a sliceness obstruction due
to Gilmer and Livingston~\cite{Gilmer83} in terms of Casson-Gordon invariants. Recall that Theorem~\ref{thm:thmC} states that for any winding number $w$ other than $\pm 1$ and $\pm 3$, there exists an infinite family of winding number $w$ knots which are distinct in topological concordance, and such that none of them is topologically slice. By Theorem~\ref{thm:thmA} we know that winding number $\pm 1$ knots are smoothly (and thus, topologically) slice. As we noted in the introduction, we believe that the exclusion of winding number $\pm 3$ knots from Theorem~\ref{thm:thmC} is a limitation of our technique, rather than being indicative of a deeper phenomenon.

The bulk of this section will consist of the proof of the following proposition. We show first how it yields a proof of Theorem~\ref{thm:thmC}.

\begin{proposition}\label{prop:nonslice}
Let $w\in \N$, $d$ be a prime power factor of $w$, and $J$ be a knot in $S^3$. Suppose that $\zeta_1\neq \zeta_2$ are primitive $d$th roots of unity satisfying
$2w<\sigma_J(\zeta_1)$ and $\sigma_J(\zeta_2)<-2w$.  Then the knot $K_w(J)$ in $S^1\times S^2$ shown in Figure~\ref{fig:JforNonSlice} is not topologically slice.
\end{proposition}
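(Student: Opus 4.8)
The plan is to build, from a hypothetical locally flat slice disk for $K_w(J)$ in $D^2 \times S^2$, a $4$-manifold to which the Gilmer--Livingston / Casson--Gordon machinery applies, and then to derive a contradiction from the signature hypotheses on $\sigma_J(\zeta_1)$ and $\sigma_J(\zeta_2)$. First I would understand $K_w(J)$ and its $d$-fold (or $w$-fold, then pass to the $d$-fold quotient) cyclic cover. Since $d \mid w$, the winding number of $K_w(J)$ is divisible by $d$, so $K_w(J)$ lifts to the $d$-fold cyclic cover $\Sigma_d$ of $S^1 \times S^2$ — which is again $S^1 \times S^2$ — as a knot (or a link, depending on how the covering interacts with the components); the key point, exactly as in the covering-link arguments of Sections~\ref{sec:slice-knots} and~\ref{sec:nonslice-even}, is that after lifting and performing the standard band moves one recognizes a distant knot built out of $J$ and its reverse, so that Theorem~\ref{thm:distant-knots-cobordisms} lets us trade statements about surfaces in $S^1 \times S^2 \times I$ for statements about surfaces in $S^3 \times I$, where Casson--Gordon invariants live.

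Next I would set up the obstruction. Assuming $K_w(J)$ bounds a locally flat disk $\Delta$ in $D^2 \times S^2$, I would look at the $d$-fold branched or unbranched cyclic cover of the pair and extract a bounding $4$-manifold for (a surgery on) the cover of $S^1 \times S^2$ along $K_w(J)$ whose first homology and intersection form are controlled — the analogue of the Casson--Gordon $4$-manifold for a slice knot in $S^3$. The character(s) $\chi$ of order $d$ on the relevant homology, pulled back from the $d$th roots of unity $\zeta_1, \zeta_2$, then give Casson--Gordon $\rho$-invariants / twisted signature defects which, on one hand, must vanish or be bounded (by a bounded amount depending on $w$ and $d$, coming from the genus/homology contributions of the lifted surface and the $S^2$-factor) because $\Delta$ exists, and on the other hand are computed — via the distant-knot identification and additivity of Casson--Gordon invariants under connected sum and satellite/cabling — to equal (up to correction terms of size $O(w)$) the Levine--Tristram signatures $\sigma_J(\zeta_1)$ and $\sigma_J(\zeta_2)$. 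The inequalities $2w < \sigma_J(\zeta_1)$ and $\sigma_J(\zeta_2) < -2w$ are precisely what is needed to overwhelm the $O(w)$ correction terms and produce a contradiction, with the factor $2w$ accounting for both signs of the possible defect and the rank contributed by the extra $2$-handle/$S^2$ in $D^2 \times S^2$ versus $D^4$.

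The main obstacle — and the place where the excluded winding numbers $\pm 1, \pm 3$ enter — will be the bookkeeping of the correction terms: identifying the precise bounding $4$-manifold after cyclic covering (its second Betti number, the signature of its intersection form, and the order of the metabolizing subgroup on which the Casson--Gordon character must vanish), and showing the metabolizer is visible so that one of $\zeta_1, \zeta_2$ necessarily restricts nontrivially to it. Concretely I expect to follow Gilmer--Livingston~\cite{Gilmer83}: the slice disk yields a metabolizer for the linking form on the first homology of the $d$-fold branched cover, at least one prescribed prime-power character survives on it, and the associated Casson--Gordon invariant is bounded in absolute value by a linear function of $w$; then a careful computation of that invariant for the covering link — reduced to $\sigma_J$ plus cabling/connected-sum corrections by Theorem~\ref{thm:distant-knots-cobordisms} and standard satellite formulas — contradicts the hypothesis. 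The genuinely delicate step is making the constant in that linear bound small enough (namely $2w$) that the hypotheses as stated suffice; getting a sharper constant is exactly what would remove the $w = \pm 3$ restriction, and I do not see how to do that with these techniques.
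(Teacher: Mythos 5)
Your high-level frame (slice disk $\Rightarrow$ a $4$-manifold to which Gilmer--Livingston applies $\Rightarrow$ bounded Casson--Gordon signatures $\Rightarrow$ contradiction with the hypotheses on $\sigma_J$) is the right one, but both concrete mechanisms you propose for executing it fail, and the step that actually makes the paper's proof close is absent. The reduction to $S^3$ via covering links and Theorem~\ref{thm:distant-knots-cobordisms} cannot be the engine here: in the $d$-fold cover the lift of $K_w(J)$ is a $d$-component link whose components are coherently oriented with winding number $w/d>0$, so no band sum among them yields a distant (winding number zero) knot, and the paper notes explicitly at the end of Section~\ref{sec:nonslice-even} that a knot of odd winding number admits no cobordism, orientable or not, to a distant knot --- which is precisely why Section~\ref{sec:nonslice-odd} abandons that strategy for the sliceness obstruction. (Covering links do appear in the proof of Theorem~\ref{thm:thmC}, but only to distinguish concordance classes, never to obstruct sliceness.) The paper instead works directly with the exterior $W$ of the hypothetical slice disk in $D^2\times S^2$: it is a rational homology ball embedded in $S^4$ with boundary the $f$-framed surgery $M_f(K_w(J))$, Gilmer--Livingston bounds $\lvert\sigma(\partial W,\chi)\rvert$ by $1$, and the Casson--Gordon signature is computed not by satellite formulas but by the Cimasoni--Florens surgery formula for the two-component link $K_w(J)\sqcup\alpha$, with the colored signature of $K_w(U)\sqcup\alpha$ bounded by $w-1$ via a $C$-complex.

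The more fundamental omission is the framing ambiguity, which is the entire reason the proposition needs \emph{two} roots of unity. Since $K_w(J)$ is not nullhomologous there is no canonical zero framing; the normal bundle of the slice disk induces some unknown framing $f$, constrained only by $f=-2w\lambda$ where $[H]=\lambda[\mu_{K}]$ in $H_1(W)$ (Proposition~\ref{prop:framingrelationship}), and $f$ enters the correction term of the surgery formula with unknown sign and magnitude. There is no metabolizer issue of the kind you anticipate --- $H_1(W)\cong\Z/w$ is cyclic and every relevant character descends --- rather, the role usually played by the metabolizer is played by $f$, and the resolution is that the inequality coming from $\zeta_1$ (where $\sigma_J(\zeta_1)>2w$) forces $f<0$ while the one from $\zeta_2$ forces $f>0$. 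Without this, the single bound $\lvert\sigma(M_f(K_w(J)),\chi)\rvert\le 1$ produces no contradiction and your argument does not close. Relatedly, the exclusion of $w=\pm 3$ is not primarily a matter of sharpening a constant: for $d=3$ the only primitive roots are complex conjugates, so $\sigma_J(\zeta_1)=\sigma_J(\zeta_2)$ and the hypotheses of the proposition can never be satisfied.
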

\begin{figure}[htb]
\begin{tikzpicture}
\node at (0,0) {\includegraphics[width=2.5cm]{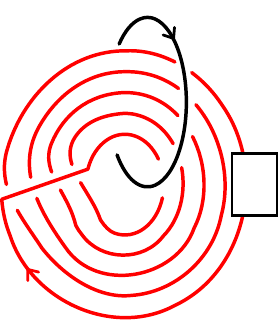}};
\node at (1.05, -0.15) {$J$};
\node at (0.5, 1.3) {$0$};
\node at (0,1.5) {$\alpha$};
\node at (-1.8,-0.9) {$K_w(J)$};
\end{tikzpicture}
\caption{The knot~$K_w(J)$ in $S^1 \times S^2$ with winding number $w$. The specific case $w=5$ is shown.}\label{fig:JforNonSlice}
\end{figure}

\begin{proof}[Proof of Theorem~\ref{thm:thmC}]
In Theorem~\ref{thm:non-slice-even} we saw that there exist infinite families of knots with any given even winding number that are not topologically slice. We have also noted that since we can reverse orientations, it suffices to produce examples for positive winding numbers. Thus, we only need to address the case of odd winding numbers $\geq 5$.

Assume $w\geq 5$.  Thus, $w$ has a factor $d$ which is a prime power with $d\ge 5$. Then  $\zeta_1 = e^{2\pi i/d}$ and $\zeta_2 = e^{4\pi i/d}$ are both primitive $d$th roots of unity.  Moreover, since $d\ge 5$, both $\zeta_1$ and $\zeta_2$ have positive imaginary parts. From~\cite[Theorem 1]{Liv04}, we know that the signature function evaluated at unit complex numbers with positive imaginary parts gives linearly independent maps from the set of knots to $\Z$.  Thus, there exist knots $J'$ and $J''$ in~$S^3$, distinct in topological concordance, such that $2w<\sigma_{J'}(\zeta_1)$, $\sigma_{J'}(\zeta_2)=0$, $\sigma_{J''}(\zeta_2)<-2w$, and $\sigma_{J''}(\zeta_1)=0$. For $i\in \N$, let $J_i = i(J'\#J'')$ be the connected sum of $i$ copies of $J'\#J''$.  By the additivity of the signature function, $J_i$ satisfies the hypotheses of Proposition~\ref{prop:nonslice} and thus, the knot $K_w(J_i)$ is not topologically slice.
Further, using the signature function we see that for $i \neq j$, $J_i \#rJ_i$ and $J_j \# rJ_j$ do not cobound a genus one locally flat orientable surface in $S^3 \times I$.

\begin{figure}[htb]
\begin{tikzpicture}
\node at (0,0) {\includegraphics[height=2.0cm]{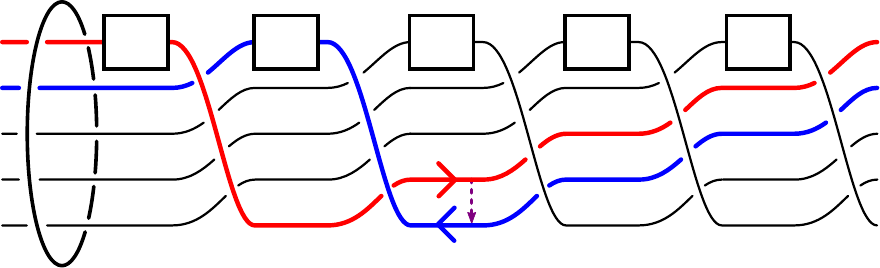}};
\node at (-3, 1.05) {$0$};
\node at (-2.25, 0.71) {$J$};
\node at (-1.15, 0.71) {$J$};
\node at (.04, 0.71) {$J$};
\node at (1.2, 0.71) {$J$};
\node at (2.38, 0.71) {$J$};
\node at (6.1,0) {\includegraphics[height=2.0cm]{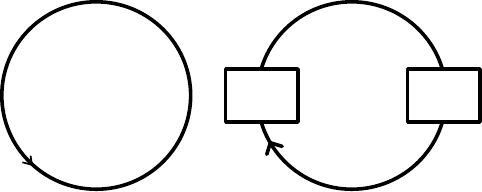}};
\node at (5.4, 0.8) {$0$};
\node at (6.3, 0) {$J$};
\node at (8.25, 0) {$J$};
\end{tikzpicture}
\caption{Left: The $5$-fold cover
of $K_5(J)$, $\widetilde{K_5(J)}$, is a $5$-component link.
Two components of this link are highlighted. Right:  The distant knot $\Phi(J\#rJ)$ obtained from $\widetilde{K_5(J)}$ by a band sum shown on the left.}\label{fig:infnonsliceodd}
\end{figure}
Now we prove that $K_w(J_i)$ is not topologically concordant to $K_w(J_j)$, if $i \neq j$. Suppose we have a topological concordance $C$ between the two knots. The preimage of $C$ in the $w$-fold cover of $S^1\times S^2\times I$ is a disjoint union of topological concordances between the components of the $w$-component links $\widetilde{K_w(J_i)}$ and $\widetilde{K_w(J_j)}$, the lifts of $K_w(J_i)$ and $K_w(J_j)$. Band two adjacent components of $\widetilde{K_w(J_i)}$ as shown in Figure~\ref{fig:infnonsliceodd} together to get a cobordism to the distant knot $\Phi(J_i\# rJ_i)$. Doing the same to $\widetilde{K_w(J_j)}$ yields a locally flat genus one cobordism between the distant knots $\Phi(J_i\#rJ_i)$ and $\Phi(J_j\#rJ_j)$. By Theorem~\ref{thm:distant-knots-cobordisms} we conclude that $J_i\#rJ_i$ and $J_i\#rJ_i$ cobound a locally flat genus one surface in $S^3 \times I$, which is a contradiction.
\end{proof}

In the remainder of this section, we prove Proposition~\ref{prop:nonslice} by developing an obstruction to sliceness in $D^2\times S^2$. The general idea is that if a knot $K$ in $S^1\times S^2$ bounds a locally flat disk in $D^2\times S^2$, then the complement of such a slice disk is a $4$-manifold bounded by some surgery on $K$. Since $D^2\times S^2$ embeds in $S^4$, so does this slice disk exterior and its boundary. We then invoke an obstruction to such an embedding in $S^4$~\cite{Gilmer83} in terms of Casson-Gordon invariants~\cite{Casson86}. Approximating these invariants in the special case of the knot~$K_w(J)$ will finish the proof of Proposition~\ref{prop:nonslice}.

\begin{figure}[htb]
\begin{tikzpicture}
\node at (0,0) {\includegraphics[width=6cm]{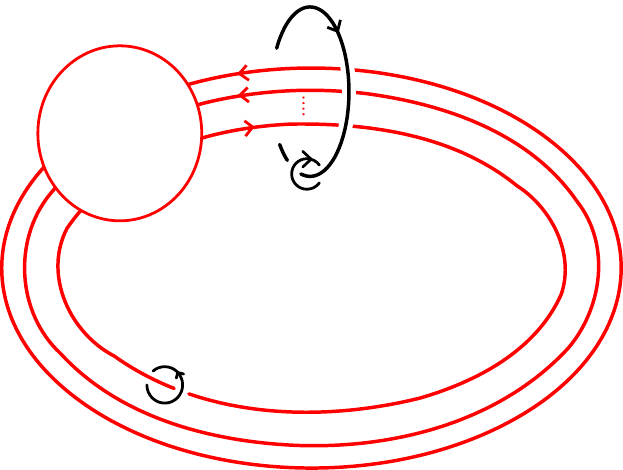}};
\node at (-1.85,1) {\Huge $P$};
\node at (0.5,2) {$0$};
\node at (0,2.5) {$\alpha$};
\node at (0,0.1) {$H$};
\node at (-1.8,-0.9) {$K$};
\node at (-0.9,-1.3) {$\mu_K$};
\end{tikzpicture}
\caption{A generic knot $K$ in $S^1 \times S^2$, with some winding number $w>0$, given as the closure of a tangle $P$.}\label{fig:homologycalculation}
\end{figure}
Recall that we draw our knots in the standard diagram for $S^1\times S^2$, as in Figure~\ref{fig:homologycalculation}. For the purposes of the upcoming proof we refer to the $0$-framed curve as $\alpha$.  A positive meridian for $\alpha$ is the Hopf knot, called $H$. Henceforth, an integer framing for a knot $K$ drawn in this fixed standard diagram for $S^1\times S^2$ is the framing with respect to the Seifert framing of $K$ when we consider the surgery diagram as a link $K\sqcup \alpha$ in $S^3$.  A key complication in our analysis is the fact that, since there is no natural $0$-framing of a non-nullhomologous knot in $S^1\times S^2$, we cannot hope that a framing on a slice disk for $K$ in $D^2\times S^2$ will induce the zero framing on this fixed diagram for $K$.  Moreover, the Hopf knot~$H$ need not be nullhomologous in the slice disk exterior. The following proposition reveals that these two difficulties are related.

\begin{proposition}\label{prop:framingrelationship}
Let $K$ be a knot in $S^1\times S^2$ with winding number $w\in \Z$, drawn in the standard diagram for $S^1\times S^2$, as in Figure~\ref{fig:homologycalculation}. Let $\mu_K$ be the positive meridian of $K$, $\Delta$ a topological slice disk for $K$ in $D^2\times S^2$, and $W$ the exterior of $\Delta$ in $D^2\times S^2$. Let $f$ be the framing on $K$ induced by the normal bundle on $\Delta$. Then there exists an integer $\lambda$ such that $[H]=\lambda [\mu_K]$ in $H_1(W;\Z)$ and we have the relationship
\[ f=-2\cdot w\cdot \lambda. \]
\end{proposition}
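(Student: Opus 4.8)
The plan is to produce the integer $\lambda$ by hand from the slice disk and then verify both assertions directly; the softer route through the linking form of $\partial W$ only gives $f\equiv -2w\lambda\pmod{w^2}$, so a chain‑level computation is needed to pin the equality down on the nose.

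First I would record the relevant homological data. Since $\Delta$ is a disk, its normal bundle $\nu\Delta$ is trivial, and writing $D^2\times S^2 = W\cup\nu\Delta$ (with $\nu\Delta$ a $2$–handle attached along $K$ with framing $f$), a Mayer--Vietoris computation that uses $[\Delta]\cdot[S]=w$, where $S=\{0\}\times S^2$, shows $H_1(W;\Z)\cong\Z/w$ generated by $[\mu_K]$. Let $D_H = D^2\times\{p\}\subset D^2\times S^2$ be the obvious disk with $\partial D_H = H$; then $[D_H]$ generates $H_2(D^2\times S^2,\partial)$, $[D_H]\cdot[S]=1$, $[S]^2=0$, $[\Delta]\cdot[S]=w$, and $D_H$ induces the $0$–framing on $H$ (two parallel meridians of the surgery curve $\alpha$ are unlinked in $S^3$). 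After an isotopy making $K$ and $H$ disjoint, set $\lambda := \Delta\cdot D_H\in\Z$. The punctured disk $D_H\cap W = D_H\setminus\nu\Delta$ is a surface in $W$ whose boundary is $H$ together with one meridian of $\Delta$ for each point of $\Delta\cap D_H$; hence $[H]=\lambda[\mu_K]$ in $H_1(W;\Z)$, which is the first claim.

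Next I would reinterpret $f$. Choose a Seifert surface $F$ for $K$ in the $S^3$ of the surgery diagram; it meets $\alpha$ transversally in $w=\lk(K,\alpha)$ points, so $A := F\setminus\nu\alpha$ becomes a $2$–chain in $S^1\times S^2$ with $\partial A = K - wH$ whose surface framings on all boundary components are $0$ (as $\lk(K,H)=0$ and parallel $\alpha$–meridians are unlinked). Pushing $A$ and the relevant boundary collars slightly into $D^2\times S^2$ and gluing, the assembled surface $\Sigma := \Delta\cup(-A)\cup(-wD_H)$ is a closed (immersed) surface, since $K-(K-wH)-wH=0$. Pairing with $[D_H]$ gives $[\Sigma]\cdot[D_H] = \Delta\cdot D_H - A\cdot D_H - w(D_H\cdot D_H) = \lambda$, the last two terms vanishing because $A\cdot D_H = \lk(H,K)=0$ and $D_H$ has self‑intersection $0$ with respect to its $0$–framing; therefore $[\Sigma] = \lambda[S]$ and $[\Sigma]^2 = \lambda^2[S]^2 = 0$.

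Finally I would compute $[\Sigma]^2$ by a transverse pushoff. Take $\Sigma^+ = \Delta^+\cup(-A^+)\cup(-wD_H^+)$, pushing every boundary curve off with its $0$–framing; these match up, so $\Sigma^+$ is a legitimate representative homologous to $\Sigma$ and $\Sigma\cdot\Sigma^+=[\Sigma]^2=0$. Expanding $\Sigma\cdot\Sigma^+$ into its nine terms: $\Delta\cdot\Delta^+ = -f$ (the relative Euler number of $\nu\Delta$ against the $0$–framing, since $\Delta$ itself induces the $f$–framing on $K$); $\Delta\cdot D_H^+ = D_H\cdot\Delta^+ = \lambda$; $D_H\cdot D_H^+ = 0$ and $A\cdot A^+ = 0$ (disjoint parallel copies, using that $A$'s surface framings are $0$); and all six mixed terms containing $A$ or $A^+$ paired with a $\Delta$– or $D_H$–piece vanish, because via the collar they reduce to linking numbers of $K$ or $H$ with $0$–framed pushoffs and with $\alpha$–meridians, all of which are $0$. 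Collecting, $0 = \Sigma\cdot\Sigma^+ = -f - 2w\lambda$, i.e.\ $f = -2w\lambda$. The main obstacle is precisely this last step: verifying the nine intersection numbers with consistent orientation conventions (in particular fixing the sign of $\Delta\cdot\Delta^+$ and confirming every $A$–term really vanishes), and making rigorous the reduction of $4$–dimensional intersection numbers near $\partial(D^2\times S^2)$ to $3$–dimensional linking numbers.
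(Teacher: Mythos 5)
Your argument for the first claim --- puncturing the disk $D_H=D^2\times\{p\}$ at its intersection points with $\Delta$ to exhibit $[H]$ as $\lambda$ times $[\mu_K]$ in $H_1(W;\Z)$ --- is exactly the paper's. For the framing identity you take a genuinely different route. The paper removes the same intersection points from $\Delta$ instead, obtaining a planar surface $P$ in the exterior of $D_H$, identifies that exterior with $D^4$ (so that the extra boundary components become $0$-framed pushoffs of the surgery curve $\alpha$ in $S^3$), and reads off $0=f+2w\lambda$ in one line from the classical fact that for a properly embedded surface in $D^4$ the sum of the boundary framings and pairwise linking numbers vanishes. You instead stay inside $D^2\times S^2$, close $\Delta$ up with a punctured Seifert surface $A$ and $w$ copies of $D_H$, identify the resulting cycle as $\lambda[S]$ with square zero, and expand a nine-term self-intersection. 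Both computations are correct and are really the same self-intersection count organized differently: passing to the complement of $D_H$ kills $H_2$ and collapses your nine terms into the single boundary formula, which is why the paper's version is shorter; your version avoids having to recognize that complement as $D^4$, at the cost of the bookkeeping you yourself flag. Those flagged points do all work out --- in particular $\Delta\cdot\Delta^{+}=-f$ is the correct sign with the paper's conventions (it is forced, e.g., by testing on $\Delta=D_H$ tubed to $\{0\}\times S^2$, where $\lambda=w=1$ and $f=-2$), and the six mixed terms involving $A$ vanish because they reduce to $\lk(K,K_0)$, $\lk(K,H)$, and linkings of $0$-framed meridians of $\alpha$, all zero --- so I see no gap, only a concentration of sign-risk in the one term that the paper's formulation makes no easier to avoid.
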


\begin{proof}
Let $D$ be a slice disk for $H$ given by $D^2\times \{pt\}  \subset D^2 \times S^2$. Suppose that $D$ intersects $\Delta$ in $\lambda^+$ points with positive orientation and $\lambda^-$ points with negative orientation. Removing small neighborhoods of these points from $D$ results in a planar cobordism in $W$ from $H$ to $\lambda^+$ positively oriented meridians of $\Delta$ and $\lambda^-$ negatively oriented meridians.  Thus, $[H]=\lambda [\mu_K]$ in $H_1(W;\Z)$, where $\lambda=\lambda^+-\lambda^-$.

On the other hand, removing neighborhoods of these same points from $\Delta$ turns~$\Delta$ into a planar cobordism from $K$ to $\lambda^+$ positively oriented meridians of $D$ and $\lambda^-$ negatively oriented meridians. This cobordism is contained in the exterior of $D$. Note that this exterior is $D^2\times S^2 - D^2\times Int(D^2)=D^2\times D^2\cong D^4$. Moreover, each of these meridians of $D$ results in a $0$-framed pushoff of $\alpha$. We thus see a planar cobordism $P\subset D^4$ from $K\subset S^3$ to algebraically $\lambda$ copies of $0$-framed pushoffs of $\alpha$.  Since $P\subset \Delta$, it also induces the $f$-framing on $K$. Since $P$ is a surface in $D^4$ bounded by a link in $S^3$, we know that the sum of the framings of the boundary components of $P$ together with the sum of the pairwise linking numbers of the boundary components must be equal to $P\cdot P=0$, that is,
\[
0=f+\lambda^+ \lnk(K,\alpha)-\lambda^- \lnk(K,\alpha)+\lambda^+ \lnk(\alpha,K)-\lambda^- \lnk(\alpha,K)  =f+2w\lambda
\]
and thus $f=-2w\lambda$ as claimed.
\end{proof}

\begin{proposition}\label{prop:HomologyExterior}
Let $K$  be a knot in $S^1 \times S^2$ with winding number~$w \neq 0$ drawn in the standard diagram for $S^1\times S^2$ as shown in Figure~\ref{fig:homologycalculation}. Suppose that $K$ is topologically slice with a slice disk $\Delta\subseteq D^2\times S^2$. Let $W$ be the exterior of $\Delta$ in $D^2\times S^2$. Then the nonvanishing integral homology groups of $W$ and $\bdry W$ are
\begin{align*}
H_0(W)&\cong \Z, & H_1(W)&\cong \Z/w  \\
H_0(\bdry W)&\cong \Z &  H_1(\bdry W)&\cong (\Z/w)^2  & H_3(\bdry W)&\cong\Z
\end{align*}
Furthermore, $H_1(W)$ is generated by the positive  meridian~$\mu_K$ of $K$, and $H_1(\bdry W)$
is generated by $\mu_K$ and the Hopf knot $H$ shown in Figure~\ref{fig:homologycalculation}.
\end{proposition}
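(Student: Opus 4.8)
The plan is to compute the homology of $W$ and then of $\partial W$, in each case by a single Mayer--Vietoris sequence, with the relation $f=-2w\lambda$ of Proposition~\ref{prop:framingrelationship} as the one non-formal ingredient. For $W$, I would decompose $D^2\times S^2=W\cup\nu(\Delta)$ with $\nu(\Delta)\cong\Delta\times D^2$ a closed tubular neighbourhood; then $\nu(\Delta)\simeq\ast$ and the frontier $W\cap\nu(\Delta)=\Delta\times S^1$ is a solid torus with $H_1\cong\Z$, generated by the meridian fibre, which on $\partial(D^2\times S^2)$ is the positive meridian $\mu_K$ of $K$. The only nontrivial map in the sequence is the connecting homomorphism $H_2(D^2\times S^2)\to H_1(W\cap\nu(\Delta))$, which is multiplication by the algebraic intersection number $[\{\mathrm{pt}\}\times S^2]\cdot[\Delta]$. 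To evaluate this, note $H_2(D^2\times S^2)\cong\Z$ is generated by $[\{\mathrm{pt}\}\times S^2]$, while the boundary map $H_2(D^2\times S^2,\partial(D^2\times S^2))\to H_1(S^1\times S^2)$ is an isomorphism of infinite cyclic groups carrying $[D^2\times\{\mathrm{pt}\}]$ to $[H]$; hence $[\partial\Delta]=[K]=w[H]$ forces $[\Delta]=w\,[D^2\times\{\mathrm{pt}\}]$ and so $[\{\mathrm{pt}\}\times S^2]\cdot[\Delta]=w\,([\{\mathrm{pt}\}\times S^2]\cdot[D^2\times\{\mathrm{pt}\}])=w$. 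Since $w\neq 0$, Mayer--Vietoris then yields $H_0(W)\cong\Z$, $H_1(W)\cong\Z/w$ with generator $\mu_K$, and $H_i(W)=0$ for $i\ge2$.

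Next I would record the homology of the knot exterior $E_K\coloneqq(S^1\times S^2)\setminus\nu(K)$. Presenting $S^1\times S^2$ as $0$-surgery on the unknot $\alpha$ of Figure~\ref{fig:homologycalculation}, the space $E_K$ is the exterior of the two-component link $K\sqcup\alpha$ in $S^3$ after that $0$-surgery. Killing the $0$-framed longitude of $\alpha$, which is homologous to $\lnk(\alpha,K)\,\mu_K=w\,\mu_K$, gives $H_1(E_K)=\langle\mu_K,\mu_\alpha\mid w\mu_K\rangle\cong\Z\langle H\rangle\oplus(\Z/w)\langle\mu_K\rangle$, using that $\mu_\alpha=[H]$; similarly $H_0(E_K)\cong\Z$ and $H_2(E_K)=H_3(E_K)=0$. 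The same computation records the relation $[\lambda_K]=w[H]$ in $H_1(E_K)$ for the Seifert longitude $\lambda_K$ of $K$, since $\lnk(\lambda_K,\alpha)=w$ and $\lnk(\lambda_K,K)=0$.

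Finally, $\partial W$ is obtained from $E_K$ by gluing back the solid torus $\Delta\times S^1$: its meridian disk $\Delta\times\{\mathrm{pt}\}$ is attached along $\partial\Delta\times\{\mathrm{pt}\}$, which is the pushoff of $K$ with the framing $f$ induced by $\Delta$, i.e.\ the curve $\lambda_K+f\mu_K$ on $\partial E_K=T^2$; its core is homologous inside $\Delta\times S^1$ to $\mu_K$. Running Mayer--Vietoris for $\partial W=E_K\cup_{T^2}(\Delta\times S^1)$ and using $[\lambda_K]=w[H]$, the map $H_1(T^2)\to H_1(E_K)\oplus H_1(\Delta\times S^1)$ turns out to be injective, giving $H_2(\partial W)=0$, and on the cokernel side $H_1(\partial W)$ reduces to the abelian group $\langle H,\mu_K\mid w\mu_K=0,\ wH+f\mu_K=0\rangle$, with presentation matrix $\left(\begin{smallmatrix}0&w\\w&f\end{smallmatrix}\right)$. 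This is exactly where Proposition~\ref{prop:framingrelationship} is needed: because $f=-2w\lambda$ is divisible by $w$, this matrix has Smith normal form $\operatorname{diag}(w,w)$, so $H_1(\partial W)\cong(\Z/w)^2$, generated by $[H]$ and $[\mu_K]$ (without divisibility by $w$ one would instead get a cyclic-by-cyclic extension of a different isomorphism type). The remaining groups $H_0(\partial W)\cong\Z$ and $H_3(\partial W)\cong\Z$ hold since $\partial W$ is a closed, connected, oriented $3$-manifold. The main obstacle is purely bookkeeping --- tracking the curves $\mu_K$, $\lambda_K$, the core of $\Delta\times S^1$, and $\partial\Delta\times\{\mathrm{pt}\}$ consistently on the relevant tori and identifying the connecting maps with intersection numbers --- while the only genuinely non-routine input is the framing relation $f=-2w\lambda$, which is precisely what makes $H_1(\partial W)$ split as two copies of $\Z/w$ rather than being cyclic of order $w^2$.
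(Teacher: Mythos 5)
Your proposal is correct and follows essentially the same route as the paper: a Mayer--Vietoris computation for $D^2\times S^2=\nu\Delta\cup W$ whose connecting map is multiplication by $w$, followed by the surgery presentation $\langle \mu_K, H\mid w\mu_K=0,\ wH+f\mu_K=0\rangle$ of $H_1(\partial W)$ together with the divisibility $w\mid f$ supplied by Proposition~\ref{prop:framingrelationship}. The only cosmetic difference is that you derive this linking--framing presentation by hand, via a second Mayer--Vietoris for the Dehn filling $\partial W=E_K\cup_{T^2}(\Delta\times S^1)$, where the paper simply cites the standard linking-matrix computation.
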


\begin{proof}
Let $\nu \Delta$ be a tubular neighbourhood of $\Delta$; note
that $\nu\Delta$ restricts to a tubular neighbourhood~$\nu K$ of the knot~$K$.

First, we compute the homology groups of $W$. Since $W$ is connected, $H_0(W) \cong \Z$. Consider the Mayer-Vietoris sequence for the decomposition $D^2 \times S^2 = \nu \Delta \cup W$. The boundary map $\partial \colon H_2(D^2 \times S^2) \to H_1(\Delta \times S^1)$
is just $\Z \xrightarrow{\times w} \Z$; thus, its kernel vanishes and the cokernel is $\Z/w$. As a result,~$H_1(W)\cong\Z/w$, $H_2(W) = 0$, and $H_3(W) = 0$. Since $H_1(\Delta \times S^1)$ is generated by the $S^1$-factor which is sent in $W$ to $\mu_K$, we conclude that $\mu_K$ generates $H_1(W)$.

Next we compute the homology of $\bdry W$.  We see that $H_0(\bdry W)\cong H_3(\bdry W)\cong \Z$ since $\bdry W$ is a connected, oriented, compact 3-manifold.
Suppose that the normal bundle of $\Delta$ induces the $f$-framing on $K$. Then $\bdry W$ is the result of $0$-surgery on $\alpha$ and $f$-surgery on $K$.  Note that $\lk(K, \alpha)=w$ and that we can compute $H_1(\bdry W)$ in terms of the linking-framing matrix, as in \cite[Proposition 5.3.11]{gompf-stipsicz}.  Thus,
\[
H_1(\bdry W) = \langle \mu_K, H ~|~ w \mu_k=0, f \mu_K+w H=0\rangle.
\]
By Proposition~\ref{prop:framingrelationship}, $f$ is a multiple of $w$ and thus, $f\mu_K=0$ in $H_1(\partial W)$. As a result, the above presentation gives $H_1(\bdry W)\cong (\Z/w)^2$ with generators $\mu_K$ and $H$, as desired.
\end{proof}

\noindent Recall the following theorem of Gilmer--Livingston.

\begin{proposition}[{\cite[Theorem 2.1]{Gilmer83}}]~\label{prop:Gilmer-Livingstonobst}
Let $d$ be a prime power, i.e.\ $d=q^s$ for some prime $q$ and $s\in\N$. Let $W\subset S^4$ be a rational homology ball, and $\chi\colon H_1(W)\to \Z/d\to \CC^\times$ a character.  Then
\[ |\sigma(\bdry W,\chi)|+|\rho-1-\eta(\bdry W,\chi)|\le \rho \]
where $\rho=\frac{1}{2}\dim_{\Z/q} H_1(\bdry W;\Z/q)$.
\end{proposition}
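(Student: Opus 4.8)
\emph{Overview.} The plan is to use $W$ itself as the bounding $4$-manifold that computes the Casson--Gordon invariant of its own boundary, and then to estimate the resulting twisted signature using the two hypotheses that $W$ is a rational homology ball and that it embeds in $S^4$. Throughout write $M=\bdry W$. Recall that for a character $\psi\colon H_1(M)\to\Z/d\to\CC^\times$ with $d$ a prime power, the Casson--Gordon signature $\sigma(M,\psi)$ is defined by choosing \emph{any} compact oriented $4$-manifold $V$ with $\bdry V=M$ over which $\psi$ extends to $\tilde\psi\colon H_1(V)\to\CC^\times$, and setting $\sigma(M,\psi)=\sigma_{\tilde\psi}(V)-\sigma(V)$, where $\sigma_{\tilde\psi}(V)$ is the signature of the $\CC$-valued intersection form on the twisted homology $H_2(V;\CC_{\tilde\psi})$. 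Independence of the choice of $(V,\tilde\psi)$ is exactly where $d$ being a prime power enters, through finiteness of $\Omega_3(\Z/d)$ together with the $G$-signature theorem; I would cite this from Casson--Gordon rather than reprove it. The key observation is that the given $\chi$ is already defined on all of $H_1(W)$, so $W$ is itself an admissible bounding manifold for the restricted character $\chi_M:=\chi\circ\bigl(H_1(M)\to H_1(W)\bigr)$. Since $W$ is a rational homology ball, $\sigma(W)=0$, and therefore $\sigma(M,\chi)=\sigma_\chi(W)$.

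\emph{Estimating the twisted signature.} Next I would bound $|\sigma_\chi(W)|$ by the rank of the twisted intersection form, namely $\dim_\CC H_2(W;\CC_\chi)$ minus its nullity. I compute $\dim_\CC H_2(W;\CC_\chi)$ from the twisted Euler characteristic: since $\CC_\chi$ has rank one, this equals the ordinary Euler characteristic $e(W)=e(D^4)=1$. Assuming $\chi$ is nontrivial (the trivial case is immediate), one has $H_0(W;\CC_\chi)=0$, and $H_4(W;\CC_\chi)=0$ as $W$ has nonempty boundary, so that $\dim H_2(W;\CC_\chi)=1+\dim H_1(W;\CC_\chi)+\dim H_3(W;\CC_\chi)$. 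I would then feed in Poincar\'e--Lefschetz duality, $H_3(W;\CC_\chi)\cong H^1(W,M;\CC_\chi)$, together with the long exact sequence of the pair $(W,M)$, to rewrite both the nullity of the form and these twisted Betti numbers in terms of the boundary quantity $\eta(\bdry W,\chi)$, which records the twisted first Betti number $\dim_\CC H_1(M;\CC_\chi)$. This bookkeeping is precisely what produces the term $|\rho-1-\eta(\bdry W,\chi)|$, with the stray constant coming from the $1$ in the Euler-characteristic identity.

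\emph{The prime-power comparison and half-lives-half-dies.} The final ingredient converts the $\CC_\chi$-coefficient dimensions on the $W$-side into the $\Z/q$-coefficient quantity $\rho$. I would invoke the Casson--Gordon comparison lemma, again valid because $d=q^s$ is a prime power, bounding $\dim_\CC H_\ast(W;\CC_\chi)$ by the corresponding $\Z/q$ (equivalently $\mathbb{F}_q$) homology dimensions of $W$. Then I apply the half-lives-half-dies principle: because $M$ bounds rational homology balls on both sides of $S^4$ --- the complementary region $S^4\setminus\mathring W$ is also a rational homology ball by Alexander duality --- the dimension $\dim_{\Z/q}H_1(M;\Z/q)$ is even, and the image of $H_1(M;\Z/q)\to H_1(W;\Z/q)$ has dimension exactly $\rho$. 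Combining the signature bound from the second step, the Euler-characteristic computation, and these $\Z/q$-dimension estimates yields the asserted inequality.

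\emph{Main obstacle.} I expect the genuinely delicate step to be the prime-power comparison together with the nullity/duality bookkeeping of the second step: one must match the $\CC_\chi$-twisted Betti and nullity numbers of $W$ against the $\Z/q$-dimensions of $M$, carefully tracking the $-1$ coming from the rational-homology-ball condition, so that the two quantities $|\sigma(\bdry W,\chi)|$ and $|\rho-1-\eta(\bdry W,\chi)|$ combine under the single bound $\rho$. Since both the well-definedness of $\sigma(M,\chi)$ and the evenness of $\dim_{\Z/q}H_1(M;\Z/q)$ rely on $d$ being a prime power, I would keep every dimension estimate phrased with $\Z/q$ coefficients so that this hypothesis stays in force at each stage.
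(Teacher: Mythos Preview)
The paper does not supply a proof of this proposition at all; it is simply quoted from Gilmer--Livingston \cite{Gilmer83} and then applied. So there is nothing in the paper to compare your argument against.

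That said, your sketch is the standard route and is essentially Gilmer's own argument: take $W$ itself as the bounding $4$-manifold (legitimate since $\chi$ is already defined on $H_1(W)$), use that $\sigma(W)=0$ because $W$ is a rational homology ball so that $\sigma(M,\chi)=\sigma_\chi(W)$, bound $|\sigma_\chi(W)|$ by the rank of the twisted form, and control the twisted Betti numbers via $\chi(W)=1$, Poincar\'e--Lefschetz duality, and Casson--Gordon's prime-power comparison lemma. The embedding $W\subset S^4$ then enters exactly as you say, through the complementary rational homology ball $W'=S^4\setminus \mathring W$.

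One point to tighten: the assertion that the image of $H_1(M;\Z/q)\to H_1(W;\Z/q)$ has dimension exactly $\rho$ is not literally ``half-lives-half-dies'' --- that principle gives only an inequality in general. The equality uses more: from Mayer--Vietoris for $S^4=W\cup_M W'$ one gets $H_1(M;\Z/q)\cong H_1(W;\Z/q)\oplus H_1(W';\Z/q)$, and since the kernels of the two restriction maps are self-annihilating for the (nonsingular) linking form, each has dimension at most $\rho$; together with the dimension count this forces both to equal $\rho$. You should make this step explicit rather than invoking half-lives-half-dies as a black box.
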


The quantity $\sigma(\bdry W,\chi)$ is the Casson-Gordon signature and the quantity $\eta(\bdry W,\chi)$ is the Casson-Gordon nullity of the $3$-manifold $\partial W$ with respect to the character $\chi$.

The above theorem yields the following obstruction to a knot in $S^1\times S^2$ being topologically slice in $S^2\times D^2$.

\begin{corollary}\label{cor:Casson-Gordon obstruction}
Let $K$ be a topologically slice knot in $S^1\times S^2$ drawn in the standard diagram for $S^1\times S^2$, with winding number $w\neq 0$. Let $d$ be a prime power, i.e.\ $d=q^s$ for some prime $q$ and $s\in\N$, such that $d$ divides $w$. Suppose that the normal bundle of a slice disk~$\Delta$
induces the framing~$f$ on $K$.  Then for any character~$\chi:H_1(M_f(K))\to \Z/d\to \CC^\times$
sending $[H]-\lambda \cdot [\mu_K]\mapsto 1$ where $f=-2\cdot w\cdot \lambda$,
\[  |\sigma(M_f(K),\chi)|\le 1. \]
\end{corollary}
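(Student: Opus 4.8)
Fix a locally flat slice disk $\Delta\subset D^2\times S^2$ for $K$ inducing the framing $f$ on $K$, and let $W=(D^2\times S^2)\setminus\nu\Delta$ be its exterior. The plan is to realize $W$ as a rational homology ball embedded in $S^4$ and then feed it into the Gilmer--Livingston obstruction of Proposition~\ref{prop:Gilmer-Livingstonobst}. As already noted, $D^2\times S^2$ embeds in $S^4$: writing $S^4=\partial(D^2\times D^3)=(S^1\times D^3)\cup_{S^1\times S^2}(D^2\times S^2)$ exhibits such an embedding, so $W\subset D^2\times S^2\subset S^4$. Since $w\neq 0$, Proposition~\ref{prop:HomologyExterior} gives $H_1(W)\cong\Z/w$, $H_2(W)=0$, and $H_3(W)=0$; as $W$ has nonempty boundary it follows that $H_*(W;\Q)\cong H_*(\mathrm{pt};\Q)$, i.e.\ $W$ is a rational homology ball in $S^4$. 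The same proposition identifies $\partial W$ with $M_f(K)$, the result of $f$-framed surgery on $K$, shows that $H_1(\partial W)\cong(\Z/w)^2$ is generated by $[\mu_K]$ and $[H]$, and from Proposition~\ref{prop:framingrelationship} we have $[H]=\lambda[\mu_K]$ in $H_1(W)$ with $f=-2w\lambda$.

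The second step is to check that the given character on $H_1(M_f(K))=H_1(\partial W)$ is the restriction of a character on $H_1(W)$, so that Proposition~\ref{prop:Gilmer-Livingstonobst} applies. Define $\tilde\chi\colon H_1(W)\cong\Z/w\to\CC^\times$ by $\tilde\chi([\mu_K])=\chi([\mu_K])$; since $\chi$ factors through $\Z/d$ and $d\mid w$, so does $\tilde\chi$, so $\tilde\chi$ is a bona fide character $H_1(W)\to\Z/d\to\CC^\times$. Precomposing with $H_1(\partial W)\to H_1(W)$, the resulting character agrees with $\chi$ on the generator $[\mu_K]$ by construction, and on $[H]$ it equals $\tilde\chi(\lambda[\mu_K])=\tilde\chi([\mu_K])^{\lambda}=\chi([\mu_K])^{\lambda}$, which is precisely $\chi([H])$ because the hypothesis $\chi([H]-\lambda[\mu_K])=1$ says $\chi([H])=\chi([\mu_K])^{\lambda}$. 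Since $[\mu_K]$ and $[H]$ generate $H_1(\partial W)$, we conclude $\chi=\tilde\chi|_{\partial W}$, and hence $\sigma(M_f(K),\chi)$ and $\eta(M_f(K),\chi)$ are the Casson--Gordon signature and nullity of $\partial W$ relative to $\tilde\chi$.

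Finally, I would compute the quantity $\rho$ appearing in Proposition~\ref{prop:Gilmer-Livingstonobst}. With $d=q^s$ we have $q\mid d\mid w$, so $H_1(\partial W;\Z/q)\cong(\Z/w)^2\otimes\Z/q\cong(\Z/q)^2$ (there is no $\mathrm{Tor}$ contribution, as $H_0(\partial W)$ is free), whence $\rho=\tfrac{1}{2}\dim_{\Z/q}H_1(\partial W;\Z/q)=1$. Applying Proposition~\ref{prop:Gilmer-Livingstonobst} to $W\subset S^4$ and $\tilde\chi$, and substituting $\rho=1$, gives
\[
|\sigma(M_f(K),\chi)|+|{-}\eta(M_f(K),\chi)|\le 1,
\]
so in particular $|\sigma(M_f(K),\chi)|\le 1$, as claimed. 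I expect the only real subtlety to be the middle step: matching the hypothesis $\chi([H]-\lambda[\mu_K])=1$ with extendability of $\chi$ across $W$, and keeping straight that $\lambda$, $f$, and the identification $\partial W=M_f(K)$ are all the data supplied by the slice disk $\Delta$ via Propositions~\ref{prop:framingrelationship} and~\ref{prop:HomologyExterior}; the remaining steps are bookkeeping with the homology computations already in hand.
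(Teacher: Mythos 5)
Your proof is correct and follows essentially the same route as the paper's: embed the slice disk exterior $W$ in $S^4$ via $D^2\times S^2\subset S^4$, use Propositions~\ref{prop:HomologyExterior} and~\ref{prop:framingrelationship} to see that $W$ is a rational homology ball with $\rho=1$ and that $\chi$ descends to $H_1(W)$, and then apply Proposition~\ref{prop:Gilmer-Livingstonobst}. The only difference is that you spell out the descent of the character (checking it kills the kernel of $H_1(\partial W)\to H_1(W)$, generated by $[H]-\lambda[\mu_K]$) in more detail than the paper does.
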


\begin{proof}
Let $W$ be the exterior of a topological slice disk for $K$. We saw earlier that the $f$-framed surgery on $S^1\times S^2$ along $K$ is the boundary of $W$, i.e.\ $\partial W=M_f(K)$. From Proposition~\ref{prop:HomologyExterior} we know that $W$ is a rational homology ball. Indeed, we know that $\dim_{\Z/q} H_1(\bdry W;\Z/q)=2$ since $d$ divides $w$. Moreover, since $D^2\times S^2$ embeds in $S^4$, so does $W$. We also know from Propositions~\ref{prop:HomologyExterior} and~\ref{prop:framingrelationship} that the inclusion induced map $H_1(\partial W)\rightarrow H_1(W)$ simply sends $[\mu_K]\mapsto [\mu_K]$ and $[H]\mapsto\lambda\cdot [\mu_K]$ where $f=-2w\lambda$. This implies that the character $\chi$ descends to a character $\chi:H_1(W)\rightarrow \Z/d\rightarrow \CC^\times$. Thus, we see that
\[
|\sigma(M_f(K),\chi)|\leq |\sigma(\bdry W,\chi)|+|\eta(\bdry W,\chi)|\le 1.
\qedhere\]
\end{proof}

\noindent We now have all the ingredients for a proof of Proposition~\ref{prop:nonslice}.
\begin{proof}[Proof of Proposition \ref{prop:nonslice}]
Consider the knot $K_w(J)$ from Figure~\ref{fig:JforNonSlice}, with winding number $w>0$. Suppose that it is slice, for the sake of contradiction. Let $f$ be the framing on $K_w(J)$ induced by the normal bundle of a slice disk. We saw in Proposition~\ref{prop:framingrelationship} that $f=-2w\lambda$ for some integer $\lambda$. Let $d$ be a prime power factor of $w$, and $\zeta=e^{2\pi i\frac{p}{d}}$ be some primitive $d$th root of unity. Let $\chi:H_1(M_f(K))\to \Z/d\to \CC^\times$ be a character
sending $[H]-\lambda \cdot [\mu_{K_w(J)}]\mapsto 1$ and $[\mu_{K_w(J)}]\mapsto \zeta$, where $H$ is a positive meridian of $\alpha$ and $\mu_{K_w(J)}$ is a positive meridian of $K_w(J)$. We use a formula for the Casson-Gordon signature due to
Cimasoni and Florens~\cite[Theorem 6.7]{CimasoniFlorens08}. In the special case of our presentation of $M_f(K_w(J))$ as a surgery along the $2$-component link $K_w(J)\sqcup \alpha$ their formula states that for $p$ and $\lambda$ with $(d,p)=(d,\lambda p)=1$,
\begin{equation}\label{eqn:Casson-Gordon via Cimasoni-Florens}
\sigma(M_f(K_w(J)),\chi) =
\sigma_{K_w(J)\sqcup \alpha}(e^{2\pi i \frac{p}{d}},
e^{2\pi i \frac{\lambda p}{d}})
+F(w, f, d, p, [\lambda p])
\end{equation}
Note that we are only assuming that $(d,\lambda)=1$ (since $(d,p)=1$) for now, and that we may assume that $p\in\{1,\dots, d-1\}$ since $\zeta$ is a root of unity.  Above $\sigma_{K_w(J)\sqcup \alpha}$ is the colored signature of the $2$-colored link $K_w(J)\sqcup \alpha$ defined in \cite{CimasoniFlorens08}. The quantity $[\lambda p]$ is simply $\lambda p\mod d$ and thus, $[\lambda p]\in \{1,\dots, d-1\}$. Finally,~\cite[Theorem 6.7]{CimasoniFlorens08} gives a formula for the function $F$ involving terms from the linking-framing matrix for a surgery presentation of the $3$-manifold. In our case, the formula yields
\begin{eqnarray*}F(w,f,d,n_1,n_2) &=& -w+\frac{2}{d^2}\left( w(dn_1+ dn_2-2n_1n_2)+n_2(d-n_2)f \right)
\\&=&-w+
2w\left(\frac{n_1}{d}+ \frac{n_2}{d}-2\frac{n_1}{d}\frac{n_2}{d}\right)
+
2\frac{n_2}{d}\left(1-\frac{n_2}{d}\right)f,
\end{eqnarray*}
where $n_1=p$ and $n_2=[\lambda p]$.
Rather than trying to understand this expression precisely, we simply bound its contribution as follows.
\begin{lemma}\label{lem:controlF}
Let $a, f,d\in \Z$, $d\neq 0$ and $n_1,n_2\in \{1,\dots, d-1\}$.  Then
\[
-|a|+\min(0,f)< F(a,f,d,n_1,n_2) < |a|+\max(0,f)
\]
\end{lemma}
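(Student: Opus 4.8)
The proof is a direct estimate, so the plan is to isolate the two sources of size in $F$ and bound each. Set $x = n_1/d$ and $y = n_2/d$; the hypothesis $n_1, n_2 \in \{1,\dots,d-1\}$ gives $x, y \in (0,1)$. The first move is the algebraic observation that the bracket multiplying $a$ in the given formula factors,
\[
2\left(x + y - 2xy\right) - 1 = -(1-2x)(1-2y),
\]
so that
\[
F(a,f,d,n_1,n_2) = -\,a\,(1-2x)(1-2y) + 2y(1-y)\,f .
\]

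Next I would bound the two summands separately. Since $x,y\in(0,1)$, both $1-2x$ and $1-2y$ lie in the open interval $(-1,1)$, so $|(1-2x)(1-2y)| < 1$ and hence $-|a| \le -a(1-2x)(1-2y) \le |a|$, with strict inequality whenever $a \neq 0$. For the second summand, the quadratic $y \mapsto 2y(1-y)$ takes values in $(0,\tfrac12]$ on $(0,1)$, so $2y(1-y)f$ always lies weakly between $0$ and $f$; that is, $\min(0,f) \le 2y(1-y)f \le \max(0,f)$. Adding the two estimates gives
\[
-|a| + \min(0,f) < F(a,f,d,n_1,n_2) < |a| + \max(0,f) ,
\]
which is the claim, the strict inequality coming from the $a$-summand (the only relevant case, since $F$ is applied with $a = w \geq 2$).

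There is really no obstacle here; the only two points deserving a moment of care are spotting the factorization $-(1-2x)(1-2y)$ --- without it one is left staring at a bilinear expression in $x,y$ whose sign and size are not transparent --- and keeping track of which of the resulting inequalities are strict versus weak, with strictness traceable to $n_1$ and $n_2$ lying strictly between $0$ and $d$, which forces $|1-2x|, |1-2y| < 1$ rather than merely $\le 1$.
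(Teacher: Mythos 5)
Your proof is correct and follows essentially the same route as the paper's: the paper bounds the coefficient of $a$ by multiplying the inequalities $\lvert\sqrt{2}(n_i/d - \tfrac12)\rvert < \tfrac{\sqrt{2}}{2}$, which is exactly your factorization $-(1-2x)(1-2y)$ in disguise, and bounds the $f$-term by $0 < 2y(1-y) \le 1$ just as you do. The only cosmetic difference is that you handle both signs of $a$ uniformly via $\lvert a\rvert$ where the paper splits into the cases $a>0$ and $a<0$; both arguments need $a \neq 0$ for strictness, which you correctly note is harmless in the application.
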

We postpone the proof of the above to the end of the section.

Applying this bound above to equation~\eqref{eqn:Casson-Gordon via Cimasoni-Florens}, we see that
\begin{equation}\label{eqn:formula}
\sigma_{K_w(J)\sqcup \alpha}(\zeta, \zeta^\lambda)-w+\min(0,f) < \sigma(M_f(K_w(J)),\chi) < \sigma_{K_w(J)\sqcup \alpha}(\zeta, \zeta^\lambda) +w +\max(0,f)
\end{equation}
By an additivity result of Cimasoni-Florens~\cite[Proposition~2.12]{CimasoniFlorens08} we have that
\begin{equation}\label{eqn:additive}\sigma_{K_w(J)\sqcup \alpha}(\zeta,\zeta^\lambda) = \sigma_{K_w(U)\sqcup \alpha}(\zeta,\zeta^\lambda)+\sigma_J(\zeta).
\end{equation}
Next, observe that the link $K_w(U)\sqcup \alpha$ in $S^3$ bounds a connected $C$-complex $X$ with the first Betti number $\beta_1(X)=w-1$ (recall that $w>0$). Then, we know from~\cite{CimasoniFlorens08} that $\sigma_{K_w(U)\sqcup \alpha}(\zeta, \zeta^\lambda)$ is the signature of a $(w-1)\times(w-1)$ matrix and thus, $|\sigma_{K_w(U)\sqcup \alpha}(\zeta, \zeta^\lambda)|\le w-1$. Applying this  and the additivity relation~\eqref{eqn:additive} to the inequality~\eqref{eqn:formula}, we see that
\[
\sigma_J(\zeta)+1-2w+\min(0,f)
<
\sigma(M_f(K_w(J)),\chi)
<
\sigma_J(\zeta)-1+2w+\max(0,f).
\]
Recall that from Corollary~\ref{cor:Casson-Gordon obstruction}, we know that $\lvert \sigma(M_f(K_w(J)),\chi)\rvert \leq 1$. As a result, we see that
\[
\sigma_J(\zeta)+1-2w+\min(0,f)<1
\]
and
\[-1<\sigma_J(\zeta)-1+2w+\max(0,f)\]

Solve for $\min(0,f)$ and $\max(0,f)$ to see that, as long as $(d, \lambda)=1$, for any primitive $d$th root of unity $\zeta$,
\[
-2w-\sigma_J(\zeta)<\max(0,f) \text{ and } \min(0,f)<2w-\sigma_J(\zeta).
\]
In particular, for the two distinct primitive $d$th roots of unity, $\zeta_1$ and $\zeta_2$, from the statement of the proposition, we see that $\min(0,f)<2w-\sigma_J(\zeta_1)<0$ and $0<-2w-\sigma_J(\zeta_2)<\max(0,f)$. Thus, $\min(0,f)< 0$ and so $f<0$. Similarly, $\max(0,f)>0$ and so $f>0$, and we have reached a contradiction.

It still remains to address the case of $(\lambda,d)\neq 1$. In order to use the formula \eqref{eqn:Casson-Gordon via Cimasoni-Florens}, we will apply a homeomorphism to the manifold $M_f(K_w(J))$, as follows.

\begin{figure}[htb]
\begin{tikzpicture}
\node at (0,0) {\includegraphics[width=2.5cm]{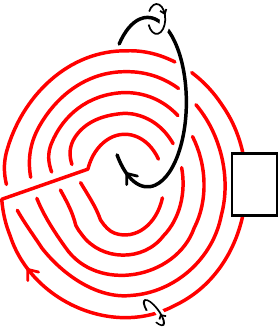}};
\node at (1.05, -0.15) {$J$};
\node at (0.55, 1.05) {$0$};
\node at (-1, -1.3) {\textcolor{red}{$f$}};
\node at (0.3, 1.6) {$H$};
\node at (0.6, -1.65) {$\mu_{K_{w}(J)}$};
\node at (-0.2, -2.2) {$(a)$};
\node at (3,0) {\includegraphics[width=2.5cm]{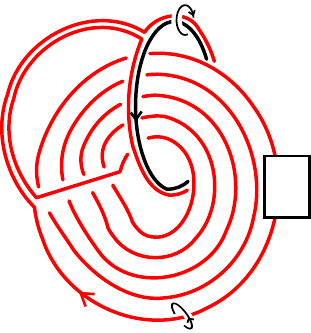}};
\node at (4.05, -0.15) {$J$};
\node at (3.6, 1.05) {$0$};
\node at (2,-1.3) {\textcolor{red}{$f-w$}};
\node at (3.75,-1.2) {$\widetilde{K}$};
\node at (3, -2.2) {$(b)$};
\node at (6,0) {\includegraphics[width=2.5cm]{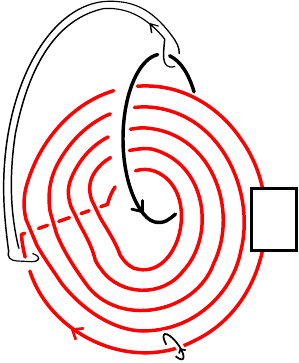}};
\node at (7.06, -0.26) {$J$};
\node at (6.5, 1.1) {$0$};
\node at (4.7,-1.3) {\textcolor{red}{$f-w$}};
\node at (5.9, -2.2) {$(c)$};
\node at (9,0) {\includegraphics[width=2.5cm]{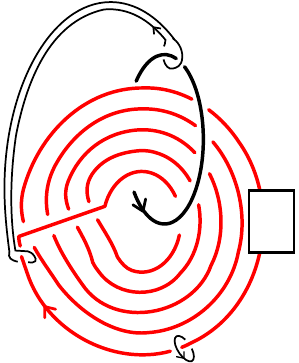}};
\node at (10.06, -0.27) {$\overline{J}$};
\node at (9.55, 1.05) {$0$};
\node at (7.7,-1.4) {\textcolor{red}{$w-f$}};
\node at (9, -2.2) {$(d)$};
\node at (10.25, -1) {$K'(J)$};
\node at (8.85, 1.1) {$\alpha'$};
\end{tikzpicture}
\caption{Left to right:  (a) A surgery diagram for $M_f(K_w(J))$, showing $\mu_K$ and $H$.  (b) A handle slide of $K_w(J)$ over $\alpha$. (c) An isotopy.
(d) The result of the orientation reversing homeomorphism changing every crossing.}
\label{fig:FramingFix}
\end{figure}
We work with the surgery diagram for $M_f(K_w(J))$ in the leftmost panel of Figure~\ref{fig:FramingFix}. Slide $K_w(J)$ over the $0$-framed $2$-handle $\alpha$ as shown in the second panel. The resulting homeomorphism sends the $f$-framing on $K_w(J)$ to the $f-w$ framing on some new knot $\widetilde{K}$ in $S^1\times S^2$ and $M_f(K_w(J))\cong M_{f-w}(\widetilde{K})$. Changing all of the crossings in the diagram gives an orientation reversing homeomorphism from $M_{f-w}(\widetilde{K})$ to a $3$-manifold $M'$. This $3$-manifold is given by surgery on the $2$-component link obtained from $K_w(J)\sqcup \alpha$ by changing all the crossings. In particular, the linking number between the two components is $-w$. We refer to the two components of this link as $K'(\overline{J})$ and $\alpha'$ as shown in Figure~\ref{fig:FramingFix}. Let~$\mu_{K'(\overline{J})}$ and~$H'$  denote the positive meridians of $K'(\overline{J})$ and $\alpha'$ respectively. We just showed that there is an orientation reversing homeomorphism from $M_f(K_w(J))$ to $M'$, such that the induced map on first homology sends $[\mu_{K_{w(J)}}]\mapsto -[\mu_{K'(\overline{J})}]$ and $[H] \mapsto -[H']+[\mu_{K'(\overline{J})}]$. Then the composition of the inverse of this homeomophism with $\chi$, called $\chi'$, sends $[H']\mapsto e^{2\pi i \frac{-p(1+\lambda)}{d}}$ and $[\mu_{K'(\overline{J})}]\mapsto e^{2\pi i \frac{d-p}{d}}$. Orientation reversing homeomorphisms change the  Casson-Gordon invariant by a sign, and thus, $\sigma(M_f(K_w(J),\chi) = -\sigma(M',\chi')$.

Since $d=q^s$ for some prime $q$ and $(d,\lambda)\neq 1$, $q$ must divide $\lambda$. We see that $q$ cannot divide $-p(1+\lambda)$, since then $q$ would divide both $\lambda$ and $\lambda +1$, which leads to a contradiction (note that we have used the fact that $(q,p)=1$). Thus, $(d, -p(1+\lambda))=(d, d-p)=1$ and we can apply the formula of \cite[Theorem 6.7]{CimasoniFlorens08} to see that
\begin{align*}
\sigma(M_f(K_w(J),\chi) &= -\sigma(M',\chi')
\\
&= -\sigma_{K'(\overline{J})\sqcup \alpha'}(e^{2\pi i \frac{d-p}{d}}, e^{2\pi i \frac{-p(1+\lambda)}{d}}) - F(-w,w-f,d,d-p, [-p\cdot(1+\lambda)])
\end{align*}
Above we have used the fact that the linking number for the link $K'(\overline{J})\sqcup \alpha'$ is $-w$. Using the bound from Lemma~\ref{lem:controlF} again, we have that
\[
-w+\min(0,w-f)<F(-w,w-f,d,d-p, [-p\cdot(1+\lambda)]) <w+\max(0,w-f)
\]
Recall that $\zeta=e^{2\pi i\frac{p}{d}}$. Thus, $e^{2\pi i \frac{d-p}{d}}=\overline \zeta$. From formula~\eqref{eqn:additive}, we know that
\begin{align*}
\sigma_{K'(\overline{J})\sqcup\alpha'}(e^{2\pi i \frac{d-p}{d}}, e^{2\pi i \frac{-p(1+\lambda)}{d}})&=\sigma_{K'(U)\sqcup\alpha'}(e^{2\pi i \frac{d-p}{d}}, e^{2\pi i \frac{-p(1+\lambda)}{d}})+\sigma_{\overline J}(e^{2\pi i \frac{d-p}{d}})\\
&=\sigma_{K_w(U)\sqcup \alpha}(e^{2\pi i \frac{d-p}{d}}, e^{2\pi i \frac{-p(1+\lambda)}{d}})+\sigma_{\overline J}(\overline \zeta)
\end{align*}
As before, $K'(U)\sqcup\alpha'$ bounds a $C$-complex $X$ with the first Betti number $\beta_1(X) = w-1$, and thus,
\[ \lvert \sigma_{K'(U)\sqcup \alpha}(e^{2\pi i \frac{d-p}{d}}, e^{2\pi i \frac{p(1-\lambda)}{d}})\rvert \leq w-1 .\] Putting this all together, we see that
\begin{eqnarray*}
-\sigma_{\overline{J}}(\overline\zeta)+1-2w-\max(0,w-f)< \sigma(M_f(K_w(J)),\chi)
\\<-\sigma_{\overline{J}}(\overline\zeta)-1+2w-\min(0,w-f)
\end{eqnarray*}
Signatures change by a negative sign under mirror images, and the signature function is invariant under $\zeta\mapsto \overline\zeta$. Thus, $-\sigma_{\overline{J}}(\overline\zeta) = \sigma_J(\overline\zeta) = \sigma_J(\zeta)$. We may now proceed exactly as we did when $(\lambda,d)=1$. By Corollary~\ref{cor:Casson-Gordon obstruction}, $\lvert \sigma(M_f(K_w(J),\chi)\rvert\leq 1$. Thus, 
\[
\sigma_{J}(\zeta)+1-2w-\max(0,w-f)< \sigma(M_f(K_w(J)),\chi)<1
\]
and
\[
-1<\sigma(M_f(K_w(J),\chi)<\sigma_{J}(\zeta)-1+2w-\min(0,w-f).
\]
For the specific values of $\zeta_1$ and $\zeta_2$ from the statement of the proposition,
\[
\max(0,w-f)>\sigma_J(\zeta_1)-2w>0
\]
and
\[
\min(0,w-f)<\sigma_J(\zeta_2)+2w<0
\]
Since $\max(0,w-f)>0$, $w-f>0$, and since $\min(0,w-f)<0$, $w-f<0$; we have reached a contradiction.
\end{proof}

\begin{proof}[Proof of Lemma~\ref{lem:controlF}]
Suppose for the moment that $a>0$. Since $0<\frac{n_1}{d}<1$ and $0<\frac{n_2}{d}<1$, we see that $-\frac{\sqrt{2}}{2}<\sqrt{2}(\frac{n_1}{d}-\frac{1}{2})<\frac{\sqrt{2}}{2}$ and $-\frac{\sqrt{2}}{2}<\sqrt{2}(\frac{n_2}{d}-\frac{1}{2})<\frac{\sqrt{2}}{2}$. By multiplying these two inequalities together and rearranging we get
\[0<\left(\frac{n_1}{d}+ \frac{n_2}{d}-2\frac{n_1}{d}\frac{n_2}{d}\right)<1
\]
and consequently,
\begin{equation}\label{eq:firsthalf}
0<2a\left(\frac{n_1}{d}+ \frac{n_2}{d}-2\frac{n_1}{d}\frac{n_2}{d}\right)<2a.
\end{equation}
Now note that $0<2\frac{n_2}{d}\left(1-\frac{n_2}{d}\right)$ since it is a product of positive numbers. Moreover, $2\frac{n_2}{d}\left(1-\frac{n_2}{d}\right)<1$, since the function $2x(1-x)-1 = -2\left(x-\frac{1}{2}\right)^2-\frac{1}{2}$ is strictly negative on $(0,1)$. That is, $0<2\frac{n_2}{d}\left(1-\frac{n_2}{d}\right)\leq 1$ and thus,
\begin{equation}\label{eq:secondhalf}
\min(0,f)\leq2\frac{n_2}{d}\left(1-\frac{n_2}{d}\right)f\leq\max(0,f).
\end{equation}
Adding together the inequalities~\eqref{eq:firsthalf} and~\eqref{eq:secondhalf} and subtracting $a$ completes the proof when $a>0$. Now consider the case where $a<0$. As before, we see that \[
0<\left(\frac{n_1}{d}+ \frac{n_2}{d}-2\frac{n_1}{d}\frac{n_2}{d}\right)<1
\]
but since $a$ is negative,
\begin{equation*}\label{eq:firsthalfnegative}
2a<2a\left(\frac{n_1}{d}+ \frac{n_2}{d}-2\frac{n_1}{d}\frac{n_2}{d}\right)<0.
\end{equation*}
Now add the above to the inequality~\eqref{eq:secondhalf} and subtract $a$ from everything to see that
\[
a+\min(0,f)< F(a,f,d,n_1,n_2) < -a+\max(0,f)
\]
where since $a<0$, we have proved that
\[
-|a|+\min(0,f)< F(a,f,d,n_1,n_2) < |a|+\max(0,f)
\]
as needed.
\end{proof}

\section{Smooth vs topological concordance}\label{sec:smooth-vs-top}

In this section we focus on the disparity between the smooth and topological categories for concordance of knots in $S^1\times S^2$, by proving Theorems~\ref{thm:thmBprime} and \ref{thm:thmCprime}. The proofs will be quite similar to those of Theorems~\ref{thm:thmB} and \ref{thm:thmC}, and the main difference will be in the choice of the seed knots $\{J_i\}$ in $S^3$, and the use of smooth invariants to detect non-sliceness, such as Ozsv{\'a}th-Szab{\'o}'s $\tau$ invariant~\cite{OS03}.

\begin{proof}[Proof of Theorem~\ref{thm:thmBprime}]
For $w=0$, let $\{J_i\}$ be an infinite family of topologically slice knots in $S^3$ such that if $i\neq j$ the knots~$J_i\#rJ_i$ and $J_j\#rJ_j$ are not smoothly concordant. For example, let $J_i$ be the connected sum of $i$ copies of the positive clasped untwisted Whitehead double of the right handed trefoil, in which case the $\tau$ invariant detects that $J_i\#rJ_i$ is not smoothly concordant to $J_j\#rJ_j$ for $i \neq j$. We also know that untwisted Whitehead doubles are topologically slice~\cite{Fre82, FQ90, GT04}. Let $K_0(J_i)$ be the winding number zero knot described in Figure~\ref{fig:winding-zero-slice-knots}. Since $J_i$ is topologically slice, each $K_0(J_i)$ is topologically concordant to a distant knot $\Phi(U)$, and thus, the knots $\{K_0(J_i)\}$ are topologically concordant to one another. We saw in the proof of Theorem~\ref{thm:thmB} that $K_0(J_i)$ is slice for any choice of $J_i$. Following the same proof we see that if $K_0(J_i)$ is smoothly concordant to $K_0(J_j)$ in $S^1 \times S^2$ with $i\neq j$  then $J_i\#rJ_i$ is smoothly concordant to $J_j\#rJ_j$ in $S^3$ contradicting our assumption.

Note that it will suffice to prove the theorem for positive winding numbers since examples with negative winding numbers would then be obtained by reversing orientations. For nonzero positive winding numbers, our examples will all be of the form $K_w(J)$, for appropriate choice of knots $J$, as shown in Figure~\ref{fig:many-slice-knots}. We saw in the proof of Theorem~\ref{thm:thmB} that such knots are slice for any choice of $J$.

For $w=2$, let $\{J_i\}$ be an infinite family of topologically slice knots in $S^3$ such that if $i\neq j$ the knots~$(J_i \# J_i)_{2,1}$ and $(J_j \# J_j)_{2,1}$ do not cobound a smooth genus three orientable surface in $S^3 \times I$. For example, let $J_i$ be the connected sum of $i$ copies of the positive Whitehead double of the right handed trefoil, in which case the $\tau$ invariant detects that $(J_i \# J_i)_{2,1}$ and $(J_j \# J_j)_{2,1}$ do not cobound a smooth genus three orientable surface in  $S^3 \times I$. Note that each $K_2(J_i)$ is topologically concordant to the $(2,1)$ cable of the Hopf knot, and thus the knots $\{K_2(J_i)\}$ are topologically concordant to one another. Following the proof of Theorem~\ref{thm:thmB}, we see that if $K_2(J_i)$ are smoothly concordant to $K_2(J_j)$ in $S^1 \times S^2$ then $(J_i \# J_i)_{2,1}$ and  $(J_j \# J_j)_{2,1}$ cobound a smooth genus three orientable surface in $S^3 \times I$, which is a contradiction.

Lastly we address the case~$w>2$. Let $\{J_i\}$ be an infinite family of topologically slice knots in $S^3$ such that for $i\neq j$ the knots~$J_i\#J_i\#(rJ_i)_{2,1}$ and $J_j\#J_j\#(rJ_j)_{2,1}$ do not cobound a smooth genus one orientable surface. Again, one might take $J_i$ to be the connected sum of $i$ copies of the positive Whitehead double of the right handed trefoil, where we use the $\tau$-invariant to verify that $J_i\#J_i\#(rJ_i)_{2,1}$ and $J_j\#J_j\#(rJ_j)_{2,1}$ do not cobound a smooth genus one orientable surface in $S^3 \times I$. Since $J_i$ is topologically slice, $K_w(J_i)$ is topologically concordant to the $(w,1)$ cable of the Hopf knot, and thus the knots $\{K_w(J_i\}$ are topologically concordant to one another. As in the proof of Theorem~\ref{thm:thmB}, if $K_w(J_i)$ and $K_w(J_j)$ are smoothly concordant in $S^1 \times S^2$ then $J_i\#J_i\#(rJ_i)_{2,1}$ and $J_j\#J_j\#(rJ_j)_{2,1}$ cobound a smooth genus one orientable surface in $S^3 \times I$, which is a contradiction.
\end{proof}

\begin{proof}[Proof of Theorem~\ref{thm:thmCprime}] We will use the examples from Proposition~\ref{prop:evencrosscap}. These knots are topologically slice, and moreover are topologically concordant to $\Phi(U)$ when $w=0$ and to the $(2,1)$-cable of $H_{w,1}$ when $w\neq0$. As a result, for a fixed $w$, they are topologically concordant to one another. On the other hand, the knots are not slice since they have non-zero $\gamma_4$. Choose a sub-family of $\{K_i\}$ with distinct smooth non-orientable $4$-ball genera. Since smooth concordance in $S^1\times S^2$ preserves $\gamma_4$, the knots in this subfamily are distinct in smooth concordance.
\end{proof}


\section{An application}

Theorem \ref{thm:thmAprime} implies the result from Cochran-Franklin-Hedden-Horn~\cite{CFHH13} given below. Recall that a \emph{pattern} $P$ is simply a knot in a solid torus $ST$. Given a knot $K$, the \emph{satellite knot} $P(K)$ is obtained by tying $ST$ into the knot $K$ such that the longitude of $ST$ is mapped to the $0$--framed longitude of $K$. The knot $\widetilde{P}$ is the result of applying $P$ to the unknot. The \emph{winding number} of $P$ is the algebraic number of times that $P$ wraps around the longitude of $ST$. Let $M_K$ denote the $0$-framed surgery on a knot $K$. For two knots $K$ and $J$ and $m\neq 0$, we say that $M_K$ is $\Z[1/m]$-homology cobordant to $M_J$ \emph{rel meridians} if $M_K$ and $M_J$ are $\Z[1/m]$-homology cobordant via a $4$-manifold $W$ such that the positively oriented meridians of $K$ and $J$ differ in $H_1(W;\Z[1/m]) $ by a positive unit in $\Z[1/m]$.

\begin{corollary}[{\cite[Theorem 2.1]{CFHH13}}] If $P$ is a pattern with winding number $w> 0$ such that $\widetilde{P}$ bounds a smooth disk in a $\Z\left[\frac{1}{w}\right]$-homology ball, then for any knot $K$, $M_K$ is $\Z\left[\frac{1}{w}\right]$-homology cobordant to $M_{P(K)}$ rel meridians.
\end{corollary}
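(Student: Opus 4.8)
The plan is to realize both $M_K$ and $M_{P(K)}$ as obtained from a pattern-dependent closed $3$-manifold by deleting a neighborhood of a distinguished curve and regluing the exterior $E_K:=S^3\setminus\nu K$ of $K$, and then to interpolate the two pattern-dependent pieces using Proposition~\ref{prop:knots-in-homology-S1xS2}. Let $\eta$ denote the axis of the pattern $P$, i.e.\ the unknotted curve in $S^3$ with $S^3\setminus\nu\eta$ equal to the solid torus $ST$ in which $P$ lives and $\lk(\eta,\widetilde P)=w$. The satellite construction says exactly that $(S^3,P(K))$ is obtained from $(S^3,\widetilde P)$ by deleting $\nu\eta$ and gluing in $E_K$ so that the meridian $\mu_\eta$ is identified with the $0$-framed longitude $\lambda_K$ and the $0$-framed longitude $\lambda_\eta$ with the meridian $\mu_K$. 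Since $\eta$ is disjoint from $\widetilde P$, performing $0$-surgery throughout shows that $M_{P(K)}$ is obtained from $M_{\widetilde P}$ by the same recipe; here $\eta\subset M_{\widetilde P}$ represents $w$ times the generator of $H_1(M_{\widetilde P})\cong\Z$. Taking $P$ to be the core of the solid torus recovers the special case $\widetilde P=U$, $M_{\widetilde P}=S^1\times S^2$, $\eta=H$, and exhibits $M_K$ as obtained from $S^1\times S^2$ by deleting $\nu H$ and gluing in $E_K$ along the same gluing. Under these identifications, the positively oriented meridian of $K$ in $M_K$ is the curve $\lambda_H$, and the positively oriented meridian of $P(K)$ in $M_{P(K)}$ is the curve $\lambda_\eta$.

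Next I would apply Proposition~\ref{prop:knots-in-homology-S1xS2} with $M=M_{\widetilde P}$, $R=\Z[1/w]$, and the knot $\eta$. The hypothesis that $\widetilde P$ bounds a smooth disk $\Delta$ in a $\Z[1/w]$-homology ball $B$ supplies the required $4$-manifold: a Mayer--Vietoris computation for $B=\nu\Delta\cup(B\setminus\nu\Delta)$ shows that $W:=B\setminus\nu\Delta$ has $\bdry W=M_{\widetilde P}$, that $H_1(M_{\widetilde P};\Z[1/w])\to H_1(W;\Z[1/w])$ is an isomorphism onto $\Z[1/w]$ generated by the meridian of $\Delta$, and that $H_2(W;\Z[1/w])=0$. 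Proposition~\ref{prop:knots-in-homology-S1xS2} then produces a $\Z[1/w]$-homology cobordism $Q$ from $S^1\times S^2$ to $M_{\widetilde P}$ together with a properly embedded annulus $A\subset Q$ cobounded by the Hopf knot $H\subset S^1\times S^2$ and $\eta\subset M_{\widetilde P}$ --- this is the cylinder $H\times I$ appearing in the proof of that proposition --- and one arranges $A$ to cobound the distinguished longitudes $\lambda_H$ and $\lambda_\eta$ fixed above. (This is where Theorem~\ref{thm:thmAprime} really enters, since Proposition~\ref{prop:knots-in-homology-S1xS2} is precisely its extension to knots in $3$-manifolds bounding homology $S^1\times D^3$'s.)

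Finally I would transplant $K$ into the cobordism by setting
\[
V\ :=\ \bigl(Q\setminus\nu A\bigr)\ \cup_{T^2\times I}\ \bigl(E_K\times I\bigr),
\]
gluing $\bdry E_K\times I=T^2\times I$ to the vertical boundary of $\nu A$ slicewise by $\mu_\eta\leftrightarrow\lambda_K$ and $\lambda_\eta\leftrightarrow\mu_K$. By the first paragraph, $\bdry^-V=M_K$ and $\bdry^+V=M_{P(K)}$. That $V$ is a $\Z[1/w]$-homology cobordism follows from a Mayer--Vietoris argument: $E_K$ is a $\Z$-homology (hence $\Z[1/w]$-homology) $S^1\times D^2$ and $\nu A\cong (S^1\times I)\times D^2$ has the $\Z[1/w]$-homology of $S^1$, and under the gluing the inclusions of $T^2\times I$ into the two pieces induce the same map on $\Z[1/w]$-homology (both kill the meridian direction of $A$ and send the core direction to a generator), so replacing $\nu A$ by $E_K\times I$ leaves the $\Z[1/w]$-homology type unchanged and $V$ inherits the homology-cobordism property of $Q$. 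It is a homology cobordism rel meridians essentially for free: the annulus $\mu_K\times I\subset E_K\times I$ is cobounded by the two copies of $\mu_K$ in $\bdry^\pm(E_K\times I)$, which under the gluing are exactly $\lambda_H\subset M_K$ and $\lambda_\eta\subset M_{P(K)}$, i.e.\ the positively oriented meridians of $K$ and of $P(K)$; hence these two meridians are homologous in $V$, and in particular differ by the positive unit $1\in\Z[1/w]$. This proves the corollary.

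The main obstacle is the framing bookkeeping running through the first two paragraphs. Because a knot of nonzero winding number in a $\Z[1/w]$-homology $S^1\times S^2$ has no canonical $0$-framing --- the difficulty confronted in Proposition~\ref{prop:framingrelationship} --- one must fix compatible longitudes $\lambda_\eta$ and $\lambda_H$ once and for all and track them through the satellite description, the $0$-surgery, and the annulus $A$ produced by Proposition~\ref{prop:knots-in-homology-S1xS2}, checking that the various torus gluings genuinely reproduce $M_K$ and $M_{P(K)}$ with the stated meridians (in particular that $A$ can be chosen with the correct framing on both ends, possibly after adding twists). Once the longitudes are pinned down consistently, the homological verifications above are routine.
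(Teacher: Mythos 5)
Your proposal follows essentially the same route as the paper's proof: decompose $M_{P(K)}$ and $M_K$ by cutting out the axis/Hopf solid torus and gluing in $E_K$, apply Proposition~\ref{prop:knots-in-homology-S1xS2} to the axis curve in $M_{\widetilde P}$ (whose class is $w$ times the generator, and where the slice disk exterior in the $\Z[1/w]$-homology ball supplies the required four-manifold), and then replace a tubular neighbourhood of the resulting annulus by $E_K\times I$. Your Mayer--Vietoris justification and your flagging of the framing bookkeeping are more explicit than the paper's, which is fine.

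There is, however, one genuine slip in your rel-meridians verification. The curve $\lambda_\eta\subset\bdry^+(E_K\times I)\subset M_{P(K)}$ is \emph{not} the positively oriented meridian of $P(K)$: it is the ``fat'' meridian of the companion $K$, i.e.\ the boundary of a meridian disk of $\nu K$, and that disk meets $P(K)$ in $w$ points algebraically. Hence $[\lambda_\eta]=w\,[\mu_{P(K)}]$ in $H_1(M_{P(K)})$, and your annulus $\mu_K\times I$ shows that $[\mu_K]=w\,[\mu_{P(K)}]$ in $H_1(V;\Z[1/w])$, so the two meridians differ by the positive unit $w$, not by $1$. This is precisely the point of the paper's final sentence, and it is the reason the definition of ``rel meridians'' only asks for agreement up to a positive unit of $\Z[1/w]$ rather than equality. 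The conclusion of the corollary is unaffected, but as written your identification of $\lambda_\eta$ with $\mu_{P(K)}$ is incorrect and should be replaced by the homological relation above.
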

\begin{proof}
Let $ST$ denote the solid torus containing $P$, and let $\alpha$ be the meridian of $ST$. The zero framed surgery on $P(K)$ can be built by gluing the result of zero framed surgery on $P$ in $ST$ to $S^3-K$ so that $\alpha$ is identified with the meridian of $K$ and the longitude of $K$ is identified with the longitude of $ST$.

Since $\widetilde{P}$ bounds a smooth disk in a $\Z\left[\frac{1}{w}\right]$-homology ball, $M_{\widetilde{P}}$ is a $\Z\left[\frac{1}{w}\right]$-homology $S^1\times S^2$ which bounds a $\Z\left[\frac{1}{w}\right]$-homology $S^1\times D^3$.  Thus, by Proposition~\ref{prop:knots-in-homology-S1xS2}, there is a $\Z\left[\frac{1}{w}\right]$-homology cobordism $W$ from $M_{\widetilde{P}}$ to $S^1\times S^2$ in which $\alpha$ is concordant to the Hopf knot. Let $C$ be such a concordance. Cut out a neighborhood of $C$ from $W$ and glue in $S^3-K \times I$ so that we obtain a $\Z\left[\frac{1}{w}\right]$-homology cobordism from $M_{P(K)}$ to $M_K$. Note that the `fat' meridian of $K$ in $M_{P(K)}$ (i.e.\ the image of $\alpha$) is concordant in the $4$-manifold to the meridian of $K$ in $M_K$. In $M_{P(K)}$, $\alpha$ is homologous to $m$ times the meridian of $P(K)$.
\end{proof}

\bibliographystyle{alpha}
\bibliography{bib}

\end{document}